\numberwithin{equation}{section}
\numberwithin{theorem}{section}
\numberwithin{table}{section}
\numberwithin{figure}{section}
\definecolor{amethyst}{rgb}{1, 0, 1}
\definecolor{blue-violet}{rgb}{0.54, 0.17, 0.89}
\definecolor{brightturquoise}{rgb}{0.03, 0.91, 0.87}
\definecolor{darkgreen}{rgb}{0.08,0.4,0.1}
\newcommand{\conj}{{\star}}
\DeclareMathOperator*{\argmin}{\mathrm{argmin}}
\DeclareMathOperator*{\argmax}{\mathrm{argmax}}
\DeclarePairedDelimiterX{\ip}[2]{\langle}{\rangle}{#1,#2}
\newcommand{\T}{^T\!}
\newcommand\textt[1]{\hbox{\quad#1\quad}}
\newtheorem{example}[theorem]{Example}
\newcommand{\xbar}{\overline{x}}
\newcommand{\ybar}{\overline{y}}
\newcommand{\R}{\mathbb{R}}
\newcommand{\Rbar}{\overline{\mathbb{R}}}
\newcommand{\Rn}{{\R}^n}
\DeclareMathOperator{\ri}{\mathrm{ri}}
\DeclareMathOperator{\dom}{\mathrm{dom}}
\DeclareMathOperator{\inter}{\mathrm{int}}
\DeclareMathOperator{\cl}{\mathrm{cl}}
\DeclareMathOperator{\val}{\mathrm{val}}
\DeclareMathOperator{\range}{\mathrm{range}}
\DeclareMathOperator{\cone}{\mathrm{cone}}
\DeclareMathOperator{\epi}{\mathrm{epi}}
\DeclareMathOperator{\ncone}{\Nscr}
\newcommand{\gam}{\gamma}
\newcommand{\conv}{{\mbox{conv}\,}}
\newcommand{\sign}{\mathrm{sign}}
\newcommand{\gauge}[1]{\gamma_{\mbox{\scriptsize$#1$}}}
\newcommand{\sd}{\partial}
\DeclareMathOperator{\prox}{\mathrm{prox}}
\DeclareMathOperator*{\minim}{\mathrm{minimize}}
\newcommand{\minimize}[1]{\displaystyle\minim_{#1}}
\DeclareMathOperator*{\maxim}{\mathrm{maximize}}
\newcommand{\maximize}[1]{\displaystyle\maxim_{#1}}
\DeclareMathOperator{\st}{\mathrm{subject\ to}}
\newcommand{\pp}{{\sharp}}
\newcommand{\Cscr}{\mathcal{C}}
\newcommand{\Fscr}{\mathcal{F}}
\newcommand{\Hscr}{\mathcal{H}}
\newcommand{\Nscr}{\mathcal{N}}
\newcommand{\Qscr}{\mathcal{Q}}
\newcommand{\Uscr}{\mathcal{U}}
\newcommand{\lagrange}{(L_p)}
\renewcommand{\L}{_{\scriptscriptstyle L}}
\newcommand{\half}{{\textstyle{\frac{1}{2}}}}
\newcommand\fullwidthdisplay{\displayindent0pt \displaywidth\columnwidth}
  \everydisplay\expandafter{\expandafter\fullwidthdisplay\the\everydisplay}
\title{Foundations of gauge and perspective duality\footnote{J\lowercase{une 18, 2018}}}
\author{A.Y. Aravkin\thanks{Department of Applied Mathematics, University of
    Washington, Seattle (\url{sasha.aravkin@gmail.com}). Research supported by the
    Washington Research Foundation Data Science Professorship.} \and J.V.
  Burke\thanks{Seattle, WA
    (\url{jvburke01@gmail.com}). Research supported in part by NSF award
    DMS-1514559.} \and D. Drusvyatskiy\thanks{Department of Mathematics,
    University of Washington, Seattle (\url{ddrusv@uw.edu}; \url{kmacphee@uw.edu}). Research partially
    supported by AFOSR YIP award FA9550-15-1-0237.} \and M.P.
  Friedlander\thanks{ Departments of Computer Science and Mathematics,
    University of British Columbia, Vancouver, BC, Canada (\url{mpf@cs.ubc.ca}).
    Research supported by ONR award N00014-16-1-2242.} \and \mbox{K.J.
  MacPhee$^{\S}$}}
\begin{document}
\maketitle
\thispagestyle{plain}
\pagestyle{myheadings}

\begin{abstract}
  We revisit the foundations of gauge duality and demonstrate that it
  can be explained using a modern approach to duality based on a
  perturbation framework. We therefore put gauge duality and
  Fenchel-Rockafellar duality on equal footing, including explaining gauge dual
  variables as sensitivity measures, and showing how to recover primal
  solutions from those of the gauge dual. This vantage point allows a
  direct proof that optimal solutions of the Fenchel-Rockafellar dual
  of the gauge dual are precisely the primal solutions rescaled by the
  optimal value. We extend the gauge duality framework
  to the setting in which the functional components are general
  nonnegative convex functions, including problems with piecewise
  linear quadratic functions and constraints that arise from
  generalized linear models used in regression.
\end{abstract}

\begin{keywords}
  convex optimization, gauge duality, nonsmooth optimization, perspective function
\end{keywords}

\begin{AMS}
  90C15, 90C25
\end{AMS}

\section{Introduction}\label{sec:intro}

Sensitivity of the optimal values and solutions of optimization problems, 
with respect to perturbations in the problem data, is a central concern of 
Fenchel-Rockafellar duality theory. Lagrange duality can be regarded as a special
case of this theory, in which perturbations to the data are introduced
in a particular manner. Gauge duality, on the other hand, as introduced in 1987 by
Freund~\cite{freund}, was developed without any
reference to sensitivity. It relies instead on a special polarity
correspondence that exists for nonnegative, positively homogeneous
convex functions that vanish at the origin; these are known as \emph{gauge
  functions}. In 2014, Friedlander, Mac\^edo, and Pong
\cite{gaugepaper} made partial progress towards connecting gauge and Lagrange dualities. In the present work, we show that gauge
duality may be regarded as a particular application of
Fenchel-Rockafellar duality theory that is different than the one
required for Lagrange duality. This connection provides a useful
vantage point from which to develop new algorithms for an important
class of convex optimization problems. We also describe how
gauge duality theory can be extended beyond the optimization of gauge
functions to the optimization of all convex functions that are bounded
below. We call this extension \emph{perspective duality}.

A convenient and fully general formulation for our approach is the
problem
\begin{alignat}{4}
	\label{eq:gauge-primal}
	&\minimize{x}  &\quad &\kappa(x)
	&\quad&\st\quad & &\rho(b-Ax) \le \sigma,
	\tag{\hbox{G$_p$}}
\end{alignat}
where $A\colon\R^n\to\R^m$ is a linear map, $b$ is an $m$-vector, and
$\kappa$ and $\rho$ are closed gauge functions. For many applications,
the function $\kappa$ is used to regularize the problem in order to
obtain solutions with certain desirable properties.  For example, in
statistical and machine-learning applications the regularizer $\kappa$
is often a nonsmooth, structure-inducing function; e.g.  the $1$-norm,
which is frequently used to encourage sparsity in the solution.  The
function $\rho$ may be regarded as a penalty function, such as the
2-norm, that measures the degree of misfit between the data $b$ and
the linear model $Ax$, and may reflect a statistical model of the
noise in the data $b$.  The perspective duality extension enables us
to consider optimization problems with a wider range of applications
by allowing functions $\kappa$ and $\rho$ that are not positively
homogenous, including the Huber function used for robust
regression~\cite{Hub}, the elastic net used for group
detection~\cite{elas_net}, and the logistic loss used for
classification~\cite{nelder1972generalized,aravkin2016level}.

The formulation \cref{eq:gauge-primal} gives rise to two different ``dual'' problems:
\begin{alignat}{4}
  \label{eq:lagrange-dual}
  &\maximize{y}  &\enspace & \ip{b}{y}   - \sigma \rho^\circ (y)
  &\enspace&\st\enspace
  & &\kappa^\circ(A\T y)\le1,
  \qquad \text{and}
  \tag{\hbox{L$_d$}}
\\\label{eq:gauge-dual}
  &\minimize{y}  &\enspace &\kappa^\circ(A\T y)
  &\enspace&\st\enspace
  & &\ip{b}{y}   - \sigma \rho^{\circ} (y) \ge 1.
  \tag{\hbox{G$_d$}}
\end{alignat}
Here $\rho^{\circ}$ and $\kappa^\circ$ are the polars of $\rho$ and
$\kappa$, which are also gauge functions; see \cref{sect: prelim} for
a precise definition. In the important case $\sigma =0$, we interpret
$\sigma \rho^{\circ}$ as the indicator function of the closure of the
domain of $\rho^\circ$ (see the discussion in \cref{sect:
  assumptions}). The first problem \cref{eq:lagrange-dual} is the
standard Lagrangian (or Fenchel-Rockafellar) dual, which is the dual
problem typically considered in connection with convex optimization
problems. Strong duality, reflected in the equality
\[
  \val\cref{eq:gauge-primal}=\val\cref{eq:lagrange-dual},
\]
and in the attainment of the optimal value of the Lagrange primal-dual
pair, holds under mild interiority conditions often referred to as the
Slater constraint qualification.  The second problem
\cref{eq:gauge-dual} is the gauge dual and is less well-known. Under
interiority conditions similar to those required by Lagrange duality,
strong duality holds in the gauge duality setting; this is reflected in the analogous equality
\[
  1=\val\cref{eq:gauge-primal}\cdot \val\cref{eq:gauge-dual},
\]
and in the attainment of the optimal value of the gauge primal-dual pair.

In certain contexts, the gauge dual \cref{eq:gauge-dual} can be
preferable for computation to the the primal \cref{eq:gauge-primal}
and the Lagrangian dual \cref{eq:lagrange-dual}, particularly when the
polar $\kappa^\circ$ has a special structure. Friedlander and
Mac\^edo~\cite{FriedlanderMacedo:2016}, for example, use gauge duality
to derive an effective algorithm for an important class of low-rank
spectral optimization problems that arise in signal-recovery
applications, including phase recovery and blind deconvolution.
Indeed, the effectiveness of numerous convex optimization
algorithms---particularly first-order methods---relies on being able to
project easily onto the constraint set. The appearance of the linear map
$A$ in the constraints of both \cref{eq:gauge-primal} and
\cref{eq:lagrange-dual} means that such methods may not be efficient, 
though some recent methods have been proposed that 
circumvent this difficulty~\cite{shefi2016dual}.  In contrast, the
map $A$ appears in the gauge dual \cref{eq:gauge-dual} only in the
objective, and computing subgradients of this objective only
requires subgradients of $\kappa^\circ$, together with the ability
to efficiently implement matrix-vector multiplication. Moreover,
typical applications occur in the regime $m\ll n$. For example, $m$ is
often logarithmic in $n$ \cite{CRT,robust_rec,tropp,don}. Because the
dual variables $y$ of \cref{eq:gauge-dual} lie in the much smaller
space $\R^m$, projections onto the feasible region may be computed
efficiently, depending on the context. An example of how an interior 
method may be used for this purpose is given in \cref{sec:glms}.

\subsection{Approach}

This paper has two main goals. The first goal, addressed in \cref{sect: gauge_sensitivity}, 
is to show how the foundations of gauge duality can be derived via a perturbation framework 
pioneered by Rockafellar\cite{rockafellar1974,rockafellar}, in which the optimal
value and optimal solution depend on parameters to the problem. We
follow Rockafellar and Wets~\cite[11.H]{rockafellarwets}, who consider
an arbitrary convex perturbation function $F$ on $\R^n\times\R^m$ that
determines how the parameters enter the problem, and define the value
functions
\begin{equation} \label{eq:general-value-functions}
p(u):=\inf_x\, F(x,u)\quad
\textt{and}\quad
q(v):=\inf_y\, F^\star(v,y).
\end{equation}
This set-up immediately yields the primal-dual pair
\begin{equation} \label{eq:fenchel-dual-pair}
  p(0)=\inf_x\, F(x,0)\quad
\textt{and}\quad p^{\star\star}(0)=\sup_y\, -F^{\star}(0,y) \equiv -q(0).
\end{equation}
Fenchel-Rockafellar duality theory flows from an appropriate choice of
$F$.  We show that gauge duality fits equally well into this framework
under a judicious choice of the perturbation function $F$, thereby
putting Fenchel-Rockafellar and gauge duality theories on an equal
footing. Strong duality, primal-dual optimality conditions, and an
interpretation of the gauge dual solutions as sensitivity
measures---i.e., subgradients of the value function---quickly follow;
cf.~\cref{seubsec:deriv_gauge}. These results, in particular, answer
an open question posed by Freund in his original work \cite{freund},
which asked for an interpretation of gauge dual variables for problems
with nonlinear constraints. It also completes a partial analysis by
Friedlander et al.~\cite{gaugepaper} on the interpretation of gauge
dual variables as sensitivity measures.

This viewpoint allows us to prove a striking relationship between
optimal solutions of the primal and optimal solutions of the
Lagrangian dual of the gauge dual: the two coincide up to scaling by
the optimal value (\cref{sect: gauge_fenchel}). Consequently,
Lagrangian primal-dual methods applied to the gauge dual can be used
to recover solutions of the original primal problem. We illustrate
this idea  in \cref{sect:numerical} with an application of Chambolle and Pock's 
primal-dual algorithm~\cite{cp} to a specific problem instance.

The second goal of this paper is to extend the applicability of the
gauge duality paradigm beyond gauges to capture more general convex
problems. \Cref{sec:perspective-duality} extends gauge duality to
problems involving convex functions that are merely nonnegative,
and by an appropriate translation, functions that are bounded from
below. The approach is based on using the perspective transform of a
convex function~\cite[p.~35]{rockafellar}, which increases a function's
domain from $\R^n$ to $\R^{n+1}$ and makes it positively homogeneous, 
enabling the property that is key to the application of
gauge duality. We term the resulting dual problem the {\em perspective
  dual}.
The perspective-polar transformation, needed to derive the perspective
dual problem, is developed in \cref{sec:perspective-duality}. Concrete 
illustrations of perspective duality for the
family of piecewise linear-quadratic functions, which are often used in
data-fitting applications, and for the setting of generalized linear models, are given in \cref{sect: casestudies}. We further explore examples of optimality conditions and primal-from-dual recovery in \cref{sect: recovery_ex}. Numerical illustrations for a
case-study of perspective duals comprise
\cref{sect:numerical}.

\section{Notation and assumptions} \label{sect: bground}

The derivation of our results relies on standard notions from convex
analysis. Unless otherwise specified, we generally follow
Rockafellar~\cite{rockafellar} for standard definitions and notation,
including domains and epigraphs, relative interiors, convex conjugate
functions, subdifferentials, polar sets, etc. In this section we
collect less well-known definitions and notation used throughout the
paper, and establish blanket assumptions on the problem data.

Let $\Rbar:=\R\cup\{ + \infty\}$ denote the extended real line, and
$\Rbar_+ := \{ x \in \Rbar\,  | x \ge 0\}$ denote the nonnegative extended reals. 
Let $f\colon \Rn \to\Rbar$ and $g \colon \R^m \to \Rbar$ denote general
closed convex functions. For a closed convex set $\Cscr \subseteq \Rn$, its convex
indicator $\delta_{\Cscr}$ is the closed convex function whose value
is zero on $\Cscr$ and $+\infty$ otherwise.  Let
$\cone\, \Cscr:= \set{ \lambda x | \lambda \ge 0,\, x \in \Cscr }$
denote the cone generated by
$\Cscr$. We often abbreviate fractions such as $(1/(2\mu))$ to $(1/2\mu)$.

\subsection{The perspective transform} \label{sect: prelim}

For any convex function $f:\R^n\to\Rbar$, its {\em perspective} is the
function on $\R^{n+1}$ whose epigraph is the cone generated by the set
$(\epi f)\times \{1\}$. %
Because this transform is not necessarily closed---even when $f$ is
closed---we choose to work with its closure, and redefine the transform as
\begin{equation}\label{def:perspective2}
  f^{\pi} (x, \lambda) :=
  \begin{cases}
    \lambda f(\lambda^{-1} x) & \hbox{if $\lambda>0$}
  \\f^{\infty} (x)            & \hbox{if $\lambda=0$}
  \\+\infty                   & \hbox{if $\lambda<0$,}
  \end{cases}
\end{equation}
where $f^{\infty} (x)$ is the {\it recession function} of $f$
\cite[Theorem 8.5]{rockafellar}.  A calculus for the perspective
transform $f\mapsto f^\pi$ is described by Aravkin, Burke, and
Friedlander~\cite[Section 3.3]{aravkin2013variational} and, for the
infinite-dimensional case, by
Combettes~\cite{Combettes2016,Combettes2017}, where properties of the
perspective transform are described in detail. We often apply more
than one transformation to a function, and in such cases, the multiple
transformations are applied in the order that they appear; e.g.,
$ f^{\pi\circ} := (f^\pi)^\circ$.

\subsection{Gauge functions}
The following is only a brief description of gauge functions. 
A complete description is given by Rockafellar~\cite[Section 15]{rockafellar}.

A convex function $\kappa: \Rn \to \Rbar$ is called a {\em gauge} if
it is nonnegative, positively homogeneous, and vanishes at the origin.
The symbols $\kappa \colon \Rn \to\Rbar$ and
$\rho \colon \R^m \to \Rbar$ will always denote closed gauges.  The
polar of a gauge $\kappa$ is the function $\kappa^\circ$ defined by
\begin{equation} \label{eq:polar-gauge-def}
 \kappa^\circ(y):= \inf\set{\mu>0 | \ip{x}{y} \le \mu \kappa(x),\ \forall x},
\end{equation}
which is also a gauge and satisfies $\kappa^{\circ\circ}=\kappa$ when $\kappa$ is closed
\cite[Theorem 15.1]{rockafellar}. For example, if $\kappa$ is a norm
then $\kappa^\circ$ is the corresponding dual norm. Note the identity
\begin{equation} \label{eq:epi-polar}
\epi\kappa^{\circ}=\set{(y,-\lambda)| (y,\lambda)\in (\epi \kappa)^{\circ}}.
\end{equation}
It follows directly from~\eqref{eq:polar-gauge-def} and positive
homogeneity of a gauge function that its polar can be characterized as
the support function to the unit level set, i.e.,
\begin{equation}\label{eq:polar-sup}
  \kappa^\circ = \delta^*_{ \Uscr_{\kappa}} = \sup \set{ \ip{u}{\cdot} | u \in \Uscr_\kappa }
\textt{where} \Uscr_{\kappa}:=\set{u | \kappa(u)\leq 1}.
\end{equation}
Moreover, $\kappa$ and $\kappa^\circ$ satisfy a H\"older-like
inequality
\begin{equation} \label{eq:holder-ineq}
  \ip x y \le \kappa(x)\cdot \kappa^\circ(y)
  \quad
  \forall x\in\dom\kappa,\ \forall y\in\dom\kappa^\circ,
\end{equation}
which we refer to as the {\em polar-gauge inequality}.
The zero level set
\[
  \Hscr_{\kappa}:=\set{u | \kappa(u)= 0}
\]
plays a key role when $\sigma =0$. It is straightforward
to show that
\begin{equation}\label{eq:unit and zero}
  \Uscr_{\kappa}^\circ=\Uscr_{\kappa^\circ}\,, \quad \Uscr_{\kappa}^\infty=\Hscr_{\kappa}\,,\quad
  (\dom \kappa)^\circ=\Hscr_{\kappa^\circ}\,,
  \quad\mbox{and}\quad
  \Hscr_{\kappa}^\circ=\cl \dom \kappa^\circ
\end{equation}
whenever $\kappa$ is closed, where $\Uscr_{\kappa}^\infty$ is the {\it
  recession cone} for $\Uscr_{\kappa}$ \cite[Section 8]{rockafellar}. We include proofs of \eqref{eq:unit and zero} in \cref{sec:sigma0_facts}.

\subsection{Assumptions on the feasible region} \label{sect: assumptions}

Define the following primal and dual feasible sets:
\begin{equation}\label{gauge feasible sets}
 \Fscr_p:=\set{u|\rho(b-u) \le \sigma} \qquad \hbox{and}\qquad
 \Fscr_d:=\set{y|\ip b y -\sigma\rho^{\circ}(y)\geq 1 }.
\end{equation}
The nonnegativity of $\rho$ implies that the Slater condition can fail
when $\sigma=0$, and thus special attention is required.  In this
case, we make the replacement
\begin{equation}\label{eq:replacement}
  (\rho,\,\sigma)\ \Rightarrow\ (\delta_{\Hscr_\rho},\,1) \textt{whenever} \sigma=0.
\end{equation}
This replacement yields a gauge optimization problem whose solution
set and optimal value coincide with those of~\eqref{eq:gauge-primal}.
Observe that because $\Hscr_\rho$ is a closed convex cone,
$\delta_{\Hscr_\rho}=\delta_{\Hscr_\rho^\circ}^*$ is a closed gauge
that satisfies, by virtue of \eqref{eq:unit and zero},
$\delta_{\Hscr_\rho}^\circ=\delta_{\Hscr_\rho^\circ}=\delta_{\cl\dom\rho^\circ}$. This
motivates the convention made immediately
following~\eqref{eq:gauge-dual} that
\begin{equation}\label{eq:convention-sigma-zero}
  \sigma \rho^{\circ} :=     \delta_{\cl\dom\rho^\circ}
                      \equiv \delta_{\Hscr_\rho^\circ}
                      \textt{when} \sigma=0.
\end{equation}
The replacement~\eqref{eq:replacement} allows us to make the useful
assumption that $\sigma>\inf \rho$, which significantly streamlines
our analysis. The convention \eqref{eq:convention-sigma-zero} also
makes sense from an epigraphical perspective, because the functions
$\sigma \rho^{\circ}$ epigraphically converge to
$\delta_{\cl\dom\rho^\circ}$ as $\sigma\downarrow 0$ \cite[Proposition 7.4(c)]{rockafellarwets}.

The gauge primal~\eqref{eq:gauge-primal} and
dual~\eqref{eq:gauge-dual} problems are said to be {\em feasible},
respectively, if the following intersections are nonempty:
\[
 A^{-1} \Fscr_p \cap(\dom \kappa) \textt{and} A\T \Fscr_d\cap ( \dom \kappa^\circ).
\]
Similarly, the primal and dual problems are said to be {\em relatively strictly
  feasible}, respectively, if the following intersections are
nonempty:
\[
  A^{-1} \left( \ri \Fscr_p \right) \cap (\ri\dom\kappa) \textt{and} A\T\ri \Fscr_d \cap (\ri \dom\kappa^\circ).
\]
If the intersections above are nonempty, with interior replacing relative interior, then we say that the problems are {\em strictly feasible}.
We have
\begin{align*}
  \ri \Fscr_p &= \begin{cases}
  \set{u | b-u \in \ri\dom \rho, \, \rho(b-u)< \sigma}  &\mbox{if $\sigma >0$}\\
   \set{u | b-u \in \ri \Hscr_{\rho}}  & \mbox{if $\sigma =0$,}
   \end{cases}
  \\
  \ri \Fscr_d &= \begin{cases}
  \set{y| y \in \ri\dom \rho^\circ, \, \ip b y -\sigma\rho^{\circ}(y) > 1 }
    &\mbox{if $\sigma >0$}\\
\set{y| y \in \ri \Hscr_{\rho}^\circ, \, \ip b y  > 1 }
    &\mbox{if $\sigma =0$,}
    \end{cases}
\end{align*}
which follows from Rockafellar~\cite[Theorem 7.6]{rockafellar} when
$\sigma>0$, and from the convention~\eqref{eq:convention-sigma-zero}
when $\sigma=0$.

We assume throughout that $\rho(b) > \sigma$. Otherwise, $\Fscr_p$
contains the origin, which is a trivial solution of
\eqref{eq:gauge-primal}. This assumption is consistent with classical
applications in signal processing and machine learning, where the
corresponding assumption is that the data $b$ does not entirely consist
of noise.
\section{Perturbation analysis for gauge duality} \label{sect: gauge_sensitivity}

Modern treatment of duality in convex optimization is based on an
interpretation of multipliers as giving sensitivity information
relative to perturbations in the problem data. No such analysis,
however, has existed for gauge duality. In this section we show that
for a particular kind of perturbation, the gauge dual
\eqref{eq:gauge-dual} can in fact be derived via such an
approach.

\subsection{General perturbation framework}

Our analysis is based on a perturbation theory described by
Rockafellar and Wets~\cite[11.H]{rockafellarwets}. In this section we
summarize the main results from \cite{rockafellarwets} that we need. Fix an arbitrary convex
function $F\colon\R^n\times\R^m\to\overline\R$, and consider the value
functions defined
by~\eqref{eq:general-value-functions}--\eqref{eq:fenchel-dual-pair}.
Observe the equality $q(0)= -p^{\star\star}(0)$.
For example, Fenchel-Rockafellar duality for the problem
\begin{equation} \label{eq:9}
  \minimize{x} \ f(Ax) + g(x),
\end{equation}
is obtained from the general perturbation theory by setting
$F(x,u)=f(Ax+u)+g(x)$. In that case, the primal-dual pair takes the
familiar form
\[
  p(0)=\inf_x \Set{f(Ax)+g(x)^{\vphantom T}}
  \textt{and}
  p^{\star\star}(0)=\sup_y \Set{-f^{\star}(-y)-g^\star(A\T y)}.
\]
Under certain conditions, described in the following theorem,
strong duality holds, i.e. $p(0)=p^{\star\star}(0)$, and the optimal values are attained.

\begin{theorem}[Multipliers and sensitivity {{\cite[Theorem 11.39]{rockafellarwets}}}]
  \label{thm:gen_dual_frame}
  Consider the primal-dual pair~\eqref{eq:fenchel-dual-pair}, where
  $F\colon\R^n\times\R^m\to\overline\R$ is proper, closed, and convex.
\begin{enumerate}[{\rm (a)}]
\item \label{it:pq_ineq} The inequality $p(0)\geq -q(0)$ always holds.
\item  \label{it:pq_eq}
If $0\in \ri\dom p$, then equality $p(0)=-q(0)$
holds and, if finite, the infimum $q(0)$ is attained with
$\partial p(0)=\argmax_y -F^{\star}(0,y)$.
Similarly, if $0\in \ri\dom q$, then equality
$p(0)=-q(0)$ holds and, if finite, the infimum $p(0)$ is attained with
$\partial q(0)=\argmin_x F(x,0)$.
\item The set $\argmax_y -F^{\star}(0,y)$ is nonempty and bounded if
  and only if $p(0)$ is finite and $0\in \inter\dom p$.
\item The set $\argmin_x F(x,0)$ is nonempty and bounded if and only
  if $q(0)$ is finite and $0\in \inter\dom q$.
\item Optimal solutions are characterized jointly through the
  conditions
  \[
    \left.
      \begin{array}{@{}l@{}}
        \bar{x}\in \argmin_x\, F(x,0)\\
        \bar{y}\in \argmax_y\, -F^{\star}(0,y)\\
        F(\bar x,0)=-F^{\star}(0,\bar y)
      \end{array}
    \right\}
    \quad\Longleftrightarrow\quad
    (0,\bar y)\in \partial F(\bar x,0)
    \quad\Longleftrightarrow\quad (\bar x,0)\in \partial F^{\star}(0,\bar y).
\]
\end{enumerate}
\end{theorem}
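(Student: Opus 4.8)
The plan is to reduce the entire statement to standard facts about a single closed proper convex function and its subdifferential, by first recording the two conjugacy identities that tie $p$ and $q$ to $F$. Computing the conjugate of the infimal projection, one finds $p^\star(y)=\sup_{x,u}\{\langle y,u\rangle-F(x,u)\}=F^\star(0,y)$, and symmetrically, using $F^{\star\star}=F$ (valid since $F$ is closed, proper, and convex), $q^\star(x)=F(x,0)$. Thus $p^{\star\star}(0)=\sup_y\{-F^\star(0,y)\}=-q(0)$ and $q^{\star\star}(0)=-p(0)$, so that the primal and dual value functions are genuine conjugate-duals of one another. Part~(a) is then immediate: since the biconjugate is the closed convex hull, $p^{\star\star}\le p$ pointwise, and evaluating at $0$ gives $p(0)\ge p^{\star\star}(0)=-q(0)$.

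For part~(b), I would invoke the fact that a convex function agrees with its closure $\cl p=p^{\star\star}$ throughout $\ri\dom p$. Hence $0\in\ri\dom p$ forces $p(0)=p^{\star\star}(0)=-q(0)$, which is strong duality; the improper case where $p\equiv-\infty$ on $\ri\dom p$ is covered automatically, as both sides equal $-\infty$. For the attainment and subdifferential claim, note that $\argmax_y-F^\star(0,y)$ is exactly the set of $y$ with $p^\star(y)=-p^{\star\star}(0)$, i.e.\ $\partial p^{\star\star}(0)$; once $p(0)=p^{\star\star}(0)$ is finite, this coincides with $\partial p(0)$, which is nonempty because a finite convex function is subdifferentiable at every relative-interior point of its domain. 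Any such subgradient $y$ satisfies the Fenchel--Young equality $p(0)+F^\star(0,y)=0$, so it attains the infimum defining $q(0)$. The symmetric statement follows verbatim after exchanging the roles of $p$ and $q$ via $q^\star(x)=F(x,0)$.

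Parts~(c) and~(d) rest on the classical criterion that the subdifferential of a convex function is nonempty and bounded precisely at interior points of its domain. Since $\argmax_y-F^\star(0,y)=\partial p^{\star\star}(0)$, this set is nonempty and bounded iff $p^{\star\star}(0)$ is finite and $0\in\inter\dom p^{\star\star}$. The translation to the stated condition uses $\inter\dom p^{\star\star}=\inter\dom p$ (interiors of a convex set, its closure, and any set sandwiched between them coincide) together with the fact that at such interior points $p(0)=p^{\star\star}(0)$, so that finiteness of one value is finiteness of the other. Part~(d) is identical with $q$ in place of $p$.

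Finally, for the optimality conditions~(e), I would appeal to the subdifferential-inversion rule for the closed proper convex function $F$: for any pair, $(0,\bar y)\in\partial F(\bar x,0)\iff(\bar x,0)\in\partial F^\star(0,\bar y)\iff F(\bar x,0)+F^\star(0,\bar y)=\langle(\bar x,0),(0,\bar y)\rangle=0$. This supplies the two right-hand equivalences and identifies the third displayed line with the Fenchel--Young equality. It remains to see that the subgradient condition also forces the $\argmin$ and $\argmax$ memberships: restricting the subgradient inequality for $\partial F(\bar x,0)$ to points of the form $(x,0)$ gives $F(x,0)\ge F(\bar x,0)$ for all $x$, and restricting the inequality for $\partial F^\star(0,\bar y)$ to points $(0,y)$ gives $F^\star(0,y)\ge F^\star(0,\bar y)$ for all $y$; conversely the three displayed lines contain the Fenchel--Young equality as their third entry, closing the loop. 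The main obstacle throughout is not any single deep step but the careful bookkeeping of properness and the degenerate $-\infty$ cases, and securing the interior-of-domain identifications that let one pass freely between statements about $p$ and about its biconjugate $p^{\star\star}$.
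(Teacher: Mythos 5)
Your proposal is correct, but it is organized quite differently from the paper's proof, and it proves more. The paper treats this theorem as a citation: parts (a), (c), (d), (e) are deferred entirely to Rockafellar--Wets Theorem 11.39, and the only thing proved is part (b), since that is the only place where the paper strengthens the textbook statement (relative interior in place of interior). The paper's argument for (b) is a direct computation: it invokes Rockafellar's Theorem 23.4 to get $\partial p(0)\neq\emptyset$ at a relative-interior point, then unwinds the subgradient inequality for $\phi\in\partial p(0)$, recognizes the resulting infimum as $-F^{\star}(0,\phi)$, and sandwiches $p(0)\le -F^{\star}(0,\phi)\le -q(0)\le p(0)$ using part (a); the converse inclusion $\argmax_y -F^{\star}(0,y)\subseteq\partial p(0)$ is again a direct manipulation. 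You instead set up the conjugacy identities $p^{\star}(y)=F^{\star}(0,y)$ and $q^{\star}(x)=F(x,0)$ (the latter using $F^{\star\star}=F$) and then run everything through the biconjugate: $p(0)=p^{\star\star}(0)$ on $\ri\dom p$ via the closure theorem, the identification $\argmax_y -F^{\star}(0,y)=\partial p^{\star\star}(0)=\partial p(0)$ via Fenchel--Young, the nonempty-and-bounded subdifferential criterion for (c)--(d), and subdifferential inversion for (e). The ingredients for (b) are essentially the same in both treatments (Theorem 23.4 plus Fenchel--Young), but your route makes the whole theorem self-contained and makes the $p$/$q$ symmetry transparent, at the price of having to police the improper and $-\infty$ cases yourself --- which you flag but partly gloss: e.g.\ in part (c), when $p$ takes the value $-\infty$, the identity $\argmax_y -F^{\star}(0,y)=\partial p^{\star\star}(0)$ actually fails (the left side is empty under the usual convention, while the right side is all of $\R^m$), and the equivalence survives only because both sides of the stated ``iff'' fail simultaneously; a fully rigorous write-up should dispose of that case separately before invoking the identification. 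The paper's approach buys brevity by leaning on the reference; yours buys a complete, reference-free proof at the cost of this extra bookkeeping.
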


\begin{proof}
  The only difference between the statement of this
  theorem and that in \cite[Theorem 11.39]{rockafellarwets} is in part (b). Here, we
  make use of the relative interior rather than the interior.  Thus,
  we only prove part (b).  Suppose $0\in \ri\dom p$. If
  $p(0)=-\infty$, then $p(0)=-q(0)$ follows by
  Part~(\ref{it:pq_ineq}).  Hence we can assume that $p(0)$ is finite,
  and conclude that $p$ is proper.  By \cite[Theorem
  23.4]{rockafellar}, $\partial p(0)\ne\emptyset$, and given
  $\phi \in \partial p(0)$,
  \[
    p(0) \le p(u) - \ip{\phi}{u}
    = \inf_x  \Set{F(x,u) -
      \ip*{\begin{pmatrix} 0 \\ \phi \end{pmatrix}}
      {\begin{pmatrix} x \\u \end{pmatrix}}} \quad \forall u.
  \]
  By taking the infimum over
  $u$ and recognizing the right-hand side as
  $-F^{\star}(0,\phi)$, we deduce that $p(0) \le
  -F^\star(0,\phi)\le-q(0)$.  Combining this with Part (a) yields
  $p(0) =-F^\star(0,\phi)= -q(0)$. Hence $\phi\in \argmax_y
  -F^{\star}(0,y)\ \ne\emptyset$.  Conversely, given any $\phi
  \in \argmax_y -F^{\star}(0,y)$, we have
\[
\begin{aligned}
p(0)&=- F^\star(0,\phi)
= \inf_{x,u} \left\{F(x,u)-\ip*{\begin{pmatrix}0\\ \phi\end{pmatrix}}{\begin{pmatrix}x\\ u
\end{pmatrix}}\right\}
\\ &= \inf_{u}\left\{ p(u)-\ip{\phi}{u}\right\}
  \le p(v)-\ip{\phi}{v}\qquad\forall\ v,
\end{aligned}
\]
and so $\phi\in\partial p(0)$.
The case $0\in \ri\dom q$ follows by an analogous argument.
\end{proof}

\subsection{A perturbation for gauge duality}\label{seubsec:deriv_gauge}

We now show that the problems \cref{eq:gauge-primal} and
\cref{eq:gauge-dual} constitute a primal-dual pair under the framework
set out by \cref{thm:gen_dual_frame}. The key is to postulate the
correct pairing function $F$. In the derivation below, we show
that the gauge primal-dual pair corresponds to the primal and dual
value functions
\begin{subequations}
  \begin{align}
    \label{eq:7a}
   v_p(u)        &:=
  \inf_{\mu>0,\,x}
  \set{\mu | \rho\left(b-Ax + \mu u \right)\leq \sigma,\ \kappa(x)\leq \mu},
    \\\label{eq:7b}
    v_d(t,\theta) &:=
  \inf_{y}
  \set{\kappa^{\circ}(A\T y+t) | \langle b,y \rangle - \sigma\rho^{\circ}(y)\geq 1+\theta},
  \end{align}
\end{subequations}
where, as in \eqref{eq:gauge-dual}, we use the convention described
by~\eqref{eq:replacement} and~\eqref{eq:convention-sigma-zero}.  The
parameters $u$ and $(t,\theta)$ are perturbations to the primal and
dual gauge problems, respectively. This perturbation scheme differs
significantly from that used in Fenchel-Rockafellar
duality---cf.~\eqref{eq:9}---because of the product $\mu u$.

We begin by observing that $v_p(0)$ is equal to the optimal value of
the primal \cref{eq:gauge-primal}.  Because $u$ and $\mu$ appear as a
product in this definition, it is convenient to reparametrize the
problem by setting $\lambda:= 1/\mu$ and $w:=x/\mu$. The positive
homogeneity of $\kappa$ and $\rho$ allows us to equivalently phrase
the primal value function as
\[
  v_p(u)=\inf_{\lambda>0,\, w}\Set{1/\lambda|\rho(\lambda b - Aw +u)\leq\sigma\lambda,\ w\in \Uscr_{\kappa}}.
\]
In particular, this reparameterization shows that the value function
$v_p$ is convex because it is the infimal projection of a convex
function, and it is proper when the primal \cref{eq:gauge-primal} is
feasible.

We now construct the function $F$ appearing in
\cref{thm:gen_dual_frame} associated with this duality framework.  In
this construction, we assume that $\sigma>0$, possibly making the
replacement \eqref{eq:replacement} if $\sigma=0$.
Note that minimizing $1/\lambda$ is equivalent to minimizing
$-\lambda$ for $\lambda\ge 0$.  Define the convex function
$F\colon\R^{n}\times \R \times \R^m\to\overline\R$ by
\begin{equation*} \label{eq:5}
  F(w,\lambda,u):=
  -\lambda  %
  + \delta_{(\epi \rho)\times \Uscr_{\kappa}}
  \left(
    W
    \begin{bmatrix}w\\ \lambda\\u \end{bmatrix}
  \right),
  \ \mbox{ where }\
  W:=\begin{bmatrix}-A &b & I_{m} \\ 0 &\sigma & 0 \\ I_{n} & 0 & 0\end{bmatrix}.
\end{equation*}
Observe that the matrix $W$ is nonsingular. %

Because $(0,0,0) \in \dom F$, and $\kappa$ and $\rho$
are closed, the function $F$ is closed and proper.
This pairing function gives rise to the infimal projection problems
\begin{equation}\label{eq:10}
  p(u):=\inf_{\lambda\ge0,w}\, F(w,\lambda,u)
  \textt{and}
  q(t,\theta):=\inf_{y}\, F^\star(t,\theta,y),
\end{equation}
which correspond to the general definitions shown
in~\eqref{eq:general-value-functions}. Note that the function $p$ is
the reciprocal of $v_p$, as formalized in the following lemma (stated without proof).

\begin{lemma}\label{obs:simp}
  Equality $v_p(u)=-1/p(u)$ holds provided that $v_p(u)$ is nonzero
  and finite. Moreover, $v_p(u)=0$ if and only if $p(u)=-\infty$, and
  $p(u)=0$ if and only if $v_p(u)=+\infty$.
\end{lemma}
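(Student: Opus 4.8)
The plan is to translate both $v_p$ and $p$ into statements about the same set of feasible scaling parameters $\lambda$, and then to exploit the elementary behaviour of the reciprocal map $\lambda\mapsto 1/\lambda$. First I would unpack the definition of $F$. Writing out the product,
\[
W\begin{bmatrix}w\\\lambda\\u\end{bmatrix}=\begin{bmatrix}\lambda b-Aw+u\\\sigma\lambda\\w\end{bmatrix},
\]
so the indicator $\delta_{(\epi\rho)\times\Uscr_\kappa}$ is finite precisely when $\rho(\lambda b-Aw+u)\le\sigma\lambda$ and $w\in\Uscr_\kappa$. Hence, from the definition of $p$ in \eqref{eq:10},
\[
-p(u)=\sup\set{\lambda\ge0|\exists\,w\in\Uscr_\kappa,\ \rho(\lambda b-Aw+u)\le\sigma\lambda}=:\beta(u),
\]
while the reparametrized form of \eqref{eq:7a} reads $v_p(u)=\inf\set{1/\lambda|\lambda>0,\ \exists\,w\in\Uscr_\kappa,\ \rho(\lambda b-Aw+u)\le\sigma\lambda}$. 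Denoting the feasible $\lambda$-set by $\Lambda(u)\subseteq[0,\infty)$ and its positive part by $\Lambda^+(u):=\Lambda(u)\cap(0,\infty)$, we have $-p(u)=\sup\Lambda(u)$ and $v_p(u)=\inf_{\lambda\in\Lambda^+(u)}1/\lambda$.

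The engine of the proof is the identity $\inf_{\lambda\in\Lambda^+(u)}1/\lambda=1/\sup\Lambda^+(u)$, valid for any nonempty $\Lambda^+(u)\subseteq(0,\infty)$ under the convention $1/(+\infty)=0$, since $\lambda\mapsto 1/\lambda$ is a decreasing homeomorphism of $(0,\infty)$. I would also note that $\Lambda(u)$ is convex---it is the image under the linear projection $(\lambda,w)\mapsto\lambda$ of the convex feasible set carved out by the convex constraints---hence an interval; consequently adjoining the point $\lambda=0$ cannot change the supremum whenever a strictly positive feasible point exists, i.e.\ $\sup\Lambda^+(u)=\sup\Lambda(u)=\beta(u)$ as soon as $\Lambda^+(u)\neq\emptyset$.

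The three assertions then follow by a short case split. If $\Lambda^+(u)\neq\emptyset$, then $v_p(u)=1/\beta(u)=-1/p(u)$; specializing, $\beta(u)\in(0,\infty)$ gives the finite nonzero case $v_p(u)=-1/p(u)$, whereas $\beta(u)=+\infty$ gives $v_p(u)=0$ together with $p(u)=-\infty$, establishing $v_p(u)=0\Leftrightarrow p(u)=-\infty$ in this regime. If instead $\Lambda^+(u)=\emptyset$ but $0\in\Lambda(u)$, then $\beta(u)=0$, so $p(u)=0$ and $v_p(u)=\inf\emptyset=+\infty$, which yields $p(u)=0\Leftrightarrow v_p(u)=+\infty$. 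The reverse implications are obtained by reading each chain backwards, using that $v_p(u)=0$ forces $\Lambda^+(u)$ to be unbounded and that $p(u)=0$ forces $\sup\Lambda(u)=0$.

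I expect the only genuinely delicate point---and the main obstacle---to be the bookkeeping at the boundary $\lambda=0$, where the sets defining $p$ (with $\lambda\ge0$) and $v_p$ (with $\lambda>0$) differ; this single discrepancy is exactly what produces the two ``degenerate'' equivalences rather than one clean reciprocal relation. One must, in particular, keep the fully infeasible case $\Lambda(u)=\emptyset$ (in which $p(u)=+\infty=v_p(u)$) separate; this case does not arise at the point of interest $u=0$, since $(w,\lambda)=(0,0)$ is feasible there because $\rho(0)=0$, and it is excluded more generally whenever the primal problem is feasible, so the stated equivalences hold on the domain where the lemma is applied.
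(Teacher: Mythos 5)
Your proof is correct, but there is nothing in the paper to compare it against: the authors explicitly state \cref{obs:simp} \emph{without proof}, so your argument fills a gap rather than retraces one. The route you take---rewriting both quantities in terms of the feasible scaling set $\Lambda(u)=\set{\lambda\ge 0 | \exists\, w\in\Uscr_\kappa,\ \rho(\lambda b-Aw+u)\le\sigma\lambda}$, so that $-p(u)=\sup\Lambda(u)$ and $v_p(u)=\inf_{\lambda\in\Lambda^+(u)}1/\lambda$, and then invoking the order-reversing bijection $\lambda\mapsto1/\lambda$ on $(0,\infty)$---is the natural one and is carried out correctly. One small simplification: you do not need convexity of $\Lambda(u)$ to conclude $\sup\Lambda^+(u)=\sup\Lambda(u)$; since $\Lambda(u)\subseteq\Lambda^+(u)\cup\{0\}$, adjoining the single point $0$ can never change a supremum that is already positive. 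The most valuable part of your write-up is the caveat at the end, and it deserves to be stated plainly: as literally written, the equivalence ``$p(u)=0$ if and only if $v_p(u)=+\infty$'' is \emph{false} when the perturbed problem at $u$ is infeasible, i.e.\ when $\Lambda(u)=\emptyset$, for then $p(u)=v_p(u)=+\infty$. This case genuinely occurs: take $m=1$, $A=0$, $\rho=|\cdot|$, $b=1$, $\sigma=1/2$, $u=1$; then $|\lambda+1|\le\lambda/2$ has no solution $\lambda\ge0$. So the lemma must be read with the implicit proviso $\Lambda(u)\ne\emptyset$. As you observe, this is harmless for the paper: every invocation of the lemma is at $u=0$, where $(w,\lambda)=(0,0)$ is feasible because $\rho(0)=0\le\sigma\cdot 0$ and $0\in\Uscr_\kappa$, so $\Lambda(0)\ne\emptyset$ always. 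Your phrasing ``excluded more generally whenever the primal problem is feasible'' is slightly loose---what matters is feasibility of the \emph{perturbed} problem at the given $u$, not feasibility of \eqref{eq:gauge-primal} itself---but the conclusion you draw from it is the right one.
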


We now compute the conjugate of $F$, which is needed to derive the dual value
function $q$.
By Rockafellar and Wets~\cite[Theorem
11.23(b)]{rockafellarwets},
\begin{equation*} \label{eq:conjugate-of-F}
  F^{\star}(t,\theta,y)=\cl \inf_{z,\beta,r}
  \Set{\delta^{\star}_{(\epi \rho)\times \Uscr_{\kappa}}
    \begin{pmatrix}z\\ \beta\\r \end{pmatrix} |
    W\T
    \begin{bmatrix}z\\\beta\\r \end{bmatrix}
   =\begin{bmatrix}t\\\theta\\y \end{bmatrix}
   +\begin{bmatrix}0\\1\\0 \end{bmatrix}},
\end{equation*}
where the closure operation $\cl$ is applied to the function on the
right-hand side with respect to the argument $(t,\theta,y
)$. %
Using the definition of $W$, the constraint in the description of
$F^{\star}$ is precisely
$(r-A\T z, \ip b z + \sigma \beta, z) = (t, \theta +1, y)$, and the
unique vector that satisfies these constraints is
$(z,\beta,r) = (y, \, \sigma^{-1}(\theta + 1 - \ip b y ), \, t+ A\T y)$. The
closure operation is therefore superfluous, and we obtain
\begin{align*}
  F^{\star}(t,\theta,y )
  &=\delta^{\star}_{(\epi \rho)\times \Uscr_{\kappa}}
    \begin{pmatrix}y \\\sigma^{-1}(1+\theta-\langle b,y\rangle)\\ t+A\T y\end{pmatrix}
\\&=\delta^{\star}_{\epi \rho}\begin{pmatrix}y\\\sigma^{-1}(1+\theta-\ip b y)\end{pmatrix}
  +\delta^{\star}_{\Uscr_{\kappa}}(t+A\T y).
\end{align*}
Since
$\delta^{\star}_{\epi
  \rho}(z_1,z_2)=\delta_{\epi\rho^\circ}(z_1,-z_2)$
and
$\delta^\star_{\Uscr_\kappa}=\kappa^\circ$ by \eqref{eq:epi-polar} and \eqref{eq:polar-sup}, %
this reduces to
\[
  F^{\star}(t,\theta,y ) =\delta_{\epi \rho^{\circ}}\begin{pmatrix}y
    \\-\sigma^{-1}(1+\theta-\ip b y)\end{pmatrix}
  +\kappa^{\circ}(t+A\T y).
\]
The application of \cref{thm:gen_dual_frame} asks that we evaluate these conjugates at
$(t,\theta)=(0,0)$, which yields the expression
\begin{equation*}
F^{\star}(0,0,y)=      \left\{\begin{array}{ll}
      \kappa^{\circ}(A\T y) & \textrm{if }~ \langle b,y\rangle-\sigma\rho^\circ(y)\geq 1 \\
      +\infty & \mbox{otherwise.}
    \end{array}
\right.
\end{equation*}
Thus, the dual problem
\[
  -q(0,0) = -\inf_y\, F^\star(0,0,y) = \sup_{y}\,-F^{\star}(0,0,y)
\]
recovers, up to a sign change, the required gauge dual problem
\eqref{eq:gauge-dual} when $\sigma > 0$.  When $\sigma=0$, we also
recover the gauge dual problem \eqref{eq:gauge-dual} by making the
appropriate substitutions~\eqref{eq:replacement} under the
convention~\eqref{eq:convention-sigma-zero}. %

This discussion justifies the definition of the dual perturbation
function $v_d(t,\theta) :=\inf_{y}\, F^{\star}(t,\theta,y)$, which is
equivalent to the expression~\eqref{eq:7b}.  Note that $v_d(0,0)$ is
the optimal value of \eqref{eq:gauge-dual}. In summary, $(-1/v_p)$ and
$v_d$, respectively, play the roles of $p$ and $q$ as defined
in~\eqref{eq:10}. In the application of \cref{thm:gen_dual_frame}, we
identify $x$ with $(w,\lambda)$, and $v$ with $(t,\theta)$.

\subsection{Proof of gauge duality} \label{sec:proof-guage-duality}

We now use the perturbation framework from \cref{seubsec:deriv_gauge}
to prove weak and strong duality results for the gauge duality
setting. \cref{thm:gauge-duality}~\cite[section 5]{gaugepaper} is
already known, but the proof via perturbation is new.

The following auxiliary
result ties the feasibility of the gauge pair \eqref{eq:gauge-primal}
and \eqref{eq:gauge-dual} to the domain of the value function. The
proof of this result, which is largely an application of the calculus
of relative interiors, is deferred to \cref{sec:proof-crefpr}.

\begin{lemma}[Feasibility and domain of the value function]\label{prop:strict_feas}
  If the primal \eqref{eq:gauge-primal} is relatively strictly
  feasible, then $0\in \ri\dom p$. If the dual
  \eqref{eq:gauge-dual} is relatively strictly feasible, then
  $0\in \ri\dom v_d$. The analogous implications, where the $\ri$
  operator is replaced by the $\inter$ operator, hold under strict
  feasibility (not relative).
\end{lemma}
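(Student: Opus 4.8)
The plan is to reduce both implications to the calculus of relative interiors, using two structural facts: the domains of $p$ and $v_d$ are coordinate projections of $\dom F$ and $\dom F^\star$, and each of $\dom F$, $\dom F^\star$ is a \emph{fixed} convex set precomposed with an invertible (affine) map. Since an invertible linear map, its inverse, a coordinate projection, and a Cartesian product all commute with the $\ri$ operator (\cite[Theorem 6.6]{rockafellar} and its corollaries), each claim reduces to exhibiting a single point of the appropriate relative interior, which the strict-feasibility hypothesis supplies. Throughout I assume $\sigma>0$, invoking the replacement \eqref{eq:replacement} when $\sigma=0$.

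For the primal implication, note that $F$ is finite precisely on $\dom F=W^{-1}\big((\epi\rho)\times\Uscr_\kappa\big)$, and that $\dom F\subseteq\{\lambda\ge 0\}$ because the epigraph constraint forces $\sigma\lambda\ge\rho(\cdot)\ge 0$; hence $\dom p=\mathrm{proj}_u\dom F$. Nonsingularity of $W$ gives $\ri\dom F=W^{-1}\big(\ri\epi\rho\times\ri\Uscr_\kappa\big)$, so by \cite[Theorem 6.6]{rockafellar} the condition $0\in\ri\dom p$ is equivalent to the existence of $(w,\lambda)$ with $(w,\lambda,0)\in\ri\dom F$. Expanding $W$ and using $\ri\epi\rho=\{(z,s):z\in\ri\dom\rho,\ s>\rho(z)\}$ (\cite[Theorem 7.6]{rockafellar}), this requires $\lambda b-Aw\in\ri\dom\rho$, $\rho(\lambda b-Aw)<\sigma\lambda$, and $w\in\ri\Uscr_\kappa$. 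I would take $\bar x$ witnessing relative strict feasibility, so that $b-A\bar x\in\ri\dom\rho$, $\rho(b-A\bar x)<\sigma$, and $\bar x\in\ri\dom\kappa$ (the expression for $\ri\Fscr_p$ in \cref{sect: assumptions}), and set $w=\lambda\bar x$ for small $\lambda>0$. Because $\dom\rho$ and $\dom\kappa$---and hence their relative interiors---are cones, positive homogeneity of $\rho$ makes the first two conditions hold for every $\lambda>0$, while any $\lambda$ small enough that $\kappa(\lambda\bar x)=\lambda\kappa(\bar x)<1$ secures the third.

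The dual implication is analogous and needs no rescaling. Here $F^\star$ is finite precisely on $\dom F^\star=L^{-1}\big((\epi\rho^\circ)\times(\dom\kappa^\circ)\big)$, where $L(t,\theta,y)=\big(y,\ \sigma^{-1}(\ip b y-1-\theta),\ t+A\T y\big)$ is an invertible affine map; consequently $\ri\dom F^\star=L^{-1}\big(\ri\epi\rho^\circ\times\ri\dom\kappa^\circ\big)$. Since $\dom v_d=\mathrm{proj}_{(t,\theta)}\dom F^\star$, the statement $0\in\ri\dom v_d$ reduces to finding $y$ with $(0,0,y)\in\ri\dom F^\star$, i.e. $y\in\ri\dom\rho^\circ$, $\ip b y-\sigma\rho^\circ(y)>1$, and $A\T y\in\ri\dom\kappa^\circ$. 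These are exactly the defining conditions of $A\T\ri\Fscr_d\cap\ri\dom\kappa^\circ$ (the expression for $\ri\Fscr_d$ in \cref{sect: assumptions}), so any witness of dual relative strict feasibility works directly.

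The strict (non-relative) statements follow by repeating both arguments with $\inter$ in place of $\ri$: invertible affine maps and Cartesian products commute with $\inter$, projections send interior points to interior points, and $\kappa$ is continuous on $\inter\dom\kappa$. The step I expect to be the main obstacle is the passage, in the primal case, from $\bar x\in\ri\dom\kappa$ to $\lambda\bar x\in\ri\Uscr_\kappa$---that is, from the relative interior of the \emph{domain} of $\kappa$ to that of its unit \emph{sublevel} set $\Uscr_\kappa$. I would settle this using continuity of the convex function $\kappa$ relative to $\ri\dom\kappa$: since $\kappa(\lambda\bar x)<1$ there is a neighborhood of $\lambda\bar x$, relatively open in $\aff{\dom\kappa}$, on which $\kappa<1$; this simultaneously forces $\aff{\Uscr_\kappa}=\aff{\dom\kappa}$ and exhibits $\lambda\bar x$ as a relative interior point of $\Uscr_\kappa$. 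This level-set-versus-domain bookkeeping is absent on the dual side, where $\kappa^\circ$ enters only through its domain, which is why the dual implication is the more routine of the two.
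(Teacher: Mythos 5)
Your proof is correct, and it arrives at exactly the same core reduction as the paper---exhibit a rescaled witness $(w,\lambda)=(\lambda\bar x,\lambda)$ satisfying $\lambda(b-A\bar x)\in \ri\dom\rho$, $\rho(\lambda(b-A\bar x))<\sigma\lambda$, and $\lambda\bar x\in\ri \Uscr_\kappa$ (and its analogue for $v_d$)---but it routes the relative-interior calculus differently. The paper encodes $(\dom p)\times\{0\}\times\{0\}$ as an intersection $L\cap\left(\range M+(\epi\rho)\times\Uscr_\kappa\right)$ of a subspace with a Minkowski sum, and then applies \cite[Theorem 6.5]{rockafellar} (intersections, with its nonemptiness proviso) together with \cite[Corollary 6.6.2]{rockafellar} (sums); you instead write $\dom p$ as the coordinate projection of $W^{-1}\big((\epi\rho)\times\Uscr_\kappa\big)$ and use that $\ri$ commutes with linear images \cite[Theorem 6.6]{rockafellar} and with preimages under invertible affine maps, and likewise read the dual case directly off the computed form of $F^\star$ via the invertible affine change of variables. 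The two encodings collapse to identical pointwise conditions, so the difference is one of bookkeeping; yours is arguably the more transparent route, and your preliminary observation that $\dom F\subseteq\{\lambda\ge 0\}$ (so the constraint $\lambda\ge0$ in \eqref{eq:10} is inactive and $\dom p$ really is the projection of $\dom F$) is precisely the detail that makes the projection encoding legitimate---the paper's intersection-with-subspace formulation quietly absorbs this same point. Two minor remarks: the formula $\ri\epi\rho=\set{(z,s) | z\in\ri\dom\rho,\ s>\rho(z)}$ that you attribute to \cite[Theorem 7.6]{rockafellar} is \cite[Lemma 7.3]{rockafellar} (Theorem 7.6 is the sublevel-set statement); and your continuity argument for $\lambda\bar x\in\ri\Uscr_\kappa$, including the identification $\aff{\Uscr_\kappa}=\aff{\dom\kappa}$, is sound but can be replaced by the one-line citation of \cite[Theorem 7.6]{rockafellar}, which is what the paper does.
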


The duality relations in the gauge framework follow analogous
principles to Lagrange duality, except that instead of an additive
relationship between the primal and dual optimal values
the relationship is multiplicative. The following theorem
summarizes weak and strong duality for gauge optimization.

\begin{theorem}[Gauge duality \cite{gaugepaper}]
  \label{thm:gauge-duality}
  Set $\nu_p:= v_p(0) $ and $\nu_d:= v_d(0,0)$.
  Then the following relationships hold for the
  gauge primal-dual pair \eqref{eq:gauge-primal} and
  \eqref{eq:gauge-dual}.
  \begin{enumerate}[{(a)}]
  \item \label{it:gauge-duality-basic} (Basic Inequalities)
  It is always the case that
\[
  \text{(i)}\quad(1/\nu_p)\leq \nu_d
  \quad\text{and}\quad
  \text{(ii)}\quad(1/\nu_d)\le \nu_p.
\]
In particular, if $\nu_p=0$ (resp.~$\nu_d=0$), then
\eqref{eq:gauge-dual} (resp.~\eqref{eq:gauge-primal}) is infeasible.

  \item \label{it:gauge-duality-weak} (Weak duality)  If $x$ and $y$
    are primal and dual feasible, then
    \[
      1\le \nu_p \nu_d\le\kappa(x)\cdot \kappa^{\circ}(A\T y).
    \]
  \item \label{it:gauge-duality-strong} (Strong duality)  If the dual
    (resp. primal) is feasible and the primal (resp. dual) is
    relatively strictly feasible, then $\nu_p\nu_d = 1$ and the gauge
    dual (resp. primal) attains its optimal value.
  \end{enumerate}
\end{theorem}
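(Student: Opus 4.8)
The plan is to transport everything through the value functions $p$ and $q$ of the abstract framework, using the identifications established in \cref{seubsec:deriv_gauge}: here $p=-1/v_p$ and $q=v_d$, so that $p(0)=-1/\nu_p$ (via \cref{obs:simp}) and $q(0,0)=\nu_d$. For the basic inequalities, I would first obtain (i) directly from the universal weak inequality $p(0)\ge -q(0)$ of \cref{thm:gen_dual_frame}(a). Substituting the identifications turns $p(0)\ge -q(0)$ into $-1/\nu_p\ge-\nu_d$, i.e.\ $1/\nu_p\le\nu_d$. Here one must run the three branches of \cref{obs:simp} --- $\nu_p\in(0,\infty)$ giving $p(0)=-1/\nu_p$ finite, $\nu_p=0$ giving $p(0)=-\infty$ (forcing $\nu_d=+\infty$), and $\nu_p=+\infty$ giving $p(0)=0$ --- so that (i) reads correctly in the extended reals $[0,+\infty]$ under the conventions $1/0=+\infty$ and $1/\infty=0$. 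Inequality (ii) then follows from (i) by purely extended-real arithmetic: when $\nu_p,\nu_d\in(0,\infty)$ one has $1/\nu_p\le\nu_d\Leftrightarrow 1/\nu_d\le\nu_p$, and the boundary cases $\nu_p\in\{0,\infty\}$ or $\nu_d\in\{0,\infty\}$ are checked one at a time. The two ``in particular'' statements are exactly the boundary readings: $\nu_p=0$ forces $\nu_d=+\infty$ (dual infeasible) in (i), and $\nu_d=0$ forces $\nu_p=+\infty$ (primal infeasible) in (ii).

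For weak duality, feasibility of both $x$ and $y$ makes both problems feasible, hence $\nu_p<\infty$ and $\nu_d<\infty$; combined with the infeasibility statements just proved, dual feasibility rules out $\nu_p=0$ and primal feasibility rules out $\nu_d=0$, so $\nu_p,\nu_d\in(0,\infty)$. Then (i) gives the left bound $1\le\nu_p\nu_d$. The right bound is immediate from optimality: a primal-feasible $x$ satisfies $\kappa(x)\ge\nu_p$ and a dual-feasible $y$ satisfies $\kappa^\circ(A\T y)\ge\nu_d$, and multiplying these nonnegative inequalities yields $\nu_p\nu_d\le\kappa(x)\,\kappa^\circ(A\T y)$.

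For strong duality I would treat the two symmetric cases through \cref{thm:gen_dual_frame}(b). Suppose the dual is feasible and the primal is relatively strictly feasible. By \cref{prop:strict_feas}, $0\in\ri\dom p$, so \cref{thm:gen_dual_frame}(b) gives $p(0)=-q(0)$. Since the dual is feasible ($\nu_d<\infty$) and, by the basic inequalities, dual feasibility forces $\nu_p>0$ while primal feasibility forces $\nu_p<\infty$, we have $\nu_p\in(0,\infty)$ and hence $p(0)=-1/\nu_p$ finite; the equality $p(0)=-q(0)=-\nu_d$ then reads $\nu_p\nu_d=1$. Because $q(0)$ is finite, the same part of \cref{thm:gen_dual_frame} asserts that the infimum defining $q(0,0)$ is attained, the attainers being $\partial p(0)\ne\emptyset$; since $F^\star(0,0,\cdot)$ agrees with $\kappa^\circ(A\T\cdot)$ on the dual-feasible set, this is precisely attainment of the gauge dual's optimal value. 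The reverse case (primal feasible, dual relatively strictly feasible) is symmetric: \cref{prop:strict_feas} gives $0\in\ri\dom v_d=\ri\dom q$, the second clause of \cref{thm:gen_dual_frame}(b) yields $p(0)=-q(0)$ together with attainment of $p(0)$, and unwinding the reparametrization recovers a genuine primal optimal solution.

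The step I expect to demand the most care is the last one --- transferring attainment of the abstract value $p(0)$ back to the concrete primal \eqref{eq:gauge-primal}. Because $p(0)=-1/\nu_p$ is strictly negative when $\nu_p\in(0,\infty)$, any minimizer $(\bar w,\bar\lambda)$ of $F(\cdot,\cdot,0)$ must satisfy $-\bar\lambda=p(0)$, forcing $\bar\lambda=1/\nu_p>0$; the reparametrization $x=\bar w/\bar\lambda$ is then invertible and produces an honest minimizer of \eqref{eq:gauge-primal}. Verifying that $\bar\lambda>0$, thereby ruling out a degenerate recession-direction minimizer with $\bar\lambda=0$, is the crux. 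The remaining delicate bookkeeping is the extended-real case analysis in the basic inequalities, where the reciprocal identity of \cref{obs:simp} must be invoked on the correct branch.
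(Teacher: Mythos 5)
Your proposal is correct and follows essentially the same route as the paper's proof: the framework inequality $p(0)\ge -q(0)$ combined with the branches of \cref{obs:simp} for part (a), multiplication of the bounds $\kappa(x)\ge\nu_p$ and $\kappa^\circ(A\T y)\ge\nu_d$ for part (b), and \cref{prop:strict_feas} feeding \cref{thm:gen_dual_frame}(b) for part (c). Your explicit check that any minimizer of $F(\cdot,\cdot,0)$ must have $\bar\lambda = 1/\nu_p > 0$, so that attainment of $p(0)$ transfers to an honest minimizer of \eqref{eq:gauge-primal}, spells out a step the paper leaves terse (it is essentially the argument the paper defers to \cref{thm:gauge_opt_pert}(b)), and your derivation of inequality (\textit{ii}) from (\textit{i}) by order-reversing reciprocals in $[0,+\infty]$ is a harmless cosmetic variant of the paper's case analysis on $p_0\ge 0$.
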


\begin{proof}
To simplify notation, in this proof we denote the optimal value of the primal value function by
$p_0\equiv p(0)$.

Part (a). We begin with the inequality (\textit{i}). \cref{thm:gen_dual_frame}
guarantees the inequality
\begin{equation}\label{eqn:main_ineq}
  p_0\geq -\inf_{y} F^{\star}(0,0,y)=-\nu_d.
\end{equation}
By \cref{obs:simp}, whenever $\nu_p$ is nonzero and finite, equality
$p_0=-1/\nu_p$ holds, which together with~\eqref{eqn:main_ineq} yields
 (\textit{i}).  If, on the other hand, $\nu_p=+\infty$, then
(\textit{i}) is trivial. Finally, if $\nu_p=0$, \cref{obs:simp}
yields $p_0=-\infty$, and hence \eqref{eqn:main_ineq} implies
$\nu_d=+\infty$, and (\textit{i}) again holds. Thus, (\textit{i})
holds always.  To establish (\textit{ii}), it suffices to consider
the case $\nu_d=0$. From \eqref{eqn:main_ineq} we conclude $p_0\geq 0$, that is either
$p_0=0$ or $p_0=+\infty$.  By \cref{obs:simp}, the first case $p_0=0$ implies
$\nu_p=+\infty$ and therefore (\textit{ii}) holds. The second case $p_0=+\infty$ implies that the primal problem is infeasible, that is $\nu_p=+\infty$, and again (\textit{ii}) holds. Thus (\textit{ii}) holds always, as required.

Part (b).  Because the gauge primal and dual problems are both
feasible, $\nu_p$ and $\nu_d$ are nonzero and finite so the result
follows from part (a).

Part (c). Suppose the dual is feasible and the primal is
relatively strictly feasible.  In particular, both $\nu_p$ and
$\nu_d$ are nonzero and finite by part (a).
Hence $1\leq \nu_p\nu_d=-\nu_d/p_0$.
On the other hand, by \cref{prop:strict_feas} the assumption that the primal is relatively
strictly feasible implies $0\in \ri\dom p$. This last inequality implies $p_0=p(0)$ is
finite, and hence $p(\cdot)$ is proper.
\cref{thm:gen_dual_frame}(b) tells us that $p_0=-\nu_d$ and the
infimum in the dual $\nu_d$ is attained. Thus we deduce
$1= \nu_p \nu_d$, as claimed.

Conversely, suppose that the primal is feasible and the dual is
relatively strictly feasible.
Then, by \cref{prop:strict_feas}, $0\in \ri\dom q$ . This in turn implies
$p_0=-\nu_d$ and that the infimum in $p(0)$ is attained. Since the primal is
feasible, by \cref{obs:simp}, $p_0$ is nonzero, and hence $1= \nu_p \nu_d$
and the infimum in the primal is attained.
\end{proof}

\subsection{Gauge optimality conditions}

Our perturbation framework can be harnessed to develop optimality
conditions for the gauge pair that relate the primal-dual solutions to
subgradients of the corresponding value function. This yields a
version of parts (b) and (d) in \cref{thm:gen_dual_frame} that are
specialized to gauge duality.

\begin{theorem}[Gauge multipliers and
  sensitivity] \label{thm:gauge_opt_pert} The following relationships
  hold for the gauge primal-dual pair \eqref{eq:gauge-primal} and
  \eqref{eq:gauge-dual}.
  \begin{enumerate}[(a)]
  \item \label{it:du_solns_gauge}
    If the primal is relatively strictly feasible
    and the dual is feasible, then the set of optimal solutions for
    the dual is nonempty and coincides with
    \[
      \partial p(0) = \partial({-}1/v_p)(0).
    \]
    If it is further assumed that the primal is strictly feasible, then the
    set of optimal solutions to the dual is bounded.

  \item \label{it:pr_solns_gauge}
    If the dual is relatively strictly feasible and
    the primal is feasible, then the set of optimal solutions for the primal
    is nonempty with solutions
    $x^* = w^*/\lambda^*$, where
    \[
      (w^*, \lambda^*) \in \partial v_d(0,0)
      \textt{and}
      \lambda^*>0.
    \]
    If it is further assumed that the dual is strictly feasible, then the set of optimal solutions to the primal is bounded.
\end{enumerate}
\end{theorem}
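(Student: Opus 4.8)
The plan is to read both conclusions directly off \cref{thm:gen_dual_frame}, whose parts are phrased in terms of the value functions $p$ and $q=v_d$ built in \cref{seubsec:deriv_gauge}. For part~(\ref{it:du_solns_gauge}), I would first invoke \cref{prop:strict_feas}: relative strict feasibility of the primal yields $0\in\ri\dom p$. Since the dual is feasible, $q(0,0)=\nu_d$ is finite (and nonnegative, being an infimum of $\kappa^\circ\ge 0$), so \cref{thm:gen_dual_frame}(b) applies and delivers two things at once: the infimum defining $\nu_d$ is attained, so the dual solution set is nonempty, and $\partial p(0)=\argmax_y -F^{\star}(0,0,y)$. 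The computation in \cref{seubsec:deriv_gauge} shows $F^{\star}(0,0,\cdot)$ equals $\kappa^\circ(A\T y)$ on the dual-feasible set and $+\infty$ otherwise, so $\argmax_y -F^{\star}(0,0,y)$ is exactly the optimal set of \eqref{eq:gauge-dual}; combining this with $p=-1/v_p$ from \cref{obs:simp} gives the stated equality $\partial p(0)=\partial(-1/v_p)(0)$. For boundedness under the stronger hypothesis, \cref{prop:strict_feas} upgrades the conclusion to $0\in\inter\dom p$, and since $p(0)=-\nu_d$ is finite, \cref{thm:gen_dual_frame}(c) forces $\argmax_y -F^{\star}(0,0,y)$ to be bounded.

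For part~(\ref{it:pr_solns_gauge}) I would argue symmetrically through the second half of \cref{thm:gen_dual_frame}(b). Relative strict feasibility of the dual gives $0\in\ri\dom v_d=\ri\dom q$ by \cref{prop:strict_feas}. Feasibility of the primal together with \cref{thm:gauge-duality}(a) forces $\nu_d>0$ (were $\nu_d=0$ the primal would be infeasible), so $q(0,0)=\nu_d$ is finite. Then \cref{thm:gen_dual_frame}(b) yields $p(0)=-\nu_d$, the attainment of $p(0)=\inf_{\lambda\ge0,\,w}F(w,\lambda,0)$, and the identification $\partial v_d(0,0)=\argmin_{(w,\lambda)}F(w,\lambda,0)$, recalling that $x$ is identified with $(w,\lambda)$.

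The remaining and most delicate step is to convert a minimizer $(w^*,\lambda^*)$ of $F(\cdot,\cdot,0)$ into a solution of \eqref{eq:gauge-primal}. On the feasible set $F(w,\lambda,0)=-\lambda$, so every minimizer satisfies $-\lambda^*=p(0)=-\nu_d$, whence $\lambda^*=\nu_d>0$; this is the point that requires care, since the infimum in $p$ ranges over $\lambda\ge 0$ and the degenerate value $\lambda^*=0$ must be excluded. With $\lambda^*>0$ in hand, the constraints encoded by $W$ and $(\epi\rho)\times\Uscr_\kappa$ read $\rho(\lambda^* b-Aw^*)\le\sigma\lambda^*$ and $\kappa(w^*)\le 1$; dividing by $\lambda^*$ and using positive homogeneity of $\rho$ and $\kappa$ shows that $x^*:=w^*/\lambda^*$ is primal feasible with $\kappa(x^*)\le 1/\lambda^*=1/\nu_d=\nu_p$, the last equality being strong duality (\cref{thm:gauge-duality}(c)). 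Feasibility forces $\kappa(x^*)\ge\nu_p$, so $x^*$ is optimal. Finally, for boundedness under strict feasibility of the dual, \cref{prop:strict_feas} gives $0\in\inter\dom q$ and \cref{thm:gen_dual_frame}(d) makes $\argmin_{(w,\lambda)}F(w,\lambda,0)$ bounded; since $\lambda^*\equiv\nu_d$ is constant across all minimizers, the map $(w^*,\lambda^*)\mapsto w^*/\lambda^*$ is a fixed rescaling and preserves boundedness, so the primal optimal set is bounded as well.
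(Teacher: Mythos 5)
Your proposal is correct and follows essentially the same route as the paper's proof: \cref{prop:strict_feas} to place $0$ in $\ri\dom p$ (resp.\ $\ri\dom v_d$), then \cref{thm:gen_dual_frame}(b)--(d) together with \cref{obs:simp} and \cref{thm:gauge-duality} to identify the optimal sets with $\partial p(0)$ and $\partial v_d(0,0)$, exclude $\lambda^*=0$, and obtain boundedness; you simply fill in more detail than the paper does (e.g.\ the explicit verification that $x^*=w^*/\lambda^*$ is feasible and optimal, and pinning $\lambda^*=\nu_d$ via $p(0)=-\nu_d$ rather than via $p(0)=-1/\nu_p$). The only nitpick is that in part (b) the bound $\nu_d<+\infty$ comes from the dual's (relative strict) feasibility rather than from primal feasibility as your wording suggests, but this is immediate from your hypotheses.
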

\begin{proof}
  Part (a). Because~\eqref{eq:gauge-primal} is relatively strictly
  feasible, it follows from \cref{prop:strict_feas} that
  $0\in\ri\dom p$, and because the dual is feasible, $p(0)$ is
  finite. \cref{thm:gen_dual_frame} and \cref{obs:simp} then imply the
  conclusion of Part~(a).  The statement on the boundedness of the set
  of the optimal solutions to the dual follows from
  \cref{thm:gen_dual_frame}.

  Part (b). Because \eqref{eq:gauge-dual} is relatively strictly
  feasible, it follows from \cref{prop:strict_feas} that
  $0\in\ri\dom v_d$, and because the primal is feasible, $v_d(0)$ is
  finite. \cref{thm:gen_dual_frame} then implies that the optimal
  primal set is nonempty, and
  $\argmin_{w,\lambda}\, F(w,\lambda, 0) =\partial v_d(0,0)$.  Because
  the primal and dual problems are feasible, any pair
  $(w^*,\lambda^*) \in \argmin_{w,\lambda} F(w,\lambda, 0)$ must
  satisfy $\lambda^* >0$ by \cref{thm:gauge-duality} and
  \cref{obs:simp}. Thus, this inclusion is equivalent to
  $x^* = w^*/\lambda^*$ being optimal for the primal problem, with
  optimal value $1/\lambda^*$. This proves Part~(b).  The statement on
  the boundedness of the set of the optimal solutions to the primal
  again follows from \cref{thm:gen_dual_frame}.
\end{proof}

We use the sensitivity interpretation given by \cref{thm:gauge_opt_pert}  to develop a set of necessary and sufficient optimality conditions that mirror the more familiar KKT conditions
from Lagrange duality.
For a primal-dual optimal pair $(x^*, y^*)$, the condition $\rho^\circ(y^*)=0$
characterizes a degenerate case when $\sigma>0$ because in that case the primal
constraint is inactive at $x^*$ (i.e., $\rho(b-Ax^*) < \sigma$). On the other
hand, the dual constraint is always active at optimality because the positive
homogeneity of the dual objective and the dual constraint imply $\ip{b}{y^*} -
\sigma\rho^\circ(y^*) = 1$. The full primal-dual
optimality conditions for gauge duality are described in the following theorem.

\begin{theorem}[Optimality conditions]
  \label{thm:gauge-optimality-conds}
  Suppose both problems of the gauge dual pair \cref{eq:gauge-primal} and \cref{eq:gauge-dual} are relatively strictly feasible,
  and the pair $(x^*, y^*)$ is primal-dual
  feasible. Then $(x^*, y^*)$ is primal-dual optimal if and only if it
  satisfies the conditions
  \begin{subequations}
    \begin{align}
        \rho(b-Ax^*) &= \sigma \quad \text{or}  \quad \rho^\circ(y^*) = 0 & &\hbox{(primal activity)}  \label{eq:pr-feas}
      \\ \ip{b}{y^*} - \sigma\rho^\circ(y^*) &= 1 &&\hbox{(dual activity)} \label{eq:du-feas}
      \\ \ip{x^*}{A\T y^*} &= \kappa(x^*)\cdot\kappa^\circ(A\T y^*) &&\hbox{(objective alignment)} \label{eq:objalign}
\\ \ip{b-Ax^*}{y^*} &= \sigma\rho^\circ(y^*). &&\hbox{(constraint alignment)}
\label{eq:consalign}
    \end{align}
  \end{subequations}
\end{theorem}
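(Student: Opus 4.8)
The plan is to reduce the stated equivalence to a single telescoping chain of inequalities running from the primal objective down to the constant $1$, and then to read off each of the four conditions as the statement that one link in the chain is tight. Following the blanket assumptions, I would first invoke the replacement \eqref{eq:replacement} to work under $\sigma>0$, so that dual feasibility forces $\rho^\circ(y^*)$ finite, primal feasibility forces $\rho(b-Ax^*)\le\sigma$, and membership $x^*\in\dom\kappa$, $A\T y^*\in\dom\kappa^\circ$ makes every quantity below finite. Because both problems are relatively strictly feasible, \cref{thm:gauge-duality}(c) applies in both directions and yields strong duality $\nu_p\nu_d=1$ with $\nu_p,\nu_d$ positive and finite. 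Since $x^*$ and $y^*$ are feasible, $\kappa(x^*)\ge\nu_p$ and $\kappa^\circ(A\T y^*)\ge\nu_d$, so the product obeys $\kappa(x^*)\,\kappa^\circ(A\T y^*)\ge\nu_p\nu_d=1$; moreover $(x^*,y^*)$ is optimal if and only if this product equals $1$, because equality in a product of two quantities each bounded below by a positive constant forces equality in each factor.

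The crux is then the estimate
\[
\kappa(x^*)\,\kappa^\circ(A\T y^*)\ \ge\ \ip{x^*}{A\T y^*}\ =\ \ip{b}{y^*}-\ip{b-Ax^*}{y^*}\ \ge\ \ip{b}{y^*}-\rho(b-Ax^*)\,\rho^\circ(y^*)\ \ge\ \ip{b}{y^*}-\sigma\rho^\circ(y^*)\ \ge\ 1,
\]
where the first inequality is the polar-gauge inequality \eqref{eq:holder-ineq} for $\kappa$ at $x^*$ and $A\T y^*$ (using $\ip{x^*}{A\T y^*}=\ip{Ax^*}{y^*}$), the second is the polar-gauge inequality for $\rho$ at $b-Ax^*$ and $y^*$, the third combines $\rho(b-Ax^*)\le\sigma$ with $\rho^\circ(y^*)\ge0$, and the last is dual feasibility. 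Together with the previous paragraph, this shows that $(x^*,y^*)$ is optimal precisely when every one of these four inequalities holds with equality.

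It remains to match the four equalities with the four stated conditions. Tightness of the first inequality is exactly the objective-alignment condition \eqref{eq:objalign}, and tightness of the last is the dual-activity condition \eqref{eq:du-feas}. Tightness of the third, namely $(\rho(b-Ax^*)-\sigma)\rho^\circ(y^*)=0$, is the primal-activity condition \eqref{eq:pr-feas}. The step requiring the most care is the constraint-alignment condition \eqref{eq:consalign}: I would show that, under feasibility, $\ip{b-Ax^*}{y^*}=\sigma\rho^\circ(y^*)$ holds if and only if both the second and third inequalities are tight, via the sandwich $\sigma\rho^\circ(y^*)=\ip{b-Ax^*}{y^*}\le\rho(b-Ax^*)\,\rho^\circ(y^*)\le\sigma\rho^\circ(y^*)$. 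Consequently the conjunction of \eqref{eq:pr-feas}--\eqref{eq:consalign} is logically equivalent to the tightness of all four links, hence to optimality, establishing both directions at once. The main obstacle is thus bookkeeping rather than depth: keeping the redundancy between \eqref{eq:pr-feas} and \eqref{eq:consalign} transparent, and confirming that the degenerate $\sigma=0$ case is fully absorbed by the convention \eqref{eq:convention-sigma-zero}.
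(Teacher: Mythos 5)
Your proposal is correct, and it proves the theorem by a genuinely different route than the paper. For the sufficiency direction the two arguments essentially coincide: both reduce optimality to $\kappa(x^*)\cdot\kappa^\circ(A\T y^*)=1$ via strong duality (\cref{thm:gauge-duality}), and your telescoping chain collapses to give exactly that. The real divergence is in the necessity direction. The paper derives it from the perturbation framework: it writes optimality as $(0,0,y^*)\in\partial F(w^*,\lambda^*,0)$, invokes \cite[Theorem 23.9]{rockafellar} to compute the normal cone to $(\epi\rho)\times\Uscr_\kappa$, splits into cases according to whether the primal constraint is active or inactive, and extracts multipliers $\mu^*\ge 0$, $z^*\in\partial\rho(\lambda^*b-Aw^*)$ from which the four conditions follow. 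You replace all of that machinery by the observation that optimality forces $\kappa(x^*)\kappa^\circ(A\T y^*)=\nu_p\nu_d=1$, so every link in the chain
\[
\kappa(x^*)\,\kappa^\circ(A\T y^*)\ \ge\ \ip{x^*}{A\T y^*}\ \ge\ \ip{b}{y^*}-\rho(b-Ax^*)\,\rho^\circ(y^*)\ \ge\ \ip{b}{y^*}-\sigma\rho^\circ(y^*)\ \ge\ 1
\]
must be tight, and tightness of the links is precisely \eqref{eq:pr-feas}--\eqref{eq:consalign} (with the sandwich argument correctly handling the equivalence between \eqref{eq:consalign} and joint tightness of the two middle inequalities). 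Your approach buys brevity, symmetry (both implications fall out of one chain), and elementarity---it needs only the polar-gauge inequality and strong duality, no subdifferential calculus. What the paper's heavier route buys is thematic coherence with its perturbation framework and, as a byproduct, an explicit multiplier representation $y^*=\mu^* z^*$ with $z^*\in\partial\rho(\lambda^*b-Aw^*)$, tying dual solutions to subgradients of the constraint function. Two minor points: you should make explicit that primal-dual feasibility together with $\nu_d>0$ rules out the degenerate products (e.g.\ $\kappa(x^*)=+\infty$ against $\kappa^\circ(A\T y^*)=0$) so the chain consists of finite quantities; and your deferral of the $\sigma=0$ case to the replacement \eqref{eq:replacement} and convention \eqref{eq:convention-sigma-zero} is legitimate---it is exactly the same bookkeeping the paper performs at the end of its own proof.
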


\begin{proof}
  First suppose that $(\bar x,\bar y)$ satisfies
  \eqref{eq:pr-feas}-\eqref{eq:consalign}.
  By \cref{thm:gauge-duality}, to show that $(\bar x,\bar y)$ is
  primal-dual optimal it is sufficient to show that
  $\kappa(\bar x) \cdot \kappa^{\circ}(A\T\bar y)=1$.
  Add~\eqref{eq:objalign} and \eqref{eq:consalign} to obtain
  \[ \ip{b}{\bar y} = \kappa(\bar x) \cdot \kappa^{\circ}(A\T \bar y)
    +\sigma\rho^\circ(\bar y).
  \]
  By combining the above with \eqref{eq:du-feas} we obtain
  $\kappa(\bar x) \cdot \kappa^{\circ}(A\T\bar y)=1$, as desired.

  Suppose now that $(x^*, y^*)$ is primal-dual optimal.
  We begin by assuming that $\sigma>0$
  and obtain the case $\sigma=0$ by applying the result for the $\sigma>0$
  case under
  the replacement \eqref{eq:replacement}.
  By the
  positive homogeneity of $\kappa^\circ$ and the optimality of $y^*$,
  \eqref{eq:du-feas} holds.  Also note that $\kappa(x^*)$ and
  $\kappa^\circ(A\T y^*)$ are both nonzero and finite because of the
  strong duality guaranteed by \cref{thm:gauge-duality}.

Define $\lambda^* := 1/\kappa(x^*)$ and $w^*:=\lambda^*x^*,$ so that $\kappa(w^*)=1$. By \cref{thm:gen_dual_frame}(e) and \cref{thm:gauge_opt_pert}(b),
we must have
  $(0,0,y^*)\in \partial F(w^*, \lambda^*,0)$. %
  Since the primal problem is relatively strictly feasible, we can apply \cite[Theorem 23.9]{rockafellar}
  to deduce the characterization
  \begin{equation}
    \label{eq:subdiff}
    \partial F(w,\lambda, 0) =
    -{\begin{bmatrix} 0 \\ 1 \\ 0 \end{bmatrix}} +W\T \ncone_{(\epi
      \rho)\times \Uscr_k}
    \begin{pmatrix} \lambda b - Aw   \\ \sigma \lambda \\ w \end{pmatrix},
  \end{equation}
  where $\ncone_\Cscr(\cdot)$ denotes the normal cone to a set
  $\Cscr$.  We now consider two cases. First, suppose
  $\rho(\lambda^* b-A w^*)=\lambda^* \sigma.$ Then~\eqref{eq:pr-feas}
  holds, and by straightforward computations involving
  only~\eqref{eq:polar-sup} and the definitions of normal cones and
  subdifferentials, we have
  \begin{equation*}
   \ncone_{(\epi \rho)\times \Uscr_k}
    \begin{pmatrix} \lambda^* b - Aw^*   \\ \sigma \lambda^* \\ w^* \end{pmatrix}
    = \ncone_{\epi \rho}
      \begin{pmatrix} \lambda^* b - Aw^*   \\ \sigma \lambda^* \end{pmatrix} \times \ncone_{\Uscr_k}(w^*),
    \end{equation*}
    where
    \begin{equation*}
    \ncone_{\epi \rho}
    \begin{pmatrix} \lambda^* b - Aw^*   \\ \sigma \lambda^* \end{pmatrix}
     = \cone\left(\partial \rho (\lambda^* b - Aw^*)  \times \{-1\}\right)
    \end{equation*}
    and
    $ \ncone_{\Uscr_k}(w^*)= \set{ v| \kappa^\circ (v) \le \ip v {w^*}
    }$.  Substitute these formulas into \eqref{eq:subdiff} to obtain
    \begin{equation*}\label{eq:F sd}
      \partial F(w^*,\lambda^*, 0)
      =
      \Set{ \begin{bmatrix} v- \mu A\T z  \\ \mu(\ip{b}{z}-\sigma) - 1\\ \mu z \end{bmatrix} |
        \mu \ge 0, \;
        z \in \partial \rho (\lambda^* b - Aw^*), \;
        v\in\ncone_{\Uscr_\kappa}(w^*)
      }.
  \end{equation*}
  We deduce the existence of $z^* \in \partial \rho (\lambda^* b - Aw^*)$
  and $ \mu^*\ge 0$ such that
  \begin{subequations}
    \begin{align}
      y^*&= \mu^* z^* \label{eq:subdiff2} \\
      \mu^* (\ip {b }{z^*}  - \sigma) &=1 \label{eq:subdiff1} \\
      \kappa^\circ(\mu^* A\T z^*) &\le \ip{\mu^* A\T z^*}{w^*}. \label{eq:subdiff3}
    \end{align}
  \end{subequations}
  Note that $\mu^*=0$ cannot satisfy \eqref{eq:subdiff1}, hence
  \eqref{eq:subdiff3}, together with the polar-gauge inequality and
  the fact that $\kappa(w^*)=1$, implies
  \[
    \kappa^\circ(A\T y^*)\cdot \kappa(w^*) = \kappa^\circ(A\T y^*) \le \ip{A\T
      y^*}{w^*}\le \kappa^\circ(A\T y^*)\cdot \kappa(w^*).
  \]
  Equality must hold in the above, and dividing through by $\lambda^*>0$
  we see that \eqref{eq:objalign} is satisfied.  Finally, we aim to
  show that \eqref{eq:consalign} holds using the fact that
  $y^* \in \mu^* \partial \rho (\lambda^* b - Aw^*)$.  From the
  characterization~\eqref{eq:polar-sup} of the polar, we have
  \begin{equation}
    \label{eq: polar subdiff}
    \partial \rho (u)=\argmax_y\set{\ip {y}{u} | \rho^\circ(y)\le1}.
  \end{equation}
  In particular, this characterization implies
  $\ip{y^*/\mu^*}{\lambda^* b - Aw^*} \ge \ip{0}{\lambda^* b - Aw^*} =
  0.$ If $\rho(\lambda^* b - Aw^*)=0$, then by the polar-gauge
  inequality~\eqref{eq:holder-ineq} we have
  \[
    0 \le \ip {y^*}{\lambda^* b - A w^*} \le \rho(\lambda^* b - Aw^*)
    \cdot \rho^\circ(y^*) = 0,
  \]
  which gives condition~\eqref{eq:consalign} after dividing through by
  $\lambda^*$. On the other hand, if $\rho(u)>0$ then the
  set~\eqref{eq: polar subdiff} is given by
  $\set{y | \rho(u)=\ip{y}{u}, \, \rho^\circ(y)=1}$.
  Thus when $\rho(\lambda^* b - Aw^*)>0$, we again have
  $\ip {y^*/\mu^*}{\lambda^* b - A w^*} = \rho^\circ(y^*/\mu^*) \cdot
  \rho(\lambda^* b - Aw^*)$, and multiplying through by
  $\mu^*/\lambda^*$ and applying~\eqref{eq:pr-feas} gives~\eqref{eq:consalign}.

  We have shown the forward implication of the theorem when
  $\rho(\lambda^* b-A w^*)=\lambda^* \sigma.$ The other case we need
  to consider is when $\rho(\lambda^* b-A w^*)<\lambda^* \sigma,$ or
  equivalently when $\rho( b-A x^*)<\sigma$. An easy
  argument (e.g., see \cite[Proposition 2.14(iv)]{clarke_direct}) shows
\[ \ncone_{\epi \rho}(\lambda^*b-Aw^*, \lambda^*\sigma) = \ncone_{\dom \rho}(\lambda^*b-Aw^*) \times \{0\}. \]
Similar to the first case, we now have
\begin{equation*}\label{eq:F sd2}
  \partial F(w^*,\lambda^*, 0)
    =
    \Set{ \begin{bmatrix} v- A\T z  \\  \ip{b}{z} - 1\\  z \end{bmatrix} |
    z \in \ncone_{\dom \rho}(\lambda^*b-Aw^*), \;
    v\in\Nscr_{\Uscr_\kappa}(w^*)
    }.
  \end{equation*}
  We deduce that $y^* \in \ncone_{\dom \rho}(\lambda^*b-Aw^*)$ and
  also that $\ip b {y^*} =1$ and
  $\kappa^\circ(A\T y^*) \le \ip{ A\T y^*}{w^*}$. Again, because
  $\kappa(w^*)=1$, the polar-gauge inequality implies
  \eqref{eq:objalign} holds.

  We now show that $\rho^\circ(y^*) = 0$ and
  $\ip {b-Ax^*} {y^*} = 0$, which, if true, establishes \eqref{eq:pr-feas} and
  \eqref{eq:consalign} are satisfied as well.  First note
  that $y^* \in \ncone_{\dom \rho}(u)$ implies
  $y^* \in (\dom \rho)^\circ$, which  %
  implies
  $\rho^\circ(y^*)=0$ by \cref{eq:unit and zero} .
  Thus, by \eqref{eq:du-feas},
  \eqref{eq:objalign}, and the fact that
  $\kappa(x^*)\cdot\kappa^\circ(A\T y^*) =1$ from \cref{thm:gauge-duality},
  we have
  \[ \ip {b-Ax^*} { y^*} = 1- \kappa(x^*)\cdot\kappa^\circ(A\T y^*) =
    0. \] Thus if $(x^*,y^*)$ is primal-dual optimal, then
  \eqref{eq:pr-feas}-\eqref{eq:consalign} hold, as claimed.
  This finishes the proof for $\sigma>0$.

  Let us now consider the case when $\sigma=0$ and apply what we have
  just proved to the pair \eqref{eq:gauge-primal} and
  \eqref{eq:gauge-dual} under the replacement
  \eqref{eq:replacement}. Then $(x^*, y^*)$ is primal-dual optimal if
  and only if the conditions \eqref{eq:pr-feas}-\eqref{eq:consalign}
  hold with $(\rho,\sigma)=(\delta_{\Hscr_\rho},1)$, i.e.,
\begin{alignat*}{2}
\delta_{\Hscr_\rho^\circ}(y^*)&=0, &\qquad
\ip{x^*}{A\T y^*}&=\kappa(x^*)\cdot\kappa^\circ(A\T y^*),
\\
\ip{b}{y^*}-\delta_{\Hscr_\rho^\circ}(y^*)&=1, &\qquad
\ip{b-Ax^*}{y^*}&=0.
\end{alignat*}
If we combine this with primal feasibility, $\rho(b-Ax^*)=0$, and use the identity \eqref{eq:convention-sigma-zero} that $0\cdot\rho^\circ=\delta_{\Hscr_\rho^\circ}$ , then these conditions are
equivalent to  \eqref{eq:pr-feas}-\eqref{eq:consalign} for $\sigma=0$, $\rho$, and $\rho^\circ$ as written above.
\end{proof}

The following corollary describes a variation of the optimality
conditions outlined by \cref{thm:gauge-optimality-conds}. These
conditions assume that a solution $y^*$ of the dual problem is
available, and gives  conditions that can be used to determine
a corresponding solution of the primal problem.   An application of the following
result appears in \cref{sect:
  recovery_ex}.

\begin{corollary}[Gauge primal-dual recovery] \label{cor:recovery}
  Suppose that the primal-dual pair~\eqref{eq:gauge-primal}
  and~\eqref{eq:gauge-dual} are each relatively strictly feasible.
  If $y^*$ is
  optimal for \eqref{eq:gauge-dual}, then for any primal feasible
  $x$ the following conditions are equivalent:
\begin{enumerate}[{(a)}]
\item $x$ is optimal for \eqref{eq:gauge-primal};
\item $ \ip{x}{A\T y^*} = \kappa(x)\cdot\kappa^\circ(A\T y^*)$ and
$b-Ax  \in \partial(\sigma\rho^\circ)(y^*)$;
\item $A\T y^* \in \kappa^\circ(A\T y^*) \cdot \partial \kappa (x)$ and
$b-Ax  \in \partial(\sigma\rho^\circ)(y^*)$,
\end{enumerate}
where, by convention, $\sigma\rho^\circ=\delta_{\cl\dom\rho^\circ}$ when $\sigma=0$.
\end{corollary}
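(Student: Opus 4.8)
The plan is to derive the corollary directly from the optimality conditions of \cref{thm:gauge-optimality-conds}, after recording two preliminary facts. Since both problems are relatively strictly feasible, they are in particular feasible, so \cref{thm:gauge-duality} gives $\nu_p\nu_d=1$ with both values finite and nonzero; because $y^*$ is dual-optimal this yields $\kappa^\circ(A\T y^*)=\nu_d\in(0,\infty)$, which I will use in the (b)$\Leftrightarrow$(c) step. Moreover, as noted just before \cref{thm:gauge-optimality-conds}, positive homogeneity of the dual objective forces the dual-activity identity \eqref{eq:du-feas} to hold at any dual-optimal $y^*$. Consequently, pairing the given $y^*$ with a primal-feasible $x$, the statement ``$x$ is optimal for \eqref{eq:gauge-primal}'' is by definition the same as ``$(x,y^*)$ is a primal-dual optimal pair,'' and since the pair is primal-dual feasible, \cref{thm:gauge-optimality-conds} reduces this to the conjunction of primal activity \eqref{eq:pr-feas}, objective alignment \eqref{eq:objalign}, and constraint alignment \eqref{eq:consalign}.

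The heart of the argument is to recast the two clauses of (b) in terms of these conditions. The first clause of (b) is literally \eqref{eq:objalign}. For the second clause I would establish the equivalence, valid for every primal-feasible $x$,
\[
  b-Ax\in\partial(\sigma\rho^\circ)(y^*)\quad\Longleftrightarrow\quad \ip{b-Ax}{y^*}=\sigma\rho^\circ(y^*),
\]
i.e.\ the subgradient inclusion is exactly constraint alignment \eqref{eq:consalign}. This follows from the Fenchel--Young equality $v\in\partial h(y^*)\Leftrightarrow h(y^*)+h^\star(v)=\ip{v}{y^*}$ applied to $h=\sigma\rho^\circ$: its conjugate is the indicator of $\{v:\rho(v)\le\sigma\}$ when $\sigma>0$ and of $\Hscr_\rho$ when $\sigma=0$ (using the convention \eqref{eq:convention-sigma-zero} together with \eqref{eq:unit and zero}), and primal feasibility places $b-Ax$ in precisely that set, so that $h^\star(b-Ax)=0$ and Fenchel--Young collapses to the displayed equality. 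Primal activity \eqref{eq:pr-feas} then comes for free: when $\sigma=0$ primal feasibility already gives $\rho(b-Ax)=0=\sigma$, while for $\sigma>0$ combining \eqref{eq:consalign} with the polar-gauge inequality \eqref{eq:holder-ineq} gives $\sigma\rho^\circ(y^*)=\ip{b-Ax}{y^*}\le\rho(b-Ax)\,\rho^\circ(y^*)$, so either $\rho^\circ(y^*)=0$ or $\rho(b-Ax)\ge\sigma$, which with feasibility forces $\rho(b-Ax)=\sigma$. Thus (b) is equivalent to the full list \eqref{eq:pr-feas}--\eqref{eq:consalign}, establishing (a)$\Leftrightarrow$(b).

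For (b)$\Leftrightarrow$(c) the second clauses are identical, so it remains to show, using $c:=\kappa^\circ(A\T y^*)>0$ from the first paragraph, that \eqref{eq:objalign} is equivalent to $A\T y^*\in c\,\partial\kappa(x)$. Here I would use that $\kappa=\kappa^{\circ\circ}=\delta^\star_{\Uscr_{\kappa^\circ}}$ by \eqref{eq:polar-sup}, so the subdifferential of this support function is the exposed face $\partial\kappa(x)=\set{w\mid \kappa^\circ(w)\le 1,\ \ip{w}{x}=\kappa(x)}$. Since $c>0$, the inclusion $A\T y^*\in c\,\partial\kappa(x)$ is equivalent to $A\T y^*/c\in\partial\kappa(x)$; positive homogeneity makes the normalization $\kappa^\circ(A\T y^*/c)=1$ automatic, so the inclusion reduces to $\ip{A\T y^*/c}{x}=\kappa(x)$, i.e.\ $\ip{x}{A\T y^*}=\kappa^\circ(A\T y^*)\cdot\kappa(x)$, which is exactly \eqref{eq:objalign}.

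I expect the main obstacle to be the careful, uniform treatment of the $\sigma=0$ convention in the Fenchel--Young step: one must verify that $(\sigma\rho^\circ)^\star$ is the indicator of the correct set in both regimes and that primal feasibility genuinely forces $b-Ax$ into that set, so that the subgradient inclusion cleanly collapses to constraint alignment. The remaining ingredients---strong duality, the automatic dual activity, and the support-function subdifferential formula---are routine once this equivalence is secured.
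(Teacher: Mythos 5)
Your proposal is correct and takes essentially the same route as the paper: both reduce the claim to \cref{thm:gauge-optimality-conds} after noting that dual activity \eqref{eq:du-feas} is automatic, both identify the inclusion $b-Ax \in \partial(\sigma\rho^\circ)(y^*)$ with constraint alignment (and get primal activity for free from the polar-gauge inequality and feasibility), and both settle (b)$\Leftrightarrow$(c) by a subdifferential characterization of objective alignment. The only cosmetic differences are that you run the subgradient step through Fenchel--Young, whereas the paper uses the equivalent fact that $\partial(\sigma\rho^\circ)(y^*)$ consists of the maximizers of $\sup_{\rho(z)\le\sigma}\ip{y^*}{z}$, and you prove (b)$\Leftrightarrow$(c) via the exposed face of $\partial\kappa(x)$ rather than the paper's observation that alignment means $x$ minimizes $\kappa^\circ(A\T y^*)\,\kappa(\cdot)-\ip{\cdot}{A\T y^*}$.
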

\begin{proof}
  We use the optimality conditions given in
  \cref{thm:gauge-optimality-conds}. As noted before, by the optimality of $y^*$ we automatically have equality~\eqref{eq:du-feas} in the
  dual constraint.

  We first show that (b) implies (a).
  Suppose (b) holds. Then~\eqref{eq:objalign} holds automatically.
  From the characterization~\eqref{eq:polar-sup} of the polar, we have
  \begin{equation}\label{eq:max-element-rho}
    \sigma\rho^\circ(y^*) =
    \left.\begin{cases}
    \displaystyle\sigma\cdot\sup_{\mathclap{\rho(z)\le1}}\ \ip {y^*} z
     & \hbox{if  $\sigma >0$}\\
    \delta_{\cl\dom\rho^\circ}(y^*)
    & \hbox{if $\sigma =0$}
    \end{cases}
    \right\}
   = \sup_{\rho(z)\le\sigma}\ip {y^*} z,
  \end{equation}
  where the case $\sigma=0$ uses the
  convention~\eqref{eq:convention-sigma-zero}.  Thus,
  $\partial (\sigma \rho^\circ) (y^*)$ is the set of maximizing
  elements in this supremum.  Because
  $b-Ax \in \partial(\sigma \rho^\circ)(y^*)$, it holds that
  $\rho(b-Ax)\le\sigma$. If we additionally use the polar-gauge
  inequality, we deduce that
  \[
    \sigma\rho^\circ(y^*)
    =   \ip{y^*}{b-Ax}
    \le \rho(b-Ax)\cdot\rho^\circ(y^*)
    \le\sigma\rho^\circ(y^*),
  \]
  and therefore the above inequalities are all tight.  Thus
  conditions~\eqref{eq:pr-feas} and~\eqref{eq:consalign} hold, and by
  \cref{thm:gauge-optimality-conds}, $(x,y)$ is a primal-dual optimal
  pair.

  We next show that (a) implies (b). Suppose that $x$ is optimal for
  \eqref{eq:gauge-primal}. Then the first condition of (b) holds by
  (\ref{eq:objalign}), and (\ref{eq:pr-feas}) and (\ref{eq:consalign})
  combine to give us
  \[
    \sigma \rho^\circ (y^*) = \rho(b-Ax)\cdot\rho^\circ(y^*) = \ip{b-Ax}{y^*}.
  \]
  This implies that $z:=b-Ax$ is a maximizing element of the supremum
  in~\eqref{eq:max-element-rho}, and thus
  $b-Ax \in  \partial(\sigma\rho^\circ) (y^*).$

  Finally, to show the equivalence of (b)
  and (c), note that by the polar-gauge inequality,
  $ \ip{x}{A\T y^*} = \kappa(x)\cdot\kappa^\circ(A\T y^*)$ if and only if
  $x$ minimizes the convex function
  $\kappa^\circ(A\T y^*)\,\kappa(\cdot) -\ip{\cdot}{A\T y^*}.$ This, in turn, is true if and only if
  $0 \in \kappa^\circ(A\T y^*)\, \partial \kappa(x) - A\T y^*$,
  or equivalently, $A\T y^* \in \kappa^\circ(A\T y^*) \cdot \partial \kappa (x)$.
\end{proof}

\subsection{The relationship between Lagrange and gauge multipliers} \label{sect: gauge_fenchel}

We now use the perturbation framework for duality to establish a
relationship between gauge dual and Lagrange dual variables. We begin
with an auxiliary result that characterizes the subdifferential of the
perspective function \eqref{def:perspective2}.
Combettes \cite[Prop.~2.3(v)]{Combettes2016} also describes an equivalent formula for the subdifferential, though the derivation and subsequent form of the expression are very different. The formula in \cref{lemma:subdiff} is more suitable for our purposes.

\begin{lemma}[Subdifferential of perspective
  function] \label{lemma:subdiff} Let $g: \Rn \to \Rbar$ be a closed
  proper convex function. Then for $(x,\mu)\in\dom g^\pi$, equality holds:
  \[
    \sd g^\pi(x,\mu)=
    \begin{cases}
      \set{(z,-g^\star(z))\,\mid\, z\in\sd g(x/\mu)}&\mbox{if $\mu>0$}\\
      \set{(z,\gam)\,\mid\, (z,-\gam)\in\epi g^\star,\ z\in\sd g^\infty(x)}&\mbox{if $\mu=0$.}
    \end{cases}
  \]
\end{lemma}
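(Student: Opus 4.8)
The plan is to compute the conjugate $(g^\pi)^\star$ in closed form and then read off the subdifferential from the Fenchel--Young equality. Since $g$ is closed, proper, and convex, its perspective $g^\pi$ is again closed, proper, and convex (one checks properness by evaluating $g^\pi(x_0,1)=g(x_0)<\infty$ at any $x_0\in\dom g$), so for $(x,\mu)\in\dom g^\pi$ the membership $(z,\gamma)\in\sd g^\pi(x,\mu)$ is equivalent to the equality $g^\pi(x,\mu)+(g^\pi)^\star(z,\gamma)=\ip{z}{x}+\gamma\mu$ by \cite[Theorem 23.5]{rockafellar}.

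First I would evaluate $(g^\pi)^\star(z,\gamma)=\sup_{x,\lambda}\{\ip{z}{x}+\gamma\lambda-g^\pi(x,\lambda)\}$. On the region $\lambda>0$ the change of variables $w=x/\lambda$ turns the inner supremum into $\sup_{\lambda>0}\lambda\,(g^\star(z)+\gamma)$, which equals $0$ when $g^\star(z)+\gamma\le 0$ and $+\infty$ otherwise. The $\lambda=0$ contribution is $(g^\infty)^\star(z)=\delta_{\cl\dom g^\star}(z)$, and it is subsumed by the previous condition because $g^\star(z)+\gamma\le 0$ already forces $z\in\dom g^\star$. Hence
\[
(g^\pi)^\star(z,\gamma)=\delta_{\epi g^\star}(z,-\gamma),
\]
an indicator function, as one expects since $g^\pi$ is sublinear.

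Substituting this into the Fenchel--Young equality imposes two demands: $(z,-\gamma)\in\epi g^\star$ together with $g^\pi(x,\mu)=\ip{z}{x}+\gamma\mu$. When $\mu>0$ I divide the latter by $\mu$ to get $g(x/\mu)=\ip{z}{x/\mu}+\gamma$; combining this with the constraint $g^\star(z)\le-\gamma$ and the universally valid inequality $g(x/\mu)+g^\star(z)\ge\ip{z}{x/\mu}$ pins down $\gamma=-g^\star(z)$ and forces equality in Fenchel--Young, i.e.\ $z\in\sd g(x/\mu)$; this is the first branch. When $\mu=0$ the term $\gamma\mu$ vanishes and the equality reduces to $g^\infty(x)=\ip{z}{x}$; since $g^\infty=\delta^\star_{\dom g^\star}$ by \cite[Theorem 13.3]{rockafellar} is the support function of $\cl\dom g^\star$, this is exactly the statement $z\in\sd g^\infty(x)$, while the surviving constraint $(z,-\gamma)\in\epi g^\star$ keeps $\gamma$ as the free epigraphical parameter; this is the second branch.

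The main obstacle lies in the $\mu=0$ branch, where the scalar equality $g^\infty(x)=\ip{z}{x}$ must be identified with the membership $z\in\sd g^\infty(x)$. This step requires the support-function representation of the recession function and some care regarding $\dom g^\star$ versus its closure: the subdifferential of $\delta^\star_{\cl\dom g^\star}$ at $x$ is the face of $\cl\dom g^\star$ exposed by $x$, yet the accompanying constraint $(z,-\gamma)\in\epi g^\star$ already places $z$ in $\dom g^\star\subseteq\cl\dom g^\star$, so no exposed point is lost and the two conditions remain consistent. Everything else is routine bookkeeping with the Fenchel--Young (in)equalities.
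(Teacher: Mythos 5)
Your proposal is correct and takes essentially the same route as the paper: both proofs rest on identifying $g^\pi$ and the indicator of $\Cscr=\set{(z,\gamma) \mid g^\star(z)\le-\gamma}$ as a conjugate pair and then reading off $\sd g^\pi(x,\mu)$ as the set of maximizers of $\ip{(z,\gamma)}{(x,\mu)}$ over $\Cscr$---the paper does this via the support-function representation $g^\pi=\delta_\Cscr^\star$ (Rockafellar, Cor.~13.5.1) and the argmax formula for subdifferentials of support functions, while you compute $(g^\pi)^\star$ by hand and invoke the equivalent Fenchel--Young equality. The two case analyses ($\mu>0$ and $\mu=0$) then proceed identically, including your correct handling of $\dom g^\star$ versus $\cl\dom g^\star$ in the $\mu=0$ branch.
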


\begin{proof}
  Recall that the subdifferential of the support function to any
  nonempty closed convex set $\Cscr$ is given by
  $ \sd \delta_\Cscr^\star (x)=\argmax\set{\ip{z}{x}|z\in \Cscr}$
  \cite[Theorem 23.5 and Corollary 23.5.3]{rockafellar}.  By
  \cite[Corollary 13.5.1]{rockafellar}, $g^\pi = \delta_\Cscr^{\star}$,
  where $ \Cscr = \set{ (z,\gamma) | g^\star(z) \le -\gamma} $ is a
  closed convex set. If $(x,\mu)\in\dom g^\pi$, then $\Cscr$ is
  nonempty and
  \begin{equation*}
      \sd g^\pi(x,\mu)=
      \argmax_{(z,\gamma)\in\Cscr}\set{\ip{(x,\mu)}{(z,\gam)}}
      =
      \argmax_{(z,\gamma)\in\Cscr}\set{\ip{z}{x}+\mu\gam}.
  \end{equation*}
  Suppose now that $\mu >0$. Then
  \begin{equation}\label{eq:1}
    \sup_{(z,\gamma)\in\Cscr}\set{\ip{z}{x}+\mu\gam}
    \ =\ 
    \sup_{\mathclap{z\in\dom{g^\star}}}\,\set{\ip{z}{x}-\mu g^\star(z)}
    =
    \mu\cdot\sup_{\mathclap{z\in\dom{g^\star}}}\,\set{\ip{z}{x/\mu}-g^\star(z)}.
    \end{equation}
Using the expression for the subdifferential of a support function,
$(z,\gam)$ achieves the supremum of~\eqref{eq:1}
if $z\in\sd g(x/\mu)$ and $-\gam =g^\star(z)$.  On the other hand, if
$\mu=0$ then
\[
  \sup_{(z,\gamma)\in\Cscr}\,\ip{z}{x}
  =\ \sup_{\mathclap{z\in\dom{g^\star}}}\ \ip{z}{x}=\delta^{\star}_{\dom{g^\star}}(x)
=g^\infty(x).
\]
Again using the expression for the subdifferential of a support
function, $(z,\gam)$ achieves the supremum of~\eqref{eq:1} if
and only if $z\in\sd g^\infty(x)$ and $(z,-\gam)\in\epi{g^\star}$.
\end{proof}

We now state the main result relating the optimal solutions of \eqref{eq:gauge-primal} to the optimal solutions of the Lagrange dual of \eqref{eq:gauge-dual}.

\begin{theorem} \label{thm:lagrange} Suppose that the gauge dual
  \eqref{eq:gauge-dual} is relatively strictly feasible and the primal
  \eqref{eq:gauge-primal} is feasible. Let $\lagrange$ denote the
  Lagrange dual of~\eqref{eq:gauge-dual}, and let $\nu\L$ denote its
  optimal value.  Then
\[
  z^* \text{ is optimal for } \lagrange
  \ \iff\
  \mbox{$z^*/\nu\L$  is optimal for \eqref{eq:gauge-primal}}.
  \]
\end{theorem}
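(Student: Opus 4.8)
The plan is to place $\lagrange$, the Lagrange dual of the gauge dual \eqref{eq:gauge-dual}, inside the very perturbation framework of \cref{seubsec:deriv_gauge} that produced \eqref{eq:gauge-dual}, and then read off the solution correspondence from the sensitivity result \cref{thm:gauge_opt_pert}. First I would write \eqref{eq:gauge-dual} as the unconstrained problem $\min_y \kappa^\circ(A\T y)+\delta_{\Fscr_d}(y)$ and compute its Fenchel--Rockafellar dual as a maximization over a single variable $z\in\R^n$ carrying a $\kappa$-ball constraint. The one nontrivial ingredient is the support function of the superlevel set $\Fscr_d=\set{y | \ip{b}{y}-\sigma\rho^\circ(y)\ge1}$: dualizing its single scalar constraint with a multiplier $t\ge0$ produces a perspective of $(\sigma\rho^\circ-\ip{b}{\cdot})^\star$, and it is exactly here that \cref{lemma:subdiff} is needed to track subgradients and, crucially, the optimal perspective parameter $t$. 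The outcome is the closed form
\[
  \lagrange:\qquad \maximize{z,\,t\ge0}\ t \quad\st\quad \kappa(z)\le1,\ \rho(tb-Az)\le t\sigma,
\]
which one recognizes as $\sup_{w,\lambda}-F(w,\lambda,0)$ for the pairing function $F$ of \cref{seubsec:deriv_gauge}, with the variable $(z,t)$ playing the role of $(w,\lambda)$.

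Second, I would establish strong duality for the pair $\eqref{eq:gauge-dual}$--$\lagrange$. Under the stated hypotheses---\eqref{eq:gauge-dual} relatively strictly feasible and \eqref{eq:gauge-primal} feasible---\cref{prop:strict_feas} gives $0\in\ri\dom v_d$, so \cref{thm:gen_dual_frame}(b) applies to the value function $v_d$ and yields both $\nu\L=\val\lagrange=v_d(0,0)=\nu_d$ and the identification of the optimal set of $\lagrange$ with $\argmin_{w,\lambda}F(w,\lambda,0)=\partial v_d(0,0)$. Because \eqref{eq:gauge-primal} is feasible and \eqref{eq:gauge-dual} relatively strictly feasible, \cref{thm:gauge-duality}(c) gives $\nu_p\nu_d=1$, so that $\nu\L=\nu_d=1/\nu_p$ is finite and strictly positive.

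Third, I would convert this subgradient description into the claimed scaling. By \cref{thm:gauge_opt_pert}(b) every element $(w^*,\lambda^*)\in\partial v_d(0,0)$ has $\lambda^*>0$ and yields a primal-optimal point $x^*=w^*/\lambda^*$, and conversely every primal solution arises this way. Since the objective of $\lagrange$ is the coordinate $t=\lambda$, every optimal solution has $\lambda^*=\nu\L$. Writing $z^*$ for the $\R^n$-component $w^*$ of an optimal solution then gives $z^*/\nu\L=w^*/\lambda^*=x^*$, which is primal-optimal; conversely, a primal solution $x^*$ lifts to the optimal pair $(\nu\L x^*,\nu\L)\in\partial v_d(0,0)$, so that $z^*=\nu\L x^*$ is optimal for $\lagrange$. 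This is precisely the stated equivalence.

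The step I expect to be the main obstacle is the first one: producing $\lagrange$ in closed form and, in particular, certifying that the optimal perspective parameter $t$ is strictly positive, so that $x^*=w^*/\lambda^*$ is well defined. This positivity is exactly the degeneracy that \cref{lemma:subdiff} is built to control---its $\mu=0$ branch encodes the recession behaviour that must be excluded---and it is the relative strict feasibility of \eqref{eq:gauge-dual} together with the blanket assumption $\rho(b)>\sigma$ that rule it out through $\nu_d=1/\nu_p\in(0,\infty)$.
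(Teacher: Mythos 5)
Your proposal is correct under the stated hypotheses, but it takes a genuinely different route from the paper's proof. The paper never computes $\lagrange$ in closed form: it defines $\lagrange$ through the value function $h(w)=\inf_y\{\kappa^\circ(A\T y+w)+\delta_{\Fscr_d}(y)\}$, observes the perspective identity $v_d(t,\theta)=(1+\theta)\,h\bigl(t/(1+\theta)\bigr)$ for $\theta$ near $0$, and then uses \cref{lemma:subdiff} together with Fenchel--Young to conclude $\partial v_d(0,0)=\partial h(0)\times\{\nu\L\}$, after which the scaling drops out of \cref{thm:gauge_opt_pert}(b). You instead dualize the scalar constraint defining $\Fscr_d$ to obtain the explicit lifted form $\max\{t : \kappa(z)\le 1,\ \rho(tb-Az)\le t\sigma,\ t\ge 0\}$, recognize it as $\sup_{w,\lambda}-F(w,\lambda,0)$, and hence identify its optimal set with $\argmin_{w,\lambda}F(w,\lambda,0)=\partial v_d(0,0)$ directly from \cref{thm:gen_dual_frame}(b); the $\lambda$-coordinate is pinned to $\nu\L$ trivially because it \emph{is} the objective, where the paper needs Fenchel--Young for the same fact. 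Your route is arguably more transparent---it exhibits $\lagrange$ as the gauge primal in lifted form, i.e.\ biduality $F^{\star\star}=F$ made concrete---and it bypasses \cref{lemma:subdiff} entirely. What it costs is the bridge between the paper's $\lagrange$, a problem in $z$ alone, and your lifted problem in $(z,t)$: you need the exact, attained representation $-\delta^{\star}_{\Fscr_d}(-Az)=\max\{t\ge 0 : \rho(tb-Az)\le t\sigma\}$, a support-function-of-a-sublevel-set computation that genuinely requires a constraint qualification (the forward direction of your equivalence fails without attainment of the inner maximum). Relative strict feasibility of \eqref{eq:gauge-dual} supplies the needed Slater-type condition, so the claim is true, but the correct tool is classical Lagrangian duality with dual attainment (e.g.\ \cite[Theorem 28.2]{rockafellar}); \cref{lemma:subdiff}, whose subdifferential formula plays no natural role in your argument, is the one misattribution in an otherwise sound outline.
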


\begin{proof}
  We first note that $\lagrange$ can be derived via the framework of
  \cref{thm:gen_dual_frame} through the Lagrangian value function
  \[
    h (w) = \inf_y \Set{ \kappa^\circ (A\T y + w) +
      \delta_{\ip{b}{\cdot} -\sigma \rho^\circ (\cdot) \ge 1} (y)}.
  \] Here $h$ plays the role of $p$ in \cref{thm:gen_dual_frame};
  cf. \cite[Example 11.41]{rockafellarwets}.  
  Strong duality in~\cref{thm:gauge-duality}
  guarantees that $h(0)$ is nonzero and finite, and by
  \cref{prop:strict_feas},
  \[
    \eqref{eq:gauge-dual} \text{ relatively strictly feasible } \implies (0,0) \in
    \ri\dom v_d \implies 0 \in \ri\dom h.
  \]
  Thus, it follows from \cref{thm:gen_dual_frame} that the optimal
  points $z^*$ for $\lagrange$ are characterized by
  $z^* \in \partial h(0)$. Note also that $h(0) = \nu_L$.

  On the other hand, by \cref{thm:gauge_opt_pert}(b) the solutions to
  \eqref{eq:gauge-primal} are precisely the points
  $w^*/\lambda^*$ such that
  $(w^* ,\lambda^*) \in \partial v_d(0,0)$. Thus to relate the
  solution sets of $\lagrange$ and \eqref{eq:gauge-primal}, we must
  relate $\partial h(0)$ and $\partial v_d(0,0)$.

  For $\theta$ in a neighborhood of zero and all $t$, by positive homogeneity of
  $\kappa^\circ$ and $\rho^\circ$ we have
  \[ v_d(t,\theta) = (1+\theta)h\left(\frac{t}{1+\theta}\right) =
    \inf_y \left\{ (1+\theta) \kappa^\circ \left(A\T y +
        \frac{t}{1+\theta}\right) + \delta_{\ip{b}{\cdot} -\sigma
        \rho^\circ (\cdot) \ge 1} (y) \right\} .
  \]
  Thus by \cref{lemma:subdiff},
  $ \partial v_d(0,0) = \set{ (z, -h^\star(z)) | z \in \partial
    h(0)}.$ However, for $z \in \partial h(0)$ the Fenchel-Young
  equality gives us
  \[ 0 = \ip 0 z = h^\star(z) + h(0) = h^\star(z) + \nu_L. \] Thus we
  obtain the convenient description
  \[ \partial v_d(0,0) = \partial h(0) \times \{ h(0)\} = \partial
    h(0) \times \{ \nu_L\} \] and the set of optimal solutions for
  \eqref{eq:gauge-primal} is precisely
  $\frac{1}{\nu_L} \partial h(0)$.
\end{proof}

\section{Perspective duality}\label{sec:perspective-duality}

We now move on to an extension of the gauge duality framework, which
allows us to consider functions that are not necessarily positively
homogeneous, but continue to be nonnegative and convex. 
(The same framework applies to functions that are bounded
below because these can be made nonnegative by translation.) For
the remainder of the paper, consider functions $f:\Rn\to\Rbar_+$ and
$g:\R^m\to\Rbar_+$, that are closed, convex and nonnegative over their
domains. In this section we derive and analyze the
\emph{perspective-dual} pair
\begin{alignat}{4}
  \label{eqn:target}
  &\minimize{x} &\quad &f(x) &\quad&\st\quad && g(b-Ax)\le\sigma,
  \tag{\hbox{N$_p$}}
\\
  \label{eqn:persp_dual}
  &\minimize{y,\ \alpha, \ \mu}
  &\quad& f^\pp( A\T y , \alpha)
 &\quad&\st\quad
 &&\ip b y  - \sigma g^\pp (y, \mu) \ge 1 - (\alpha+\mu).
 \tag{\hbox{N$_d$}}
\end{alignat}
The functions $f^\pp$ and $g^\pp$ are the polars of the perspective
transforms of $f$ and $g$. This transform is a key operation needed to
derive perspective duality. In the next section we describe properties
of that transform and its application to the derivation of the
perspective-dual pair. Throughout this section, we assume that
$\sigma>\inf_u g(u)\ge 0$.

\subsection{Perspective-polar transform}

Given a closed proper convex function $f:\Rn\to\Rbar_+$, define the
perspective-polar transform by $f^\pp:=(f^{\pi})^\circ$.%

An explicit characterization of the perspective-polar transform is
given by
\begin{equation}\label{eq:persp-polar-explicit}
  f^\pp(z, -\xi)
  =\inf\set{\mu>0|\ip z x \le \xi + \mu f(x),\,\forall x}.
\end{equation}
This representation can be obtained by applying the definition of the
gauge polar \eqref{eq:polar-gauge-def} to the perspective transform as
follows:
\begin{align*}
f^\pp(z,-\xi)
  &=\inf\set{\mu>0|\ip z x - \xi\lambda\le\mu f^\pi(x,\lambda),\, \forall
    x, \forall\lambda}
\\&=\inf\set{\mu>0|\ip z x - \xi\lambda\le\mu\lambda
  f(x/\lambda),\,\forall x,\forall\lambda>0}
\\&=\inf\set{\mu>0|\ip{z}{\lambda x} - \xi\lambda\le\mu\lambda
    f(x),\,\forall x,\forall\lambda>0},
\end{align*}
which yields~\eqref{eq:persp-polar-explicit} after dividing through by
$\lambda$. Rockafellar's extension \cite[p.136]{rockafellar} of the
polar gauge transform to nonnegative convex functions that vanish at
the origin coincides with $f^\pp(z,-1)$.

The following theorem provides an alternative characterization of the
perspective-polar transform in terms of the more familiar Fenchel
conjugate $f^\star$. It also provides an expression for the
perspective-polar of $f$ in terms of the Minkowski function generated
by the epigraph of the conjugate of $f$, i.e.,
\[
  \gamma_{\epi f^\star}(x,\tau) := \inf \set{\lambda > 0 \mid (x,\tau) \in \lambda \epi f^\star},
\]
which is a gauge. Nonnegativity of $f$ is not required for the first
part of this result.

\begin{theorem} \label{thm: epi} For any closed proper convex function
  $f$ with $0 \in \dom f$, we have
  $f^{\pi\star}(z, -\xi) = \delta_{\epi f^\star}(z, \xi).$ If, in
  addition, $f$ is nonnegative,
  $f^\pp(z, -\xi) = \gauge{\epi f^\star}(z, \xi)$.
\end{theorem}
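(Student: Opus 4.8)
The plan is to route both identities through the support-function representation of the perspective transform already used in the proof of \cref{lemma:subdiff}: by Rockafellar's \cite[Cor.~13.5.1]{rockafellar}, which applies since $f$ is closed proper convex, $f^\pi=\delta^\star_\Cscr$ is the support function of the \emph{closed convex} set
\[
  \Cscr:=\set{(z,\gamma)\mid (z,-\gamma)\in\epi f^\star},
\]
i.e.\ the image of $\epi f^\star$ under the linear involution $(z,\tau)\mapsto(z,-\tau)$. The standing assumption $0\in\dom f$ secures that $f^\pi$ is proper, so this representation is nondegenerate, and $\Cscr$ is nonempty because $f^\star$ is proper. Both halves of the theorem then follow by applying, respectively, the conjugate and the polar to $\delta^\star_\Cscr$ and unwinding the coordinate flip.

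For the first identity I would simply conjugate. Since $\Cscr$ is nonempty, closed, and convex, $\delta_\Cscr$ is closed proper convex and self-biconjugate, so $f^{\pi\star}=(\delta^\star_\Cscr)^\star=\delta_\Cscr^{\star\star}=\delta_\Cscr$. Setting the second argument to $\gamma=-\xi$, the defining condition $(z,-\gamma)\in\epi f^\star$ becomes $(z,\xi)\in\epi f^\star$, whence $f^{\pi\star}(z,-\xi)=\delta_\Cscr(z,-\xi)=\delta_{\epi f^\star}(z,\xi)$, as claimed.

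For the second identity, nonnegativity of $f$ is what turns $f^\pi$ into a bona fide gauge (it is already positively homogeneous and vanishes at the origin, and $f\ge0$ makes it nonnegative), so that the polar $f^\pp=(f^\pi)^\circ$ is the right object. Crucially, $f\ge0$ also gives $f^\star(0)=-\inf_x f(x)\le0$, hence $(0,0)\in\epi f^\star$ and therefore $0\in\Cscr$. With $\Cscr$ a closed convex set containing the origin, the standard gauge polarity---the unit level set of $\delta^\star_\Cscr$ is the polar set $\Cscr^\circ$, so by \eqref{eq:unit and zero} the unit level set of its polar is $\Cscr^{\circ\circ}=\Cscr$, and a closed gauge is the Minkowski gauge of its own unit level set (\eqref{eq:polar-sup} and $\kappa^{\circ\circ}=\kappa$ from \cite[Thm.~15.1]{rockafellar})---yields $(\delta^\star_\Cscr)^\circ=\gamma_\Cscr$, the Minkowski gauge of $\Cscr$. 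Finally, transporting $\gamma_\Cscr$ back through the flip, $(z,\gamma)\in\lambda\Cscr$ iff $(z,-\gamma)\in\lambda\,\epi f^\star$, so $\gamma_\Cscr(z,\gamma)=\gauge{\epi f^\star}(z,-\gamma)$; evaluating at $\gamma=-\xi$ gives $f^\pp(z,-\xi)=\gauge{\epi f^\star}(z,\xi)$.

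The one genuinely delicate step, and the one I would verify most carefully, is the passage $(\delta^\star_\Cscr)^\circ=\gamma_\Cscr$: it relies on the bipolar identity $\Cscr^{\circ\circ}=\Cscr$, which in turn requires $0\in\Cscr$. This is exactly where nonnegativity of $f$ is indispensable, and it deserves attention in the boundary case $\inf_x f(x)=0$ (equivalently $f^\star(0)=0$), where the origin sits on the boundary of $\Cscr$ rather than its interior, so one cannot appeal to any interiority shortcut.
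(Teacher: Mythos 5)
Your proof is correct, but it reaches the first identity by a different route than the paper. For $f^{\pi\star}(z,-\xi)=\delta_{\epi f^\star}(z,\xi)$, the paper computes the conjugate directly: a chain of supremum manipulations in which the $\lambda=0$ slice is collapsed into the $\lambda>0$ supremum via $f^\pi(x,0)=\liminf_{\lambda\to0^+}f^\pi(x,\lambda)$, which is where the hypothesis $0\in\dom f$ and the citation \cite[Corollary 8.5.2]{rockafellar} enter. You instead invoke the support-function representation $f^\pi=\delta^\star_\Cscr$ from \cite[Corollary 13.5.1]{rockafellar} --- the very fact the paper uses later in the proof of \cref{lemma:subdiff} --- and then get the identity by one biconjugation, $f^{\pi\star}=\delta_\Cscr^{\star\star}=\delta_\Cscr$, plus the coordinate flip. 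This is more structural and makes the two halves of the theorem visibly two faces of one representation; the paper's computation is more self-contained at that point in the text and exposes exactly where $0\in\dom f$ is used. For the second identity the two arguments essentially coincide: both hinge on the observation that nonnegativity of $f$ forces $f^\star(0)\le0$, so the relevant set contains the origin, after which support-function/Minkowski-gauge polarity applies; the paper simply cites \cite[Corollary 15.1.2]{rockafellar}, whereas you re-derive that polarity from the bipolar theorem, \eqref{eq:polar-sup}, \eqref{eq:unit and zero}, and \cite[Theorem 15.1]{rockafellar} --- a legitimate, slightly longer path, and you correctly flag $0\in\Cscr$ as the step where nonnegativity is indispensable.
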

\begin{proof}
Because of the assumptions on $f$, we  have $f^\pi (x,0) = \liminf_{\lambda \to 0^+} f^\pi (x, \lambda)$ for each $x \in \Rn$ \cite[Corollary 8.5.2]{rockafellar}. Thus we obtain the following chain of equalities:
\[
\begin{aligned}
  f^{\pi\star}(z, -\xi)
  &= \sup \set{ \langle z,x \rangle - \lambda\xi - f^\pi(x, \lambda) | x \in \Rn,\, \lambda \in \R } \\
  &=\sup \set{ \ip z x  - \lambda\xi - \lambda f(\lambda^{-1}x) |
    x\in\Rn,\ \lambda > 0 }
\\&=\sup \set{ \ip{z}{\lambda y} - \lambda\xi - \lambda f(y) | y \in
  \Rn,\, \lambda > 0 }
\\&=\sup \set{ \lambda \cdot \textstyle\sup_{y} \set{\ip z y - \xi - f(y)}|\lambda>0}
\\&=\sup \set{ \lambda (f^\star(z) - \xi)|\lambda>0}
   =\delta_{\epi f^\star} (z, \xi).
\end{aligned}
\]
This proves the first statement. Now additionally suppose that $f$ is
nonnegative. Because $f^\pi$ is closed, it is identical to its
biconjugate, and so
$f^\pi(x, \lambda) = \delta_{\epi f^\star} ^\star (x, -\lambda)$.
Also, $\epi f^\star$ is closed and convex, and contains the origin
because $f$ is nonnegative.  Therefore, it follows
from \cite[Corollary 15.1.2]{rockafellar} that
\[
  f^\pp(z,-\xi)
  \equiv f^{\pi \circ}(z, -\xi)
  = \delta_{\epi f^\star}^{\star \circ} (z, \xi) = \gauge{\epi f^\star} (z, \xi).
\]
\end{proof}

The following result relates the level sets of the perspective-polar
transform to the level sets of the conjugate perspective. This result is
useful in deriving the constraint sets for certain perspective-dual
problems for which there is no closed form for the perspective polar;
cf.~\cref{sect: logistic}.

\begin{theorem}[Level-set equivalence] \label{thm: starpi}
Let $f:\Rn\to\Rbar_+$ be a 
nonnegative, closed proper convex function with $0 \in \dom f$. 
Then, for any $(z,\xi,\mu)\in\R^n\times\R\times\R$,
\[
  f^\pp(z, \xi) \le \mu
  \quad\iff\quad
  [\ 0\le \mu \ \mbox{ and }\ f^{\star \pi} (z, \mu) \le -\xi\ ].
\]
\end{theorem}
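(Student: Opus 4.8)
The plan is to reduce the statement to the Minkowski-gauge description of the perspective polar furnished by the preceding theorem and then simply read off that gauge's level sets. First I would invoke \cref{thm: epi}: since $f$ is nonnegative, closed, proper, and $0\in\dom f$, the set $C:=\epi f^\star$ is closed, convex, and contains the origin (because $f^\star(0)=-\inf f\le 0$, so $(0,0)\in\epi f^\star$), and moreover $f^\pp(z,-\xi)=\gauge{\epi f^\star}(z,\xi)$. Replacing $\xi$ by $-\xi$ gives the working identity $f^\pp(z,\xi)=\gauge{\epi f^\star}(z,-\xi)$, so the left-hand inequality $f^\pp(z,\xi)\le\mu$ is exactly $\gamma_C(z,-\xi)\le\mu$.

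Second, I would record the elementary level-set description of the gauge of a closed convex set containing the origin: for $\mu>0$ one has $\gamma_C(v)\le\mu\iff v\in\mu C$ (closedness of $C$ is what lets us drop any spurious closure), while the zero-level set is precisely the recession cone, $\set{v\mid\gamma_C(v)\le 0}=\set{v\mid\gamma_C(v)=0}=C^\infty$. This last fact is \eqref{eq:unit and zero} applied to the gauge $\gamma_C$, whose unit sublevel set is the closed set $C$. In particular $\gamma_C\ge 0$, so $f^\pp(z,\xi)\le\mu$ already forces $\mu\ge 0$, and the regime $\mu<0$ is trivial because both sides are then false.

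Third, and this is where the two branches of the perspective transform \eqref{def:perspective2} come in, I would match these level sets against $f^{\star\pi}(z,\mu)$. For $\mu>0$ the perspective is $f^{\star\pi}(z,\mu)=\mu f^\star(z/\mu)$, so $f^{\star\pi}(z,\mu)\le -\xi$ unwinds to $f^\star(z/\mu)\le -\xi/\mu$, i.e.\ $(z,-\xi)\in\mu\,\epi f^\star=\mu C$, which by the second step is exactly $\gamma_C(z,-\xi)\le\mu$. For $\mu=0$ the perspective returns its recession value $f^{\star\pi}(z,0)=(f^\star)^\infty(z)$, and since the recession cone of an epigraph is the epigraph of the recession function, $(\epi f^\star)^\infty=\epi (f^\star)^\infty$, the condition $f^{\star\pi}(z,0)\le-\xi$ becomes $(z,-\xi)\in C^\infty$, which is $\gamma_C(z,-\xi)\le 0$. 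Assembling the three regimes $\mu<0$, $\mu=0$, $\mu>0$ delivers the claimed equivalence.

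I expect the only genuinely delicate point to be the boundary case $\mu=0$: one must verify that the $\lambda=0$ branch of \eqref{def:perspective2}, which returns the recession function, lines up with the zero-level set of $\gamma_C$, and this hinges on the identification $(\epi f^\star)^\infty=\epi (f^\star)^\infty$ together with the closedness of $\epi f^\star$. Away from that boundary the argument is a direct substitution of the perspective formula into the gauge level-set description, so no essential difficulty remains.
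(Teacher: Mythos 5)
Your proof is correct. It shares the paper's launch point---\cref{thm: epi}, giving $f^\pp(z,\xi)=\gauge{\epi f^\star}(z,-\xi)$ with $C:=\epi f^\star$ closed, convex, and containing the origin---but the second half is genuinely different. The paper keeps working with the quantity $\inf\set{\lambda>0 | f^{\star\pi}(z,\lambda)\le-\xi}$ and must prove that this infimum being $\le\mu$ is equivalent to the condition holding at $\lambda=\mu$ itself: its forward direction splits into the case where the infimum is strictly below $\mu$ (using nonnegativity of $f$, which makes $\lambda\mapsto(\lambda f)^\conj(z)$ nonincreasing) and the case where it equals $\mu$ (using lower semicontinuity of $f^{\star\pi}$), while its reverse direction at $\mu=0$ runs a sequential argument through $\epi f^{\star\pi}$ combined with closedness of $f^\pp$ and the characterization \eqref{eq:persp-polar-explicit}. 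You bypass all of that by identifying the sublevel sets of $\gamma_C$ exactly---$\set{v | \gamma_C(v)\le\mu}=\mu C$ for $\mu>0$, by closedness of $C$, and $\set{v | \gamma_C(v)\le 0}=C^\infty$, by \eqref{eq:unit and zero} applied to the closed gauge $\gamma_C$---and then matching these sets against the two branches of \eqref{def:perspective2} via the identity $\epi\bigl((f^\star)^\infty\bigr)=(\epi f^\star)^\infty$. Your route buys a shorter, sequence-free argument in which every attainment and boundary issue is absorbed into two standard set identities; the paper's route avoids invoking the recession-cone-of-an-epigraph fact and instead makes explicit where nonnegativity of $f$ and closedness of $f^\pp$ do the work. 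Both treat the delicate $\mu=0$ case soundly, yours structurally and the paper's by a limiting argument.
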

\begin{proof}
The following chain of equivalences follows from \cref{thm: epi}:
\begin{align}
\begin{split}
f^\pp(z, \xi) \le \mu
& \iff \gamma_{\epi f^\star}(z,-\xi) \le \mu \\
 & \iff \inf\set{\lambda > 0| (z,-\xi) \in \lambda \epi f^\star   } \le \mu  \\
 & \iff \inf\set{\lambda > 0| f^\star(z/\lambda)\le -\xi/ \lambda } \le \mu \\
 & \iff \inf\set{\lambda > 0| f^{\star \pi}(z, \lambda)\le -\xi   } \le \mu.
 \label{eq: ppinf}
 \end{split}
 \end{align}
Define $\alpha = \inf\set{\lambda > 0| f^{\star \pi}(z, \lambda)\le -\xi}$.

We first show that $f^\pp(z, \xi) \le \mu$ implies $0\le \mu$ and  $f^{\star \pi} (z, \mu) \le -\xi$.
By \eqref{eq: ppinf}, $0\le\alpha\le \mu$. 
If
$\alpha <\mu$, there exists $\lambda$ with $0 < \lambda < \mu$ such
that $f^{\star \pi}(z, \lambda)\le -\xi.$ Because $f$ is nonnegative,
$\mu f \ge \lambda f$, and thus $(\mu f)^\conj \le (\lambda f)^\conj$.
In particular,
\[
  f^{\star \pi}(z, \mu) = (\mu f)^\conj (z) \le (\lambda f)^\conj(z) =
  f^{\star \pi}(z, \lambda)\le -\xi.
\]
On the other hand, if $\alpha = \mu$, there exists a sequence
$\lambda_k \to \mu$ such that $ f^{\star \pi}(z, \lambda_k) \le - \xi$
for each $k$.  Now by the lower semi-continuity of $f^{\star \pi}$, we
obtain
\begin{equation*}\label{eq:11}
  f^{\star \pi}(z, \mu) \le \liminf_{k \to \infty} f^{\star \pi}(z, \lambda_k) \le - \xi.
\end{equation*}
This establishes the forward implication of the theorem.

For the reverse implication, suppose $0 \le \mu$ and
$f^{\star \pi} (z, \mu) \le -\xi$.  If $0 < \mu$, it follows
from~\eqref{eq: ppinf} that $f^\pp(z, \xi) \le \mu$. Now suppose
otherwise that $\mu=0.$ We want to show $f^\pp(z, \xi) \le 0$.  By
hypothesis,
$(z,0,-\xi) \in \epi f^{\star \pi}$. Thus there
exists a sequence $(z_k, \mu_k, r_k)$ with
$\lim_{k \to \infty} (z_k, \mu_k, r_k) = (z, 0, -\xi)$ and
$f^{\star\pi}(z_k, \mu_k) \le r_k$ for all $k$.  With no loss in
generality, we can assume that $\mu_k >0$ for all $k$. Then for each
$k$, we have $\mu_k f^\star(z_k/\mu_k) \le r_k$ for which we have the
following equivalences:
\begin{align*}
\mu_k f^\star(z_k/\mu_k) \le r_k 
& \iff \sup_w \set{ \ip{w}{z_k} - \mu_k f(w) } \le r_k \\
& \iff \ip{w}{z_k}  \le  r_k+ \mu_k f(w), \, \forall w \in \Rn \\
& \iff \mu_k \ge \inf \set{ \lambda >0 |\ip{w}{z_k}  \le  r_k+ \lambda f(w), \, \forall w \in \Rn } \\
&\underset{\eqref{eq:persp-polar-explicit}}{\iff} \mu_k \ge f^\pp(z_k,-r_k),
\end{align*}
which gives $f^\pp(z,\xi)\le 0=\mu$ in the limit, since $f^\pp$ is closed.
\end{proof}

\subsubsection{Calculus rules} \label{sec:perspective-polar-examples}
Two useful calculus rules are now developed that govern the
perspective-polar transform when applied to gauge functions and
separable sums.

\begin{example}[Gauge functions]
  \label{ex: gauge} Suppose that $f$ is a closed proper gauge. Then
  \begin{equation*}
    f^\pp (z, \xi) = f^\circ(z) + \delta_{\R_-} (\xi).
  \end{equation*}
  Use expression~\eqref{eq:persp-polar-explicit} for this
  derivation. When $\xi >0$, take $x=0$ in the infimum
  in~\eqref{eq:persp-polar-explicit} to deduce that
  $f^\pp (z, \xi) = + \infty.$ On the other hand, when $\xi \le 0$,
  the positive homogeneity of $f$ implies that
  $f^\pp(z, \xi) = f^\circ(z)$. We leave the details to the reader.
  More generally, if $f$ vanishes at the origin, then
  $f^\pp (z, \xi) = + \infty$ for all $\xi >0.$

\end{example}

\begin{example}[Separable sums] \label{ex: sums} %
  Suppose that $f(x) := \sum_{i=1}^n f_i(x_i),$ where each convex
  function $f_i: \R^{n_i} \to \Rbar_+$ is nonnegative. Then a
  straightforward computation shows that
  $f^\pi (x, \lambda) = \sum_{i=1}^n f_i^\pi(x_i, \lambda)$. Furthermore,
  taking into account \cite[Proposition 2.4]{gaugepaper}, which
  expresses the polar of a separable sum of gauges, we deduce
  \[
    f^\pp(z, \xi) = \max_{i=1, \ldots, n}~ f_i^\pp(z_i, \xi).
  \]
\end{example}

\subsection{Derivation of the perspective dual via lifting} \label{sect: derive}

We now derive the relationship between the primal and dual problems
\eqref{eqn:target} and \eqref{eqn:persp_dual} by lifting
\eqref{eqn:target} to an equivalent gauge optimization problem, and
then recognizing \eqref{eqn:persp_dual} as its gauge dual.

\begin{theorem}[Gauge lifting of the primal] \label{thm: reform} A
  point $x^*$ is optimal for~\eqref{eqn:target} if and only if
  $(x^*, 1)$ is optimal for the gauge problem
  \begin{equation} \label{eqn:prim_equiv}
    \minimize{x, \lambda} \quad f^{\pi}(x, \lambda)
    \quad\st\quad
    \rho
    \left(\begin{bmatrix} b \\ 1 \\ 1 \end{bmatrix}
      -\begin{bmatrix}
        A & 0\\
        0 & 1\\
        0 &  0
      \end{bmatrix}
      \begin{bmatrix}x \\ \lambda\end{bmatrix}
    \right)\leq \sigma,
\end{equation}
where
$\rho(z, \mu, \tau) := g^\pi(z,\tau) + \delta_{\{0\}}(\mu)$ is a gauge function.
\end{theorem}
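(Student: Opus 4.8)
The plan is to show that, once evaluated, the block constraint in~\eqref{eqn:prim_equiv} collapses to the two conditions $\lambda=1$ and $g(b-Ax)\le\sigma$, at which point the lifted objective $f^{\pi}(x,\lambda)$ reduces to $f(x)$; the claimed equivalence then follows by direct comparison with~\eqref{eqn:target}. The whole argument is bookkeeping, so I will organize it as (i) check that $\rho$ is a closed gauge, so that~\eqref{eqn:prim_equiv} is genuinely an instance of~\eqref{eq:gauge-primal}; (ii) substitute into the constraint and read off that feasibility forces $\lambda=1$; and (iii) match objectives and feasible points.

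First I would verify that $\rho(z,\mu,\tau):=g^{\pi}(z,\tau)+\delta_{\{0\}}(\mu)$ is a closed gauge. The only non-immediate ingredient is that the perspective $g^{\pi}$ is itself a closed gauge. By construction~\eqref{def:perspective2} its epigraph is a cone, so $g^{\pi}$ is positively homogeneous; nonnegativity of $g$ (and hence of $g^{\infty}$) gives $g^{\pi}\ge 0$; the identity $g^{\pi}(0,0)=g^{\infty}(0)=0$ gives vanishing at the origin; and closedness is built into the definition~\eqref{def:perspective2}. Since $\delta_{\{0\}}(\cdot)$ is the indicator of a subspace, it too is a closed gauge, and adding it merely intersects each sublevel set of $g^{\pi}$ with the closed hyperplane $\{\mu=0\}$, so the sublevel sets of $\rho$ are closed. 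Nonnegativity, positive homogeneity, vanishing at the origin, and convexity are all preserved under the sum, so $\rho$ is a closed gauge.

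Next I would substitute into the constraint. The argument of $\rho$ in~\eqref{eqn:prim_equiv} equals $(b-Ax,\,1-\lambda,\,1)$, so
\[
  \rho(b-Ax,\,1-\lambda,\,1)=g^{\pi}(b-Ax,\,1)+\delta_{\{0\}}(1-\lambda).
\]
The term $\delta_{\{0\}}(1-\lambda)$ vanishes when $\lambda=1$ and is $+\infty$ otherwise, so every feasible point of~\eqref{eqn:prim_equiv} has $\lambda=1$; and because the perspective parameter is $\tau=1>0$, definition~\eqref{def:perspective2} gives $g^{\pi}(b-Ax,1)=g(b-Ax)$. Hence the feasible set of~\eqref{eqn:prim_equiv} is exactly $\{(x,1)\mid g(b-Ax)\le\sigma\}$, and on this set the objective is $f^{\pi}(x,1)=f(x)$.

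Finally, comparing with~\eqref{eqn:target}, the map $x\mapsto(x,1)$ is a bijection between the feasible set of~\eqref{eqn:target} and that of~\eqref{eqn:prim_equiv} that preserves objective values (including the value $+\infty$ when $x\notin\dom f$). Therefore $(x^{*},1)$ minimizes~\eqref{eqn:prim_equiv} if and only if $x^{*}$ minimizes~\eqref{eqn:target}, which is the assertion. I do not expect a genuine obstacle here: the only point requiring care is the verification in step (i) that $g^{\pi}$ is a gauge even though $g$ is merely nonnegative rather than positively homogeneous, which is precisely the structural role the perspective transform plays in lifting~\eqref{eqn:target} to a gauge problem.
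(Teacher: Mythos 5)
Your proof is correct and takes essentially the same route as the paper: substitute to see that the lifted constraint equals $g^{\pi}(b-Ax,1)+\delta_{\{0\}}(1-\lambda)\le\sigma$, which forces $\lambda=1$ and $g(b-Ax)\le\sigma$, and then match objectives via $f^{\pi}(x,1)=f(x)$. The only addition is your step (i) verifying that $\rho$ is a closed gauge, a claim the paper asserts in the theorem statement without proof; this is a worthwhile check but not a different argument.
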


\begin{proof}
  By definition of $f^{\pi}$, $x^*$ is optimal for~\eqref{eqn:target}
  if and only if the pair $(x^*, 1)$ is optimal for
  \[
    \minimize{x,\lambda} \quad f^\pi(x,\ \lambda)
    \quad\st\quad \lambda=1,\ g^{\pi}(b-Ax,1)\leq \sigma.
  \]
The following equivalence follows from the definition of $\rho$:
\[
   [\,  \lambda=1 \quad \mbox{and} \quad g^\pi (b-Ax, 1) \le \sigma\, ]
   \quad
   \iff\quad  \rho (b-Ax, 1-\lambda, 1)  \le \sigma.
\]
Thus we arrive at the constraint expressed in~\eqref{eqn:prim_equiv}.
\end{proof}

\begin{corollary}[Gauge dual]\label{cor:gauge_dual_lift}
  Problem~\eqref{eqn:persp_dual} is the gauge dual of \eqref{eqn:prim_equiv}.
\end{corollary}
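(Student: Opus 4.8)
The plan is to read off the data of the gauge program \eqref{eqn:prim_equiv} and apply the gauge-dual template \eqref{eq:gauge-dual} verbatim, then simplify the resulting objective and constraint until they match \eqref{eqn:persp_dual}. Writing $\tilde x := (x,\lambda)$, the problem \eqref{eqn:prim_equiv} is exactly of the form \eqref{eq:gauge-primal} with gauge objective $\kappa := f^{\pi}$ on $\R^{n+1}$, the coefficient matrix $\tilde A$ displayed in \eqref{eqn:prim_equiv} mapping $\R^{n+1}\to\R^{m+2}$, right-hand side $\tilde b := (b,1,1)$, and gauge $\rho(z,\mu,\tau) = g^{\pi}(z,\tau) + \delta_{\{0\}}(\mu)$. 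Since $f^{\pi}$ and $\rho$ are closed gauges, \eqref{eq:gauge-dual} applies directly. I would introduce the dual variable $\tilde y := (y,s,t)\in\R^m\times\R\times\R$ and write the gauge dual as $\minimize{\tilde y} \kappa^{\circ}(\tilde A\T \tilde y)$ subject to $\ip{\tilde b}{\tilde y} - \sigma\rho^{\circ}(\tilde y)\ge 1$.

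For the objective, transposing $\tilde A$ gives $\tilde A\T(y,s,t) = (A\T y,\, s)$, so the dual objective becomes $\kappa^{\circ}(\tilde A\T\tilde y) = f^{\pi\circ}(A\T y, s) = f^{\pp}(A\T y, s)$, using the definition $f^{\pp} := (f^{\pi})^{\circ}$. The remaining work is to compute $\rho^{\circ}$. Since $\rho$ is a separable sum of the gauge $g^{\pi}$ acting on the coordinates $(z,\tau)$ and the gauge $\delta_{\{0\}}$ acting on the coordinate $\mu$, the polar rule \cite[Proposition 2.4]{gaugepaper} for separable sums yields $\rho^{\circ}(y,s,t) = \max\{\, g^{\pi\circ}(y,t),\ \delta_{\{0\}}^{\circ}(s)\,\}$. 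The unit level set of $\delta_{\{0\}}$ is $\{0\}$, so its support-function characterization \eqref{eq:polar-sup} gives $\delta_{\{0\}}^{\circ}\equiv 0$; because $g^{\pp} = g^{\pi\circ}$ is nonnegative, the maximum collapses to $\rho^{\circ}(y,s,t) = g^{\pp}(y,t)$. Substituting this together with $\ip{\tilde b}{\tilde y} = \ip b y + s + t$ into the dual constraint produces $\ip b y + s + t - \sigma g^{\pp}(y,t)\ge 1$.

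To finish, I would set $\alpha := s$ and $\mu := t$, which turns the objective into $f^{\pp}(A\T y,\alpha)$ and the constraint into $\ip b y - \sigma g^{\pp}(y,\mu)\ge 1-(\alpha+\mu)$; this is precisely \eqref{eqn:persp_dual}. The only genuine subtlety—and the step I would verify most carefully—is the polar computation: correctly applying the separable-sum rule across the interleaved coordinate blocks and confirming $\delta_{\{0\}}^{\circ}\equiv 0$. It is exactly the vanishing of this polar that leaves the variable $s$ unconstrained in $\rho^{\circ}$ and thereby recovers the free perturbation variable $\alpha$ of \eqref{eqn:persp_dual}; everything else is bookkeeping of the transpose and the pairing $\ip{\tilde b}{\tilde y}$. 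Because the standing assumption $\sigma>\inf_u g(u)\ge 0$ forces $\sigma>0$, no $\sigma=0$ convention is needed in this computation.
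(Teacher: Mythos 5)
Your proposal is correct and follows essentially the same route as the paper's proof: instantiate the canonical gauge-dual template \eqref{eq:gauge-dual} for the lifted problem, compute $\rho^{\circ}$ via the separable-sum polar rule of \cite[Proposition 2.4]{gaugepaper}, and use $\delta_{\{0\}}^{\circ}\equiv 0$ (together with nonnegativity of $g^{\pp}$) to collapse the maximum. Your write-up is merely more explicit about the transpose, the pairing $\ip{\tilde b}{\tilde y}$, and why the max collapses—details the paper leaves implicit.
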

\begin{proof}
  It follows from the canonical dual pairing~\eqref{eq:gauge-primal}
  and~\eqref{eq:gauge-dual} that the gauge dual
  of~\eqref{eqn:prim_equiv} is
  \begin{equation} \label{eqn:persp_dual_lift}
    \begin{array}{ll}
      \minimize{y,\alpha, \mu} &
           f^{\pi \circ}
                \left(
                  \begin{bmatrix}
                    A^T & 0 & 0\\
                    0 & 1 & 0
                  \end{bmatrix}
                  \begin{bmatrix}
                    y\\ \alpha\\ \mu
                  \end{bmatrix}
                \right)
  \\[15pt]\st &
      \ip{(y,\alpha,\mu)}{(b,1,1)} - \sigma \rho^{\circ}(y, \alpha, \mu) \ge 1.
    \end{array}
\end{equation}
Because $\rho$ is separable in $(z,\mu)$ and $\beta$, it follows from
\cite[Proposition 2.4]{gaugepaper} that
\[
  \rho^\circ (y, \alpha, \mu)
  = \max \set{g^{\pi \circ} (y, \mu) ,\ \delta_{\{0\}}^\circ(\alpha)}.
\]
Since $\delta_{\{0\}}^\circ(\alpha)$ is identically zero, the result follows.
\end{proof}

The next result generalizes the gauge duality result of
\cref{thm:gauge-duality} to the case where $f$ and $g$ are convex and nonnegative
but not necessarily gauges. We parallel the construction
in~\eqref{gauge feasible sets}, and for this section only redefine the
feasible sets by
\begin{align*}
  \Fscr_p &:=\set{u|g(b-u) \le \sigma}
\\\Fscr_d &:=\set{(y,\alpha,\mu)| \ip b y -\sigma g^\pp(y,\mu)\geq 1-(\alpha + \mu) }.
\end{align*}
Thus, \eqref{eqn:target} is \emph{relatively strictly feasible} if
\[
  A^{-1} \ri \Fscr_p \cap (\ri \dom f )\ne \emptyset.
\]
Similarly, \eqref{eqn:persp_dual} is \emph{relatively strictly
  feasible} if there exists a triple $(y,\alpha,\mu)$ such that
\[
  (A\T y,\alpha)\in\ri \dom f^\pp \quad \text{and} \quad (y,\alpha,\mu) \in \ri \Fscr_d.
\] {\em Strict feasibility} follows the same definitions, where the
operation $\ri$ is replaced by $\inter$.  

\begin{theorem}[Perspective duality]
  \label{thm:per_duality}
  Let $\nu_p$ and $\nu_d$, respectively, denote the optimal values of
  the pair \eqref{eqn:target} and \eqref{eqn:persp_dual}.  Then the
  following relationships hold for the perspective dual pair
  \eqref{eqn:target} and \eqref{eqn:persp_dual}.
\begin{enumerate}[\rm{(a)}]
\item (Basic Inequalities) It is always the case that
\[
\mbox{(\textit{i}) $(1/\nu_p)\leq \nu_d\ $  and \ (\textit{ii}) $(1/\nu_d)\le \nu_p$.}
\]
Thus, $\nu_p=0$ and $\nu_d=0$, respectively, imply that
\eqref{eqn:persp_dual} and \eqref{eqn:target} are infeasible.

\item (Weak duality) If $x$ and $(y,\alpha,\mu)$ are primal and dual feasible, then
  \begin{equation*}
    1\leq \nu_p\nu_d \le  f(x)\cdot f^\pp(A\T y,\alpha).
  \end{equation*}
\item (Strong duality) If the dual (resp. primal) is feasible and the
  primal (resp. dual) is relatively strictly feasible, then
  $\nu_p\nu_d = 1$ and the perspective dual (resp. primal) attains its
  optimal value.
\end{enumerate}
\end{theorem}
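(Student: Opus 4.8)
The plan is to deduce the entire theorem from the gauge duality theorem \cref{thm:gauge-duality} applied to the lifted gauge pair \eqref{eqn:prim_equiv} and \eqref{eqn:persp_dual}, exploiting the lifting already established in \cref{thm: reform} and \cref{cor:gauge_dual_lift}. First I would record the optimal-value correspondence. Since feasibility of \eqref{eqn:prim_equiv} forces $\lambda=1$ through the $\delta_{\{0\}}$ term in $\rho$, and $f^\pi(x,1)=f(x)$, the optimal value of the gauge primal \eqref{eqn:prim_equiv} equals $\nu_p$; by \cref{cor:gauge_dual_lift} the optimal value of its gauge dual is exactly $\nu_d$. Thus the pair $(\nu_p,\nu_d)$ coincides with the gauge primal-dual value pair for \eqref{eqn:prim_equiv}, and \cref{thm: reform} guarantees that optimal points transfer: $x^*$ solves \eqref{eqn:target} if and only if $(x^*,1)$ solves \eqref{eqn:prim_equiv}.

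With this identification, parts (a) and (b) are immediate. Applying \cref{thm:gauge-duality}(a) to \eqref{eqn:prim_equiv} gives $(1/\nu_p)\le\nu_d$ and $(1/\nu_d)\le\nu_p$ verbatim, together with the stated infeasibility consequences. For weak duality (b), a primal feasible $x$ for \eqref{eqn:target} corresponds to the gauge-feasible point $(x,1)$, and I would evaluate the gauge weak-duality bound using $f^\pi(x,1)=f(x)$ and the identity, read off from \eqref{eqn:persp_dual_lift}, that the lifted adjoint sends $(y,\alpha,\mu)$ to $(A\T y,\alpha)$, so that $\kappa^\circ$ of the lifted dual point is $f^\pp(A\T y,\alpha)$. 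This turns the gauge bound $1\le\nu_p\nu_d\le\kappa(\cdot)\cdot\kappa^\circ(\cdot)$ into $1\le\nu_p\nu_d\le f(x)\cdot f^\pp(A\T y,\alpha)$. A small preliminary computation, namely $\rho^\circ(y,\alpha,\mu)=\max\set{g^\pp(y,\mu),0}=g^\pp(y,\mu)$ (using that $g^\pp$ is a gauge, hence nonnegative, and $\delta_{\{0\}}^\circ\equiv 0$), also confirms that the gauge dual feasible region of \eqref{eqn:prim_equiv} is exactly the set $\Fscr_d$ defined before the theorem.

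For the strong duality assertion (c) I would again invoke \cref{thm:gauge-duality}(c) on \eqref{eqn:prim_equiv}, so the real task is to match the feasibility and relative strict feasibility hypotheses on both sides. The dual side is essentially formal: because \eqref{eqn:persp_dual} is literally the gauge dual of \eqref{eqn:prim_equiv}, because $\tilde{A}\T(y,\alpha,\mu)=(A\T y,\alpha)$ and $\dom\kappa^\circ=\dom f^\pp$, and because $\Fscr_d$ agrees with the lifted gauge dual feasible set, the condition ``$(A\T y,\alpha)\in\ri\dom f^\pp$ and $(y,\alpha,\mu)\in\ri\Fscr_d$'' is precisely the gauge notion of relative strict feasibility; attainment of \eqref{eqn:persp_dual} transfers directly since it \emph{is} the gauge dual. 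The primal side is the main obstacle. Here I must show that relative strict feasibility of \eqref{eqn:target} implies relative strict feasibility of the lifted gauge primal \eqref{eqn:prim_equiv} --- a relative-interior calculation for the perspective of $g$. Concretely, from $x\in\ri\dom f$ I would conclude $(x,1)\in\ri\dom f^\pi$ (the cone generated by $(\ri\dom f)\times\set{1}$ lies in $\ri\dom f^\pi$), and from $b-Ax\in\ri\dom g$ together with $g(b-Ax)<\sigma$, using the standing assumption $\sigma>\inf_u g(u)$, I would conclude that $(b-Ax,1)$ lies in the relative interior of the $\sigma$-sublevel set of $g^\pi$; since feasibility pins $\lambda=1$, these two facts exhibit a point of $\tilde{A}^{-1}(\ri\tilde{\Fscr}_p)\cap\ri\dom f^\pi$. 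Because optimal points of the lifted problem necessarily have $\lambda=1$, \cref{thm: reform} then delivers attainment for \eqref{eqn:target}, and the $\inter$ versions follow by the same argument with $\inter$ replacing $\ri$. The delicate step worth writing out is the sublevel-set relative-interior identity for $g^\pi$, where the homogenization interacts with the constraint level $\sigma$; this parallels the relative-interior bookkeeping deferred in the proof of \cref{prop:strict_feas}.
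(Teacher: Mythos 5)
Your proposal is correct and follows essentially the same route as the paper: parts (a) and (b) are read off from \cref{thm:gauge-duality} via \cref{thm: reform} and \cref{cor:gauge_dual_lift}, and part (c) reduces to matching the (relative) strict feasibility notions of \eqref{eqn:target}/\eqref{eqn:persp_dual} with those of the lifted gauge pair. The ``delicate step'' you flag is exactly what the paper carries out, namely computing $\ri\dom f^\pi=\set{(x,\mu) \mid \mu>0,\ x\in\mu\,\ri\dom f}$ (and likewise for $\rho$ built from $g^\pi$) via \cite[Corollary 6.8.1]{rockafellar} and invoking \cite[Theorem 7.6]{rockafellar} for the sublevel-set relative interiors.
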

\begin{proof}
  Parts (a) and (b) follow immediately from the analogous result in
  \cref{thm:gauge-duality}, together with \cref{thm: reform} and
  \cref{cor:gauge_dual_lift}.

  Next we demonstrate that~\eqref{eqn:target} is relatively strictly
  feasible if and only if~\eqref{eqn:prim_equiv} is relatively
  strictly feasible. 
  By the description of relative interiors of sublevel sets given in 
  \cite[Theorem 7.6]{rockafellar}, ~\eqref{eqn:prim_equiv} is
  relatively strictly feasible if and only if there exists a point
  $(x,1) \in \ri \dom f^\pi$ such that
  \[
    (b-Ax,  0,1) \in \ri \dom \rho \quad \text{and} \quad \rho(b-Ax,0,1 ) = g(b-Ax) <\sigma.
  \]
We now seek a description of $ \ri \dom f^\pi$. We have
\begin{align*}
  \dom f^\pi &= \set{ (x,\mu) | f^\pi(x,\mu) <\infty} \\
             &= \cl \left( \{0\} \cup \set{ (x,\mu) | \mu>0, \, f(x/\mu) <\infty} \right)
               = \cl \cone{ (\dom f \times \{1\} )}.
\end{align*}
By \cite[Corollary 6.8.1]{rockafellar}, the above description yields
\[
  \ri\dom f^\pi = \set{ (x,\mu) | \mu >0, \, x \in \mu \, \ri\dom f }.
\]
Thus $(x,1) \in \ri \dom f^\pi $ if and only if $x \in \ri\dom
f$. Similarly,
\[
  \dom \rho = \set{(y,0,\mu) | (y,\mu) \in \dom g^\pi },
\]
and so
\[
  \ri\dom\rho
  = \set{ (y,0,\mu) | (y,\mu)\in  \ri\dom g^\pi}
  = \set{ (y,0,\mu) \mid \mu>0, y \in \mu\,  \ri\dom g}.
\]
In particular, the condition $(b-Ax, 0, 1) \in \ri\dom\rho$ is
equivalent to $b-Ax \in \ri\dom g$. Thus the conditions for relative
strict feasibility of~\eqref{eqn:prim_equiv} and~\eqref{eqn:target}
are identical.

A similar argument verifies that \eqref{eqn:persp_dual} is
relatively strictly feasible if and only if \eqref{eqn:persp_dual_lift} is
relatively strictly feasible.  Strong duality then follows from
relative interiority, \cref{cor:gauge_dual_lift}, \cref{thm: reform},
and the analogous strong-duality result in
\cref{thm:gauge-duality}.
\end{proof}

\subsection{Optimality conditions}

The following result generalizes \cref{thm:gauge_opt_pert} to include
the perspective-dual pair.

\begin{theorem}[Perspective optimality] \label{thm:peropt}
  Suppose \eqref{eqn:target} is strictly feasible. Then the tuple
  $(x^*,y^*,\alpha^*,\mu^*)$ is perspective primal-dual optimal if and only
  if
    \begin{align*}
      g(b-Ax^*) &= \sigma \quad \text{or}  \quad g^\pp(y^*,\mu^*) = 0 &&\hbox{(primal activity)}
    \\\ip b {y^*} -\sigma g^\pp(y^*, \mu^*) &= 1 - (\alpha^*+\mu^*)
                         &&\hbox{(dual activity)}
    \\\ip{x^*}{A\T y^*} + \alpha^* &= f(x^*)\cdot f^\pp (A\T y^*,\alpha^*)
                         &&\hbox{(objective alignment)}
    \\\ip{b - Ax^*}{y^*} + \mu^* &= g(b-Ax^*)\cdot g^\pp(y^*,\mu^*).
                         &&\hbox{(constraint alignment)}
    \end{align*}
\end{theorem}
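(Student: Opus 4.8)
The plan is to reduce the claim to the gauge optimality conditions already proved in \cref{thm:gauge-optimality-conds}, transported through the gauge lifting of \cref{thm: reform} and \cref{cor:gauge_dual_lift}. Write $\tilde x=(x,\lambda)$ and $\tilde y=(y,\alpha,\mu)$, set $\kappa=f^\pi$, and let $\rho$, $\tilde A$, and $\tilde b=(b,1,1)$ be the lifted data of \eqref{eqn:prim_equiv}, so that $\tilde A\T(y,\alpha,\mu)=(A\T y,\alpha)$ and $\rho^\circ(y,\alpha,\mu)=g^\pp(y,\mu)$. By \cref{thm: reform} and \cref{cor:gauge_dual_lift}, the tuple $(x^*,y^*,\alpha^*,\mu^*)$ is perspective primal-dual optimal exactly when $((x^*,1),(y^*,\alpha^*,\mu^*))$ is primal-dual optimal for the gauge pair \eqref{eqn:prim_equiv}/\eqref{eqn:persp_dual_lift}; in particular the optimal lifted primal point has $\lambda^*=1$. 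Because the standing assumption $\sigma>\inf g\ge 0$ forces $\sigma>0$, none of the $\sigma=0$ conventions intervene.

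First I would verify that the hypotheses transfer. Strict feasibility of \eqref{eqn:target} supplies, through the relative-interior computations in the proof of \cref{thm:per_duality} (those relating $\ri\dom f^\pi$, $\ri\dom\rho$, and the lifted feasible sets), the constraint qualification on \eqref{eqn:prim_equiv} that drives the subdifferential step of \cref{thm:gauge-optimality-conds}, while feasibility of both lifted problems is furnished by the assumed optimality of the tuple. With the hypotheses in hand, \cref{thm:gauge-optimality-conds} returns its four conditions for the lifted pair, which I would translate through the substitutions $\tilde b-\tilde A(x^*,1)=(b-Ax^*,0,1)$, $\rho(b-Ax^*,0,1)=g(b-Ax^*)$, $f^\pi(x^*,1)=f(x^*)$, $\ip{\tilde b}{\tilde y^*}=\ip b{y^*}+\alpha^*+\mu^*$, and $\ip{(x^*,1)}{(A\T y^*,\alpha^*)}=\ip{x^*}{A\T y^*}+\alpha^*$. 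Under these identities the gauge primal-activity, dual-activity, and objective-alignment conditions map verbatim onto the three correspondingly named identities in the statement.

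The one step that is not purely mechanical is the constraint-alignment condition, which I expect to be the main obstacle. \cref{thm:gauge-optimality-conds} delivers it for the lifted pair as $\ip{b-Ax^*}{y^*}+\mu^*=\sigma\, g^\pp(y^*,\mu^*)$, whereas the target identity carries the product $g(b-Ax^*)\cdot g^\pp(y^*,\mu^*)$ on the right. These two right-hand sides agree precisely when the primal-activity condition holds, namely when $g(b-Ax^*)=\sigma$ or $g^\pp(y^*,\mu^*)=0$. Since primal activity is itself one of the four conditions in play, I would close the equivalence by invoking it: it converts the gauge form $\sigma\, g^\pp(y^*,\mu^*)$ into $g(b-Ax^*)\cdot g^\pp(y^*,\mu^*)$ and back, so that the translated gauge conditions and the stated perspective conditions are interchangeable as a block, yielding the asserted biconditional. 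Care is needed only to keep this reconciliation logically clean, since the two forms of constraint alignment are equivalent solely because primal activity is available as part of the condition set.
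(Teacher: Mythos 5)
Your proposal follows the paper's own route exactly: lift \eqref{eqn:target} to the gauge reformulation \eqref{eqn:prim_equiv} via \cref{thm: reform} and \cref{cor:gauge_dual_lift}, then apply the gauge optimality conditions of \cref{thm:gauge-optimality-conds} to the lifted pair and translate the four conditions through the substitutions you list. If anything you are more careful than the paper's two-line proof, since you explicitly reconcile the lifted gauge constraint-alignment identity $\ip{b-Ax^*}{y^*}+\mu^*=\sigma\, g^\pp(y^*,\mu^*)$ with the stated product form $g(b-Ax^*)\cdot g^\pp(y^*,\mu^*)$ by invoking primal activity (which is available as part of the condition block in both directions of the equivalence) --- a step the paper silently elides.
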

\begin{proof}
  By construction, $x^*$ is optimal for \eqref{eqn:target} if and only
  if $(x^*,1)$ is optimal for its gauge
  reformulation~\eqref{eqn:prim_equiv}. Apply
  \cref{thm:gauge_opt_pert} to~\eqref{eqn:prim_equiv} and the
  corresponding gauge dual~\eqref{eqn:persp_dual} to obtain the
  required conditions.
\end{proof}

\noindent The following result mirrors \cref{cor:recovery} for the
perspective-duality case.

\begin{corollary}[Perspective primal-dual recovery] \label{thm:
    perrecovery} Suppose that the primal \eqref{eqn:target} is
  strictly feasible. If $(y^*, \alpha^*, \mu^*)$ is optimal for
  \eqref{eqn:persp_dual}, then for any primal feasible $x \in \Rn$, the following
  conditions are equivalent:
  \begin{enumerate}[(a)]
  \item $x$ is optimal for \eqref{eqn:target};
  \item
    $\ip{x}{A\T y^*} + \alpha^* =
    f(x)\cdot f^\pp(A\T y^*,  \alpha^*)$ and
    $(b-Ax, 1) \in \sigma\partial g^\pp(y^*, \mu^*);$
  \item $A\T y^* \in f^\pp(A\T y^*,\alpha^*)\cdot \partial f(x)$
    and $(b-Ax, 1) \in \sigma\partial g^\pp(y^*, \mu^*).$
  \end{enumerate}
\end{corollary}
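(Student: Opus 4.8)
The plan is to mirror the proof of \cref{cor:recovery}, but with the perspective optimality conditions of \cref{thm:peropt} in place of those of \cref{thm:gauge-optimality-conds}. Throughout I write $c:=f^\pp(A\T y^*,\alpha^*)$. Because $(y^*,\alpha^*,\mu^*)$ is optimal for \eqref{eqn:persp_dual}, which by \cref{cor:gauge_dual_lift} is the gauge dual \eqref{eqn:persp_dual_lift} of the lifted primal, the remark preceding \cref{thm:gauge-optimality-conds} applies: positive homogeneity of the lifted gauge-dual constraint $\ip{(y,\alpha,\mu)}{(b,1,1)}-\sigma\rho^\circ(y,\alpha,\mu)\ge1$ forces it to be active, which after substituting $\rho^\circ(y,\alpha,\mu)=g^\pp(y,\mu)$ is exactly the dual-activity identity $\ip b{y^*}-\sigma g^\pp(y^*,\mu^*)=1-(\alpha^*+\mu^*)$. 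Hence dual activity holds automatically, and to invoke \cref{thm:peropt} it remains only to control primal activity, objective alignment, and constraint alignment.

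First I would reinterpret the shared condition $(b-Ax,1)\in\sigma\sd g^\pp(y^*,\mu^*)$. Since $g^\pp=(g^\pi)^\circ=\delta^\star_{\Uscr_{g^\pi}}$ by \eqref{eq:polar-sup}, scaling gives $\sigma g^\pp=\delta^\star_{\sigma\Uscr_{g^\pi}}$, and the subdifferential of a support function is the associated argmax set. Thus the inclusion holds exactly when $(b-Ax,1)$ maximizes $\ip{(u,\lambda)}{(y^*,\mu^*)}$ over $\set{(u,\lambda)\mid g^\pi(u,\lambda)\le\sigma}$, i.e. when $g^\pi(b-Ax,1)=g(b-Ax)\le\sigma$ together with $\ip{b-Ax}{y^*}+\mu^*=\sigma g^\pp(y^*,\mu^*)$. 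Feeding this through the polar-gauge inequality \eqref{eq:holder-ineq} for the pair $(g^\pi,g^\pp)$, namely $\ip{b-Ax}{y^*}+\mu^*\le g(b-Ax)\,g^\pp(y^*,\mu^*)\le\sigma g^\pp(y^*,\mu^*)$, and using $g(b-Ax)\le\sigma$, all inequalities are forced tight. This tightness yields the constraint-alignment identity and the primal-activity dichotomy $[\,g(b-Ax)=\sigma\text{ or }g^\pp(y^*,\mu^*)=0\,]$. This is the exact analogue of \eqref{eq:max-element-rho}, with the entry $1$ in $(b-Ax,1)$ recovering $g^\pi(\cdot,1)=g$. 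With this identification, (a)$\iff$(b) follows as in \cref{cor:recovery}: the first condition of (b) is objective alignment verbatim, the second condition supplies primal feasibility, primal activity, and constraint alignment, and dual activity is automatic, so \cref{thm:peropt} equates the conjunction with optimality of $x$.

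For (b)$\iff$(c), the two statements share the second condition, so I would argue as in the gauge case but on the gauge $f^\pi$. The objective-alignment equality $\ip{(x,1)}{(A\T y^*,\alpha^*)}=f^\pi(x,1)\cdot c$ is precisely the equality case of the polar-gauge inequality for $(f^\pi,f^\pp)$, hence it holds iff $(x,1)$ minimizes $c\,f^\pi(\cdot)-\ip{\cdot}{(A\T y^*,\alpha^*)}$, i.e. iff $(A\T y^*,\alpha^*)\in c\,\sd f^\pi(x,1)$. Applying \cref{lemma:subdiff} at the positive perspective parameter $\lambda=1$ rewrites this as the existence of $z\in\sd f(x)$ with $A\T y^*=cz$ and $\alpha^*=-c\,f^\star(z)$. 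Projecting onto the first coordinate gives $A\T y^*\in c\,\sd f(x)$, the first condition of (c); this direction (so that (a) or (b) implies (c)) is immediate.

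The main obstacle is the reverse reading: recovering the scalar relation $\alpha^*=-c\,f^\star(A\T y^*/c)$, equivalently the full objective alignment, from the projected inclusion $A\T y^*\in c\,\sd f(x)$ alone. This is where the structure of the perspective polar is essential. Using $c=f^\pp(A\T y^*,\alpha^*)=\gauge{\epi f^\star}(A\T y^*,-\alpha^*)$ from \cref{thm: epi}, one has $c=\inf\set{\lambda>0\mid f^{\star\pi}(A\T y^*,\lambda)\le-\alpha^*}$, and lower semicontinuity and convexity of $\lambda\mapsto f^{\star\pi}(A\T y^*,\lambda)$ pin the value at the infimum, giving $f^{\star\pi}(A\T y^*,c)=-\alpha^*$; that is, $(A\T y^*,-\alpha^*)$ lies on the graph defining $c\,\epi f^\star$. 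This forces $f^\star(A\T y^*/c)=-\alpha^*/c$, which together with $z=A\T y^*/c\in\sd f(x)$ and the Fenchel--Young equality $f^\star(z)=\ip z x-f(x)$ delivers objective alignment, closing (c)$\implies$(a). I expect the delicate part to be exactly this pinning argument in the degenerate regimes $c=0$ and $g^\pp(y^*,\mu^*)=0$, where the relevant gauges vanish; there the level-set equivalence of \cref{thm: starpi} and the $\sigma=0$ conventions are the tools I would use to handle the boundary behaviour rather than the interior argument above.
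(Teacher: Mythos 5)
Your architecture is sound, and two of the three implications are handled correctly. The equivalence (a)$\iff$(b), obtained by feeding \cref{thm:peropt} the reading of $\sigma g^\pp$ as the support function of $\set{(u,\lambda) | g^\pi(u,\lambda)\le\sigma}$ (so that $(b-Ax,1)\in\sigma\sd g^\pp(y^*,\mu^*)$ is exactly primal activity plus constraint alignment), is a valid re-derivation; the paper gets the same equivalence in one line by applying \cref{cor:recovery} to the lifted problem \eqref{eqn:prim_equiv}, but the content is the same. Your direction (b)$\Rightarrow$(c), via \cref{lemma:subdiff} at $(x,1)$, is also correct, and arguing in the lifted space (where the polar-gauge inequality is tight at the origin, so the relevant infimum really is zero) is cleaner than the paper's restriction to the slice $\lambda=1$.

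The genuine gap is your ``pinning'' step in (c)$\Rightarrow$(b). The claim that lower semicontinuity and convexity of $\phi(\lambda):=f^{\star\pi}(A\T y^*,\lambda)$ force $\phi(c)=-\alpha^*$ at $c=f^\pp(A\T y^*,\alpha^*)$ is false as a statement about $f$ and the point $(A\T y^*,\alpha^*)$ alone. Take $f=|\cdot|$ on $\R$, $A\T y^*=1$, $\alpha^*=-1$: then $f^\star=\delta_{[-1,1]}$, so $\phi\equiv+\infty$ on $(0,1)$ and $\phi\equiv 0$ on $[1,\infty)$ (lsc and convex), $c=f^\pp(1,-1)=1$, yet $\phi(c)=0\neq 1=-\alpha^*$; convexity pins nothing when $\phi$ is identically $+\infty$ to the left of $c$, which happens precisely when $A\T y^*/c$ lies on the boundary of $\dom f^\star$ (e.g.\ whenever $f$ is a gauge). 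This is not the regime you flagged ($c=0$ or $g^\pp=0$), and neither \cref{thm: starpi} (which only yields $\phi(c)\le-\alpha^*$) nor the $\sigma=0$ conventions repair it; indeed in this example (c) holds while (b) fails, since $1\in\sd|x|$ for every $x\ge0$ but $x-1=|x|$ has no solution. What rescues the corollary is the hypothesis your argument never uses: optimality of $(y^*,\alpha^*,\mu^*)$ for \eqref{eqn:persp_dual}. If $\phi(c)<-\alpha^*$, set $\alpha':=-\phi(c)>\alpha^*$; then $f^\pp(A\T y^*,\alpha')=c$ still, but the dual constraint now has slack, and scaling $(y^*,\alpha',\mu^*)$ by $t<1$ preserves feasibility while reducing the objective to $tc<c$, contradicting $c=\nu_d\in(0,\infty)$ (strong duality). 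Hence dual optimality forces $\phi(c)=-\alpha^*$, and Fenchel--Young then closes (c)$\Rightarrow$(b). (You did correctly sense a subtlety here: the paper's own assertion that tightness is equivalent to minimizing $h:=f^\pp(A\T y^*,\alpha^*)f(\cdot)-\ip{\cdot}{A\T y^*}-\alpha^*$ quietly relies on the same fact $\inf h=0$; but the fix must come from dual optimality, not from convexity of the perspective.)
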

\begin{proof}
  By construction, $x$ is optimal for \eqref{eqn:target} if and only
  if $(x,1)$ is optimal for its gauge
  reformulation~\eqref{eqn:prim_equiv}. Apply
  \cref{cor:recovery} to \eqref{eqn:prim_equiv} and its gauge
  dual \eqref{eqn:persp_dual} to obtain the equivalence of (a) and
  (b). To show the equivalence of (b) and (c), note that by the
  polar-gauge inequality,
  $\ip{(x,1)}{(A\T y^*, \alpha^*)} \le f^\pi(x,1) \cdot f^\pp(A\T y^*,
  \alpha^*)$ for all $x$, or equivalently,
  \begin{equation*}
    \ip{x}{A\T y^*} + \alpha^*
    \le f(x) \cdot f^\pp(A\T y^*, \alpha^*), \quad\forall x.
\end{equation*}
The inequality is tight for a fixed $x$ if and only if $x$ minimizes
the function \\
$h := f^\pp(A\T y^*, \alpha^*)f(\cdot) - \ip{\cdot}{A\T y^*} -
\alpha^*.$ This in turn is equivalent to $0 \in \partial h(x)$,
or
\[ A\T y^* \in f^\pp(A\T y^*, \alpha^*) \cdot \partial f(x). \] This
shows the equivalence of (b) and (c) and completes the proof.
\end{proof}

\Cref{sect: recovery_ex} illustrates an application of \cref{thm: perrecovery}
for recovering primal optimal solutions from perspective-dual optimal
solutions.

\subsection{Reformulations of the perspective dual}
Two reformulations of the perspective dual \eqref{eqn:persp_dual} may be useful depending on the functions $f$ and $g$ involved in \eqref{eqn:target}.
First, an important simplification of the perspective dual occurs when one or both of these functions are gauges.

\begin{corollary}[Simplification for gauges]
  \label{cor: gaugeobj}
  If $f$ is a gauge, then a triple $(y^*, \alpha^*, \mu^*)$ is optimal
  for~\eqref{eqn:persp_dual} if and only if $\alpha^*\le 0$ and $(y^*, \mu^*)$ is optimal
  for
  \[
    \minimize{y,\alpha} \enspace f^\circ \big( A\T y \big)
    \enspace\st\enspace
    \ip{b}{y} - \sigma g^\pp (y, \mu) \ge 1 - \mu.
  \]
  If, in addition, $g$ is a gauge, then a triple $(y^*, \alpha^*, \mu^*)$ is
  optimal for \eqref{eqn:persp_dual} if and only if $\alpha^*\le 0$, $\mu^*\le 0$, and $y^*$ solves \eqref{eq:gauge-dual}.
\end{corollary}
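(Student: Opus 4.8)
The plan is to specialize the perspective dual \eqref{eqn:persp_dual} by feeding in the closed-form perspective-polar of a gauge supplied by \cref{ex: gauge}. Since $f$ is a closed proper gauge, that example gives $f^\pp(A\T y,\alpha)=f^\circ(A\T y)+\delta_{\R_-}(\alpha)$, so the perspective-dual objective is $+\infty$ unless $\alpha\le 0$, and on the set $\{\alpha\le 0\}$ it equals $f^\circ(A\T y)$ and no longer depends on $\alpha$. Thus any triple attaining a finite objective---in particular any optimal triple---must have $\alpha^*\le 0$, which already recovers the first constraint on $\alpha$ in the statement.

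The heart of the argument is to show that, once $\alpha$ has been driven into $\R_-$, it decouples from the objective and can be eliminated from the constraint. I would observe that $\alpha$ enters the dual constraint $\ip b y-\sigma g^\pp(y,\mu)\ge 1-(\alpha+\mu)$ only additively, so that for fixed $(y,\mu)$ the largest admissible choice $\alpha=0$ makes the constraint easiest to satisfy; taking $\alpha=0$ turns it into precisely the reduced constraint $\ip b y-\sigma g^\pp(y,\mu)\ge 1-\mu$. This shows that the projection $(y,\mu)$ of any perspective-dual--feasible triple with $\alpha\le 0$ is reduced-feasible with the same objective value, and conversely that every reduced-feasible $(y,\mu)$ lifts (via $\alpha=0$) to a perspective-dual--feasible triple with the same objective. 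Matching optimal values and carrying the argument to optimal points then yields the claimed correspondence. The delicate point---and the step I expect to require the most care---is the bookkeeping around $\alpha^*$: because the objective is insensitive to $\alpha$ on $\R_-$, feasibility of a candidate triple is what must be tracked, and for fixed $(y,\mu)$ the admissible set is $\{\alpha\in\R_- : \alpha\ge 1-\mu-\ip b y+\sigma g^\pp(y,\mu)\}$, which is nonempty exactly when the reduced constraint holds. Establishing the optimal-triple correspondence amounts to following this range as $(y,\mu)$ ranges over the reduced-optimal points.

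For the second assertion I would apply \cref{ex: gauge} a second time, now to $g$, obtaining $g^\pp(y,\mu)=g^\circ(y)+\delta_{\R_-}(\mu)$ and hence $\sigma g^\pp(y,\mu)=\sigma g^\circ(y)+\delta_{\R_-}(\mu)$ for $\sigma>0$. Substituting this into the reduced constraint forces $\mu\le 0$ (otherwise its left-hand side is $-\infty$) and, on $\{\mu\le 0\}$, reduces it to $\ip b y-\sigma g^\circ(y)\ge 1-\mu$. Running the same decoupling argument on $\mu$ as was run on $\alpha$---the extremal choice $\mu=0$ making the constraint easiest---collapses it to $\ip b y-\sigma g^\circ(y)\ge 1$, which is exactly the feasibility condition of the gauge dual \eqref{eq:gauge-dual}, while the objective $f^\circ(A\T y)$ is exactly that of \eqref{eq:gauge-dual}. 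This gives the stated equivalence: optimality of $(y^*,\alpha^*,\mu^*)$ for \eqref{eqn:persp_dual} is equivalent to $\alpha^*\le 0$, $\mu^*\le 0$, and $y^*$ solving \eqref{eq:gauge-dual}. The only genuine obstacle, as in the first part, is verifying that the two auxiliary variables can be simultaneously and independently eliminated; since each enters solely through an additive term in the single constraint (and through an indicator in the objective), the eliminations do not interfere, and the reduction is clean.
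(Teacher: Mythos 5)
Your strategy is exactly the paper's: the published proof consists of the single sentence ``Follows from the formulas for $f^\pp$ and $g^\pp$ established in \cref{sec:perspective-polar-examples}'', i.e.\ substitute $f^\pp(A\T y,\alpha)=f^\circ(A\T y)+\delta_{\R_-}(\alpha)$ from \cref{ex: gauge} (and likewise for $g^\pp$) into \eqref{eqn:persp_dual}. Your substitution and decoupling steps are correct, and they yield: every \eqref{eqn:persp_dual}-feasible triple with finite objective has $\alpha\le 0$ and projects to a reduced-feasible pair $(y,\mu)$ with the same objective; every reduced-feasible pair lifts via $\alpha=0$; hence the two optimal values coincide, every optimal triple has $\alpha^*\le 0$ with $(y^*,\mu^*)$ reduced-optimal, and every reduced-optimal pair produces the optimal triple $(y^*,0,\mu^*)$.

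The gap is your closing step, ``carrying the argument to optimal points then yields the claimed correspondence.'' It does not, and the obstruction is precisely the ``delicate point'' you flagged. The backward implication of the corollary asserts that \emph{every} triple with $\alpha^*\le 0$ and $(y^*,\mu^*)$ reduced-optimal is optimal for \eqref{eqn:persp_dual}; but optimality requires feasibility, i.e.\ $\alpha^*\ge 1-\mu^*-\ip{b}{y^*}+\sigma g^\pp(y^*,\mu^*)$. Since $g^\pp$ is a gauge, the reduced constraint function $\ip{b}{y}+\mu-\sigma g^\pp(y,\mu)$ is positively homogeneous in $(y,\mu)$, so at any reduced-optimal point with positive objective value the constraint is active (otherwise scale $(y,\mu)$ down and strictly decrease the objective); your admissible interval for $\alpha^*$ then collapses to $\{0\}$, and every triple with $\alpha^*<0$ is infeasible, hence not optimal. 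Concretely, with $n=m=1$, $A=1$, $b=2$, $f=g=|\cdot|$, $\sigma=1$, the unique solution of \eqref{eqn:persp_dual} is $(1,0,0)$, whereas the corollary (and your claimed conclusion) would also declare the infeasible triple $(1,-1,0)$ optimal; the same remark applies to $\mu^*$ in the second claim. So what your argument genuinely proves is the correct correspondence --- equal optimal values, projection of optimal triples, lifting of reduced-optimal pairs with $\alpha^*=0$ (resp.\ $\mu^*=0$), with $\alpha^*<0$ possible only when the constraint has slack at optimality --- and the residual discrepancy is an imprecision in the statement itself (feasibility of the triple must accompany the conditions on the right-hand side of the ``iff''), not something a sharper version of your argument could repair.
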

\begin{proof}
  Follows from the formulas for $f^\pp$ and $g^\pp$ established
  in \cref{sec:perspective-polar-examples}.
\end{proof}

\cref{thm: starpi} also allows us to express the level sets of $g^\pp$
in terms of its conjugate polar as in the following corollary.

\begin{corollary}
  \label{cor: logper}
  The point $(y^*, \alpha^*, \mu^*)$ is optimal for \eqref{eqn:persp_dual}
  if and only if there exists a scalar $\xi^*$ such that
  $(y^*, \alpha^*, \mu^*, \xi^*)$ is optimal for the problem 
\begin{equation*}
  \begin{array}{ll}
    \minimize{y,\alpha,\mu,\xi} & f^\pp( A\T y, \alpha )
 \\ \st       & \begin{aligned}[t]
                  \ip b y & - \sigma \xi =  1 - (\alpha +\mu)
                , \quad g^{\star \pi} (y, \xi) \le -\mu,\  \xi \ge 0.
                \end{aligned}
  \end{array}
\end{equation*}
\end{corollary}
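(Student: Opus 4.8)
The plan is to exhibit an objective-preserving bijection between the feasible sets of \eqref{eqn:persp_dual} and the lifted problem, using essentially nothing beyond the level-set equivalence of \cref{thm: starpi} applied to $g$. First I would record that the standing assumption $\sigma>\inf_u g(u)\ge 0$ forces $\sigma>0$, so the equality constraint $\ip b y-\sigma\xi=1-(\alpha+\mu)$ determines $\xi$ uniquely from $(y,\alpha,\mu)$, namely $\xi=\sigma^{-1}\!\left(\ip b y-1+\alpha+\mu\right)$. Since the objective $f^\pp(A\T y,\alpha)$ is common to both problems and depends on neither $\mu$ nor $\xi$, it suffices to show that the projection $(y,\alpha,\mu,\xi)\mapsto(y,\alpha,\mu)$ carries the feasible set of the lifted problem bijectively onto that of \eqref{eqn:persp_dual}; optimality then transfers automatically under this correspondence.

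Next I would invoke \cref{thm: starpi} with $f$ replaced by $g$, whose hypotheses (nonnegative, closed, proper, convex, with $0\in\dom g$) are in force throughout this section, reading the equivalence with its three slots instantiated as $(z,\xi,\mu)\rightsquigarrow(y,\mu,\xi)$. This yields
\[
  g^\pp(y,\mu)\le\xi
  \quad\iff\quad
  \bigl[\ \xi\ge 0 \ \text{ and }\ g^{\star\pi}(y,\xi)\le-\mu\ \bigr],
\]
so the two constraints $\xi\ge 0$ and $g^{\star\pi}(y,\xi)\le-\mu$ of the lifted problem are exactly the single constraint $g^\pp(y,\mu)\le\xi$. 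Substituting the determined value of $\xi$ turns $g^\pp(y,\mu)\le\xi$ into $\sigma g^\pp(y,\mu)\le\ip b y-1+\alpha+\mu$, which rearranges to the perspective-dual constraint $\ip b y-\sigma g^\pp(y,\mu)\ge 1-(\alpha+\mu)$. This closes the bijection: each lifted-feasible tuple projects to a perspective-dual-feasible triple, and each perspective-dual-feasible triple lifts uniquely by appending the determined $\xi$, with the objective unchanged, so the optimal points of the two problems coincide under the projection.

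The one point I would watch most carefully—and effectively the only thing the proof really has to check—is the nonnegativity requirement $\xi\ge 0$ appearing in \cref{thm: starpi}. In the lifting direction it is automatic: because $g^\pp\ge 0$, perspective-dual feasibility gives $\xi=\sigma^{-1}\!\left(\ip b y-1+\alpha+\mu\right)\ge g^\pp(y,\mu)\ge 0$. In the projection direction it is imposed directly as a constraint. Verifying that these two usages match, together with confirming that the hypotheses of \cref{thm: starpi} hold for $g$, is all that separates the statement from a direct corollary of the level-set equivalence.
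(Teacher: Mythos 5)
Your proposal is correct and follows essentially the same route as the paper, whose proof is precisely to introduce the variable $\xi := (\ip{b}{y}+\alpha+\mu-1)/\sigma$ and invoke \cref{thm: starpi}; you have simply spelled out the feasibility bijection and the $\xi\ge 0$ check that the paper leaves implicit. (Your parenthetical claim that $0\in\dom g$ is a standing assumption is a slight overstatement, but the paper's own one-line proof relies on the same implicit hypothesis when applying \cref{thm: starpi} to $g$, so this is not a gap you introduced.)
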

\begin{proof}
By introducing the variable $\xi := (\ip b y + \alpha +\mu -1)/\sigma$ in
  \eqref{eqn:persp_dual}, the result follows from
  \cref{thm: starpi}.
\end{proof}

\section{Examples: piecewise linear-quadratic and GLM constraints}
\label{sect: casestudies}

From a computational standpoint, the perspective-dual formulation may
be an attractive alternative to the original primal problem. The
efficiency of this approach requires that the dual constraints are in
some sense more tractable than those of the primal. For example, we
may consider the dual feasible set ``easy'' if it admits an efficient
procedure for projecting onto that set.  In this section, we examine
two special cases that admit tractable dual problems in this
sense. The first case is the family of piecewise linear quadratic
(PLQ) functions, introduced by Rockafellar~ \cite{Rock:88} and
subsequently examined by Rockafellar and
Wets~\cite[p.440]{rockafellarwets}, and Aravkin, Burke, and
Pillonetto~\cite{stablespline}. The second case is when $g$ is a
Bregman divergence arising from a maximum likelihood estimation
problem over a family of exponentially distributed random variables.

For this section only, we will assume for the sake of simplicity
that the objective $f$ is a gauge, so that the perspective dual in
each of this cases simplifies as in \cref{cor: gaugeobj}. The more
general case still applies.

\subsection{PLQ constraints} \label{sect: plq}

The family of PLQ functions is a large class of convex functions that
includes such commonly used penalties as the Huber function, the
Vapnik $\epsilon$-loss, and the hinge loss. The last two are used in
support-vector regression and classification~\cite{stablespline}. PLQ
functions take the form
\begin{equation}  \label{eq: plqdef}
 g(y) = \sup_{u \in U}
 \set{ \ip{u}{By+b} - \tfrac12 \|Lu\|_2^2 },
\quad \Uscr := \set{ u \in \R^\ell | Wu \le w },
\end{equation}
where $g$ is defined by linear operators $L\in\R^{\ell \times \ell}$ and
$W\in\R^{k \times \ell}$, a vector $w \in \R^k$, and an injective affine
transformation $B(\cdot) +b$ from $\R^k$ to $\R^\ell$. We may assume
without loss of generality that $B(\cdot)+b$ is the identity
transformation, since the primal problem \eqref{eqn:target} already
allows for composition of the constraint function $g$ with an affine
transformation. We also assume that $\Uscr$ contains the origin, which
implies that $g$ is nonnegative and thus can be interpreted as a
penalty function. Aravkin, Burke, and Pillonetto~\cite{stablespline}
describe a range of PLQ functions that often appear in applications.

The conjugate representation of  $g$, given by
\begin{equation*}
 g^\star(y) = \delta_\Uscr(y) + \tfrac{1}{2}\|L y \|^2, \label{eq: plq}
\end{equation*}
is useful for deriving its polar perspective $g^\pp$.  In the
following discussion, it is convenient to interpret the quadratic
function $-(1/2\mu)\|Ly\|^2$ as a closed convex function of $\mu\in\R_-$, and
thus when $\mu = 0$, we make the definition
$-(1/2\mu)\|Ly\|^2 = \delta_{\{0\}}(y)$.

\begin{theorem}
  \label{thm: plqpipolar}
  If $g$ is a PLQ function, then
\begin{align*}
  g^\pp(y, \mu)
  &= \delta_{\R_-}(\mu) +
    \max\left\{ \gauge{\Uscr}(y),\, - (1/2\mu)\|Ly\|^2 \right\}
\\&= \delta_{\R_-}(\mu) + \max\left\{  - (1/2\mu) \|Ly\|^2,
    \, \max_{i = 1, \ldots, k} \set{ W_i\T y / w_i } \right\},
\end{align*}
where $W_1^T, \ldots , W_k^T$ are the rows of $W$ that define $\Uscr$ in~\eqref{eq: plqdef}. 
\end{theorem}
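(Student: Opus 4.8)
The plan is to reduce the entire computation to a single Minkowski-gauge evaluation and then exploit the product structure of $\epi g^\star$. First I would check that $g$ satisfies the hypotheses of \cref{thm: epi}: since $0\in\Uscr$, the defining supremum in \eqref{eq: plqdef} is nonnegative and attains $0$ at $u=0$, so $g$ is a nonnegative, closed, proper, convex function with $g(0)=0$, whence $0\in\dom g$. \cref{thm: epi} then gives $g^\pp(y,\mu)=\gauge{\epi g^\star}(y,-\mu)$, so everything reduces to computing the gauge of the epigraph of $g^\star=\delta_\Uscr+\tfrac12\|L\cdot\|^2$.

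Next I would observe that this epigraph splits as an intersection $\epi g^\star=C_1\cap C_2$, where $C_1:=\Uscr\times\R$ and $C_2:=\epi(\tfrac12\|L\cdot\|^2)$, both closed convex sets containing the origin (using $0\in\Uscr$). The elementary fact driving the proof is that, for closed convex sets containing the origin, the gauge of an intersection is the pointwise maximum of the gauges, $\gauge{C_1\cap C_2}=\max\{\gauge{C_1},\gauge{C_2}\}$; this is immediate from $\lambda(C_1\cap C_2)=\lambda C_1\cap\lambda C_2$ together with the equivalence $v\in\lambda C_i\iff\lambda\ge\gauge{C_i}(v)$. I would then evaluate the two gauges separately. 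Since the second coordinate of $C_1$ ranges over all of $\R$, one gets $\gauge{C_1}(y,\tau)=\gauge{\Uscr}(y)$. For $C_2$ the gauge is the closed perspective of $\tfrac12\|L\cdot\|^2$: a direct computation of $\inf\{\lambda>0\mid \tfrac{1}{2\lambda}\|Ly\|^2\le\tau\}$ gives $\|Ly\|^2/(2\tau)$ when $\tau>0$, the value $+\infty$ when $\tau<0$, and $\delta_{\ker L}(y)$ when $\tau=0$. Substituting $\tau=-\mu$ converts these into $-(1/2\mu)\|Ly\|^2$ for $\mu<0$, the convention recorded before the theorem at $\mu=0$ (read as $0$ precisely when $Ly=0$), and the value $+\infty$ for $\mu>0$ that is recorded as the additive indicator $\delta_{\R_-}(\mu)$; inserting $\gauge{C_1}(y,-\mu)=\gauge{\Uscr}(y)$ into the maximum yields the first displayed equality.

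For the second equality I would substitute the explicit formula for the Minkowski gauge of the polyhedron $\Uscr=\{u\mid Wu\le w\}$. Because $0\in\Uscr$ forces $w_i\ge0$, the standard polyhedral computation gives $\gauge{\Uscr}(y)=\max\{0,\max_i W_i\T y/w_i\}$. The apparent gap between this nonnegative gauge and the possibly negative quantity $\max_i W_i\T y/w_i$ is harmless inside the outer maximum: on $\R_-$ the term $-(1/2\mu)\|Ly\|^2$ is always nonnegative, so it already dominates the spurious $0$, giving $\max\{0,\max_i W_i\T y/w_i,-(1/2\mu)\|Ly\|^2\}=\max\{\max_i W_i\T y/w_i,-(1/2\mu)\|Ly\|^2\}$, which is the second form. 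I expect the main obstacle to be the careful treatment of the boundary case $\mu=0$, namely reconciling the recession/closure convention for the quadratic with the gauge value at $\tau=0$, together with verifying that the nonnegative quadratic term genuinely absorbs the sign discrepancy in the polyhedral gauge; the intersection identity and the individual gauge evaluations are otherwise routine.
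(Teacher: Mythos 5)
Your proof is correct and follows essentially the same route as the paper: both apply \cref{thm: epi} to reduce everything to the Minkowski gauge $\gauge{\epi g^\star}(y,-\mu)$ and then evaluate it, your gauge-of-intersection lemma $\gauge{C_1\cap C_2}=\max\{\gauge{C_1},\gauge{C_2}\}$ being a modular repackaging of the paper's direct one-line evaluation of $\inf\{\lambda>0:\ y/\lambda\in\Uscr,\ (1/2\lambda^2)\|Ly\|^2\le-\mu/\lambda\}$, followed by the identical polyhedral computation of $\gauge{\Uscr}$. If anything you are more careful than the paper on the two boundary details: the value at $\mu=0$ (correctly $\delta_{\ker L}(y)$, i.e.\ zero precisely when $Ly=0$, which is how the paper's stated convention $\delta_{\{0\}}(y)$ must be read when $L$ is not injective) and the explicit justification that the nonnegativity of $-(1/2\mu)\|Ly\|^2$ on $\mu\le0$ lets one drop the spurious $0$ from $\gauge{\Uscr}(y)=\max\{0,\max_i W_i\T y/w_i\}$ inside the outer maximum.
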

\begin{proof}
  First observe that when $g$ is PLQ,
  $\epi g^\star=\set{(y,\tau) | y \in \Uscr,\, \frac{1}{2}\|Ly\|^2 \leq \tau}$.
  Apply \cref{thm: epi} and simplify to obtain the chain of equalities
\begin{align*}
  g^\pp(y, \mu)
   = \gauge{\epi g^\star}(y, - \mu)
  &=\inf\set{\lambda > 0 | (y, - \mu)\in \lambda \epi g^\star}
\\&=\inf\set{\lambda > 0 | y/\lambda \in \Uscr, \, (1/2\lambda^2)\|Ly\|^2  \le - \mu/\lambda  }
\\&=\delta_{\R_-}(\mu) + \max\left\{ \gauge{\Uscr}(y),\, - (1/2\mu)\|Ly\|^2 \right\}.
\end{align*}
Because $\Uscr$ is polyhedral, we can make the explicit description
\begin{align*}
 \gauge{\Uscr}(y)
 &= \inf \set{ \lambda > 0 | y \in \lambda \Uscr } \\
 &= \inf \set{ \lambda > 0 | W(y/\lambda) \le w }
  = \max \left\{ 0,  \, \max_{i = 1, \ldots, k} \{ W_i\T y / w_i \} \right\}.
\end{align*}
This follows from considering cases on the signs of the $W_i\T y$, and noting that $w \ge0$ 
because $\Uscr$ contains the origin. Combining the above results, the theorem is proved.
\end{proof}

The next example illustrates how \cref{thm: plqpipolar} can be applied
to compute the perspective-polar transform of the Huber function.

\begin{example}[Huber function]
  \label{ex: huber}
  The Huber function~\cite{Hub}, 
  which is a smooth approximation to the absolute value function, is
  also its Moreau envelope of order $\eta$. Thus it can be stated in
  conjugate form as
  \[
    h_\eta(x)
    = \sup_{u\in[-\eta,\eta]}\Set{ux-(\eta/2)u^2}
    = \sup_{u}\Set{ux-[\delta_{[-\eta,\eta]}(u)+(\eta/2)u^2]},
  \]
  which reveals $h_\eta^\star(y) = \delta_{[-\eta,\eta]}(y)+(\eta/2) y^2$.
  We then apply \cref{thm: epi} to obtain
  \begin{align*}
    h_\eta^\pp(z, \xi)
    &= \gauge{h^\star_\eta}(z,-\xi)
  \\&= \inf\set{\lambda>0|(z,-\xi)\in\lambda\epi h_\eta^\star}
  \\&= \inf\set{\lambda>0||z|/\lambda \le \eta,\ (\eta/2\lambda)z^2\le-\xi}
  \\&= \delta_{\R_-}(\xi) + \max \left\{ |z|/\eta,\ -(\eta/2\xi) z^2 \right\}.
  \end{align*}
\end{example}
Note that this can easily be extended beyond the univariate case to a separable sum by applying the result of Example~\ref{ex: sums}.

We can now write down an explicit formulation of the perspective dual problem~\eqref{eqn:persp_dual} when the primal problem~\eqref{eqn:target} has a
PLQ-constrained feasible region (i.e.,
$g$ is PLQ) and a gauge objective (i.e., $f$ is a closed gauge). The constraint set of~\eqref{eqn:persp_dual} simplifies significantly so
that, for example, a first-order projection method might be applied to
solve the problem. Apply \cref{thm: plqpipolar} and introduce a
scalar variable
$\xi$ to rephrase the dual problem~\eqref{eqn:persp_dual} as
\begin{equation} \label{eq: fpolar_hub}
  \begin{array}{ll}
     \minimize{y,\,\mu,\,\xi} & f^{\circ} \big( A\T y \big)
    \\\st &\begin{aligned}[t]
          \ip b y &+ \mu - \sigma \xi = 1, \ \mu \le 0,\ \xi \ge 0,
         \\       Wy &\le \xi w,\ -(1/2\mu)\|Ly\|^2 \le \xi.
          \end{aligned}
  \end{array}
\end{equation}
We can further simplify the constraint set using the fact that
\begin{equation}\label{eq:projsub}
  \big[\ \|Ly\|^2 \le - 2\mu \xi \text{ and }  \mu \le 0, \, \xi \ge0\ \big]
  \iff
  \left\|
    \begin{bmatrix}
      2Ly  \\ \xi+ 2\mu
    \end{bmatrix}
  \right\|_2 \le \xi -  2 \mu,
\end{equation}
Thus, projecting a point $\ybar$ onto the feasible set of \eqref{eq: fpolar_hub}
is equivalent to solving a second-order cone program (SOCP).
In many important cases, the operator
$L$ is extremely sparse. For example, when
$g$ is a sum of separable Huber functions, we have $L=\sqrt\eta
I$.  Hence in many practical cases, particularly when $m \ll n$ and the dual variables are low-dimensional, this projection problem could be
solved efficiently using SOCP solvers that take advantage of sparsity,
e.g., Gurobi~\cite{gurobi}.

\subsection{Generalized linear models and the Bregman divergence}
\label{sec:glms}

Suppose we are given a data set
$\{(a_i,b_i)\}_{i=1}^m\subseteq\R^{n+1}$, where each vector $a_i$
describes features associated with observations $b_i$.  Assume that
the vector $b$ of observations is distributed according to an
exponential density
$ p(y\mid\theta) = \exp[ \ip\theta y - \phi^\star(\theta) - p_0(y) ],
$ where the conjugate of $\phi:\Rn\to\R$ is the cumulant generating
function of the distribution and $p_0:\Rn\to\R$ serves to normalize
the distribution. We assume that $\phi$ is a closed convex function of
the Legendre type \cite[p.258]{rockafellar}. The maximum likelihood
estimate (MLE) can be obtained as the maximizer of the log-likelihood
function $\log p(y\mid \theta)$.

In applications that impose an \textit{a priori} distribution on the parameters, the goal is to find an approximation to the MLE estimate that penalizes a regularization function $f$ (a surrogate for the prior).
  We assume a linear dependence between the parameters and
feature vectors, and thus set $\theta=Ax$, where the matrix $A$ has
rows $a_i$. A regularized MLE estimate could be obtained by solving
the constrained problem
\[
\minimize{x}\enspace f(x) \enspace\st\enspace d_{\phi^\conj}(Ax;\nabla\phi(b))\le\sigma,
\]
where $d_\phi(v;w) := \phi(v) - \phi(w) - \ip{\nabla\phi(w)}{v-w}$ is
the {\em Bregman divergence} function, and $\sigma$ is a positive
parameter that controls the divergence between the linear model $Ax$
and the first-moment $\nabla\phi(b)$ relative to the density
defined by $\phi$~\cite{banerjee2005clustering}.

We use \cref{cor: logper} to derive the perspective dual, which requires the computation of the conjugate of $g(z):=d_{\phi^\conj}(z;\nabla\phi(b))$:
\begin{align*}
  g^\star(y)
  &= \sup_z\set{\ip z y - d_{\phi^\star}(z;\nabla\phi(b))}
\\&= \sup_z\set{\ip z y - \phi^\star(z)+\phi^\conj(\nabla\phi(b)) +
      \ip{b}{z-\nabla\phi(b)}}
\\&= \phi^\star(\nabla\phi(b))-\ip{b}{\nabla\phi(b)} + \phi(y+b),
\end{align*}
where we simplify the expression using the inverse relationship
between the gradients of $\phi$ and its conjugate. Assume for simplicity that $f$ is a gauge, which is typical when it serves as a regularization function. In that case, the perspective dual reduces to 
\begin{equation} \label{eqn:bregman perdual}
  \begin{array}{ll}
    \minimize{y,\mu,\xi} & f^\circ(A\T y)
    \\\st & \phi^\pi(y+\xi b,\xi)
              \le \xi [\ip{b}{\nabla\phi(b)}-\phi^\star(\nabla\phi(b))-\sigma]-1,\ \xi\ge0;
  \end{array}
\end{equation} cf.~Corollaries~\ref{cor: gaugeobj} and~\ref{cor: logper}.

\begin{example}[Gaussian distribution]
As a first example, consider the case where the $b_i$ are distributed as independent Gaussian variables with unit variance. In this case, $\phi:=\half\|\cdot\|^2$ and the above constraints specialize to
\[ \tfrac1{2\xi}\|y\|^2 + \ip b y\le-(1+\sigma\xi),\ \xi\ge0.
\] This is an example of a PLQ constraint, which falls into the category of problems described in \cref{sect: plq}.
\end{example}

\begin{example}[Poisson distribution] \label{sect: logistic} Consider
  the case where the observations $b_i$ are independent Poisson
  observations, which corresponds to
  $\phi(\theta)=\theta\log\theta-\theta$ and $\phi^\star(y)=e^y$.
  Straightforward calculations show that the perspective dual
  constraints for the Poisson case reduce to
  \[
    \sum_{i=1}^m z_i \log (z_i/\xi) \le \beta \xi + \sum_{i=1}^m z_i -
    (1+\sigma\xi), \quad z= y+ \xi b,\quad \xi\ge0,
  \]
  where $\beta = \sum_{i=1}^m( b_i + b_i \log b_i)$ is a constant. By
  introducing new variables, this can be further simplified to require
  only affine constraints and $m$ relative-entropy constraints.  To
  solve projection subproblems onto a constraint set of this form, we
  note that
\[
  F(x,y,r) = 400 ({-}{\log(x/y)} -\log(\log(x/y) - r/y) -4\log(y))
\]
is a self-concordant barrier %
for the set
$
  \set{ (x,y,r) | y >0, \ y\log(y/x) \le r },
$
which is the epigraph of the relative entropy function; see Nesterov and Nemirovski~\cite[Proposition 5.1.4]{ipmethods} and Boyd and Vandenberghe~\cite[Example 9.8]{boydvan}.  Standard interior methods can therefore be used to project onto the constraint set.
\end{example}

\begin{example}[Bernoulli distribution]
  When the observations $b_i$ are independent Bernoulli observations,
  which corresponds to
  $\phi(\theta)=\theta\log\theta + (1-\theta)\log(1-\theta)$ and
  $\phi^\star(y)=\log(1+e^y)$, the perspective dual constraints
  in~\eqref{eqn:bregman perdual} reduce to
\[
  \sum_{i=1}^m \left[ z_i \log (z_i/\xi) + (\xi - z_i )\log ((\xi - z_i)/\xi)\right] \le \beta \xi - (1+\sigma\xi),
  \quad z= y+ \xi b,  \quad  \xi\ge0,
\]
where $\beta = \sum_{i=1}^m (b_i \log b_i + (1-b_i) \log (1-b_i))$ is a constant. By introducing new variables, this can be rewritten with only affine constraints and $2m$ relative-entropy constraints. Thus the projection subproblems can be solved as in the Poisson case.
\end{example}

\section{Examples: recovering primal solutions} \label{sect: recovery_ex}

Once we have solved the gauge or perspective dual problems, we have
two available approaches for recovering a corresponding primal optimal
solution. If we applied a (Lagrange) primal-dual algorithm (e.g., the
algorithm of Chambolle and Pock~\cite{cp}) to solve the dual, then
\cref{thm:lagrange} gives a direct recipe for constructing a primal
solution from the algorithm's output. On the other hand, if we applied
a primal-only algorithm to solve the dual, we must instead rely on
\cref{cor:recovery} or \cref{thm: perrecovery} to recover a primal
solution. Interestingly, the alignment conditions in these theorems
can provide insight into the structure of the primal optimal solution,
as illustrated by the following examples.

\subsection{Recovery for basis pursuit
  denoising} \label{example1}

Our first example illustrates how \cref{cor:recovery}
can be used to recover primal optimal solutions from dual optimal
solutions for a simple gauge problem. Consider the gauge dual pair
\begin{subequations}
  \begin{alignat}{4}
    &\minimize{x} &\enspace &\|x\|_1 &\enspace&\st &\enspace  \|b-Ax\|_2 &\le \sigma \label{eq:rec_ex}
  \\&\minimize{y} &\enspace &\|A\T y\|_\infty &\enspace&\st&\enspace\ip b y - \sigma \|y\|_2 &\ge 1, \label{eq:rec_ex_dual}
  \end{alignat}
\end{subequations}
which corresponds to the basis pursuit denoising problem.  The 1-norm
in the primal objective encourages sparsity in $x$, while the
constraint enforces a maximum deviation between a forward model $Ax$
and observations $b$.

Let $y^*$ be optimal for the dual problem \eqref{eq:rec_ex_dual}, and set
$z = A\T y^*$. Define the active set
\[
  I(z) = \set{ i | |z_i| = \|z\|_\infty}
\]
as the set of indices of $z$ that achieve the optimal objective value
of the gauge dual. We use \cref{cor:recovery} to determine properties
of a primal solution $x^*$. In particular, the first part of
\cref{cor:recovery}(b) holds if and only if $x_i^*=0$ for all
$i \notin I(z)$, and $\sign (x_i^*) = \sign (z_i)$ for all
$i \in I(z).$ Thus, the maximal-in-modulus elements of $A\T y^*$
determine the support for any primal optimal solution $x^*$.  The
second condition in \cref{cor:recovery}(b) holds if and only if
$b-Ax = \sigma y^*/\|y^*\|_2$.  In order to satisfy this last
condition, we solve the least-squares problem restricted to the
support of the solution:
\[
  \minimize{x}\quad\left\|b - Ax - \sigma \left(y^*/\|y^*\|_2\right)\right\|_2^2
  \quad\st\quad
  x_i = 0\ \ \forall i\notin I(z).
\]
(Note that $y^*\ne0$, otherwise the primal problem is
infeasible.)  The efficiency of this least-squares solve depends on
the number of elements in $I(z)$. For many applications of basis
pursuit denoising, for example, we expect the support to be small
relative to the length of $x$, and in that case, the least-squares
recovery problem is expected to be a relatively inexpensive
subproblem.  We may interpret the role of the dual problem as that of
determining the optimal support of the primal, and the role of the
above least-squares problem as recovering the actual values of the
support.

\subsection{Sparse recovery with Huber misfit} \label{ex: one_huber}

For an example where the constraint is not a gauge function, consider the variant of \eqref{eq:rec_ex}
\begin{equation} \label{eq:sparse-huber}
 \minimize{x} \enspace \|x\|_1
 \enspace \st\enspace h(b-Ax) \le\sigma,
 \textt{with}
 h(r)=\sum_{i=1}^m h_{\eta}(r_i),
\end{equation}
where $h_\eta$ is the Huber function; cf.~\cref{ex: huber}. This problem
corresponds to~\eqref{eqn:target} with $f(x)=\|x\|_1$ and
$g=h$. Suppose that the tuple $(y, \alpha, \mu)$, with $\mu<0$, is
optimal for the perspective dual, and that~\eqref{eqn:target} attains
its optimal value. Because $f$ is a gauge, \cref{cor: gaugeobj}
asserts that $\alpha =0$, and thus \cref{thm: perrecovery}(b) reduces
to the conditions
\begin{subequations}
\begin{align}
  \ip{x}{A\T y} &= f(x)\cdot f^{\circ} (A\T y)   \label{ex: align}
\\(b-Ax, 1)     &\in \sigma\partial h^\pp(y, \mu).
\end{align}
\end{subequations}
As we did for the related example in \cref{example1}, we
use~\eqref{ex: align} to deduce the support of the optimal primal
solution. It follows from \cref{thm: plqpipolar} that because $g$ is
PLQ,
\[
  h^\pp(y, \mu) =
  \delta_{\R_-}(\mu)
  + \max\left(\max_{i =
      1, \ldots, k} \{ W_i\T y/ w_i \},\ {-}(1/2\mu)
      \|Ly\|^2 \right).
\]
In particular, because $h$ is a separable sum of Huber functions,
$W = [I\ {-}I]^T$, $w$ is the constant vector of all ones, and
$L = \sqrt{\eta} I.$ Since $\mu<0$, it follows that
\[
  \partial h^\pp(y, \mu)
  = \partial
  \left(
    \max \left\{
      \|y\|_\infty,  -(\eta/2\mu)\|y\|^2
    \right\}
  \right) (y, \mu).
\]
For the set
$\set{ v_1, \ldots, \, v_{2m+1} } := \left\{ y_1, \ldots, y_m , -y_1,
  \ldots, -y_m, - \frac{\eta}{2\mu }\|y\|^2 \right\},$ let
$J(y, \mu) := \set{ j | |v_j| = \max_{i=1,\ldots,2m+1}|v_i|} $ be the
set of maximizing indices. Then
\[
  \partial h^\pp(y, \mu) = \conv \set{ \nabla v_j | j \in J(y, \mu)},
\]
where $\conv$ denotes the convex hull operation. More concretely, precisely the following terms are contained in the convex hull above:
\begin{itemize}
\item
  $\left( {-}\frac{\eta}{\mu}y,\, \frac{\eta}{2\mu^2}\|y\|_2^2
  \right)$ if ${-}\frac{\eta}{2\mu }\|y\|^2 \ge \|y\|_\infty$;
\item $\left( \sign\, (y_i) \cdot e_i,\, 0 \right)$ if $i \in [m]$
  and $ |y_i| = \|y\|_\infty \ge -\frac{\eta}{2\mu }\|y\|^2
  $,
\end{itemize}
where $e_i$ is the $i$th standard basis vector. Note that if an
optimal solution to \eqref{eqn:target} exists, then \cref{thm:
  perrecovery} tells us that
$ ( {-}(\eta/\mu) y,\, (\eta/2\mu^2) \|y\|^2 ) $ must be included in
this convex hull, otherwise it is impossible to have
$(b-Ax, 1) \in \partial h^\pp(y, \mu).$

In summary, \cref{thm: perrecovery} tells us that to find an
optimal solution $x$ for \eqref{eqn:target}, we need to solve a linear
program to ensure that
$(b-Ax, 1) \in\conv \set{ \nabla v_j | j \in J(y, \mu) }$ subject
to the optimal support of $x$, as determined by~\eqref{ex: align}. In
cases where the size of the support is expected to be small (as might
be expected with a 1-norm objective), this required linear program can
be solved efficiently. 

\section{Numerical experiment: sparse robust regression} \label{sect:numerical}

To illustrate the usefulness of the primal-from-dual recovery
procedure implied by \cref{thm:lagrange}, we continue to examine the
sparse robust regression problem~\eqref{eq:sparse-huber}, considered
by Aravkin et al.~\cite{aravkin2013variational}. The aim is to find a
sparse signal (e.g., a spike train) from measurements contaminated by
outliers.
These experiments have been performed with the following data:
$m=120,$ $n=512,$ $\sigma = 0.2$, $\eta = 1$, and $A$ is a Gaussian
matrix. The true solution $\xbar\in \{-1,0,1\}$ is a spike train which
has been constructed to have 20 nonzero entries, and the true noise
$b-A\xbar$ has been constructed to have 5 outliers.

We compare two approaches for solving
problem~\eqref{eq:sparse-huber}. In both, we use Chambolle and Pock's
(CP) algorithm~\cite{cp}, which is primal-dual (in the sense of
Lagrange duality) and can be adapted to solve both the primal
problem~\eqref{eq:sparse-huber} and its perspective dual~\eqref{eq:
  fpolar_hub}.  Other numerical methods could certainly be applied to
either of these problems, such as Shefi and Teboulle's dual
moving-ball method~\cite{shefi2016rate}. We note that a primal-only
method, for example, applied to \eqref{eq: fpolar_hub}, would require
us to use the methods of \cref{sect: recovery_ex} rather than
\cref{thm:lagrange} for the recovery of a primal solution. %

The CP method applied to problem~\eqref{eq:9} at each iteration $k$ computes
\begin{align*}
  y^{k+1}&:=\prox_{\alpha_y f^\star}
              \big(y^k + \alpha_y A[2x^k - x^{k-1}] \big)
\\x^{k+1}&:=\prox_{\alpha_x g}(x^k - \alpha_x A\T y^{k+1} ),
\end{align*}
where
$\prox_{\alpha f}(x) := \argmin_y \{ f(y) + \frac{1}{2\alpha}\|
x-y\|_2^2\}$.  The positive scalars $\alpha_x$ and $\alpha_y$ are
chosen to satisfy $\alpha_x \alpha_y \|A\|^2 <1$.  Setting
$f=\delta_{h(b-\cdot) \le \sigma}$, and $g = \|\cdot \|_1$
yields the primal problem \eqref{eq:sparse-huber}. In this case, the
proximal operators $\prox_{\alpha f^\star}$ and $\prox_{\alpha g}$ can
be computed using the Moreau identity, i.e.,
\begin{align*}
\prox_{\alpha f^\star} (x)  &= x -  \prox_{(\alpha f^\star)^\star} (x) = x - \alpha \Pi_f ( x/\alpha) \\
\prox_{\alpha g} (y)  &= y - \prox_{(\alpha g)^\star} (y) = y - \Pi_{\alpha \mathbb{B}_\infty} (y/\alpha),
\end{align*}
where $\Pi_f$ is the projection onto the sublevel set in the
definition of $f$ and $\Pi_{\alpha \mathbb{B}_\infty}$ is the
projection onto the infinity-norm ball of radius $\alpha$. We
implement $\Pi_f$ using the \texttt{Convex.jl}~\cite{convex} and
\texttt{Gurobi} \cite{gurobi} software packages.

On the other hand, to apply CP to the perspective dual
problem~\eqref{eq: fpolar_hub}, one instead takes
$f=(\|\cdot\|)^\circ =\|\cdot\|_\infty$ and $g= \delta_\Qscr$, where
$\Qscr$ is the constraint set for \eqref{eq: fpolar_hub}, and take $A$
to be the corresponding adjoint to the operator
in~\eqref{eq:sparse-huber}.  To compute $\prox_{\alpha_y g}$, which is
the projection onto $\Qscr$, we solve the SOCP \eqref{eq:projsub}
using Gurobi. To evaluate $\prox_{\alpha f^\star}$, we again use
the Moreau identity and project onto level sets of $\|\cdot\|_1$.

\begin{figure}[t]
  \centering\small
  \begin{tabular}{@{}c@{}c@{}}
    \includegraphics[width=.49\textwidth]{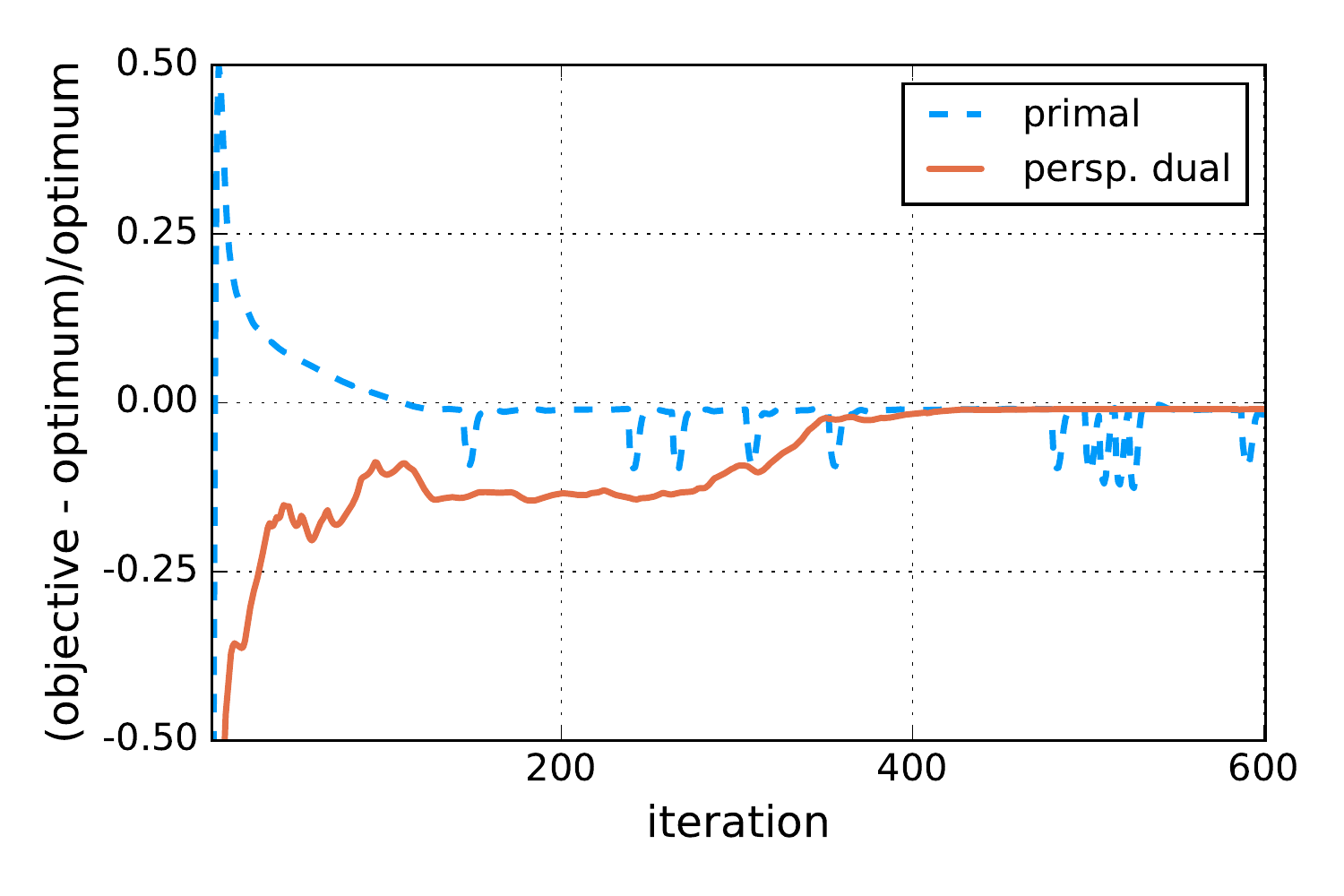}
   &\includegraphics[width=.49\textwidth]{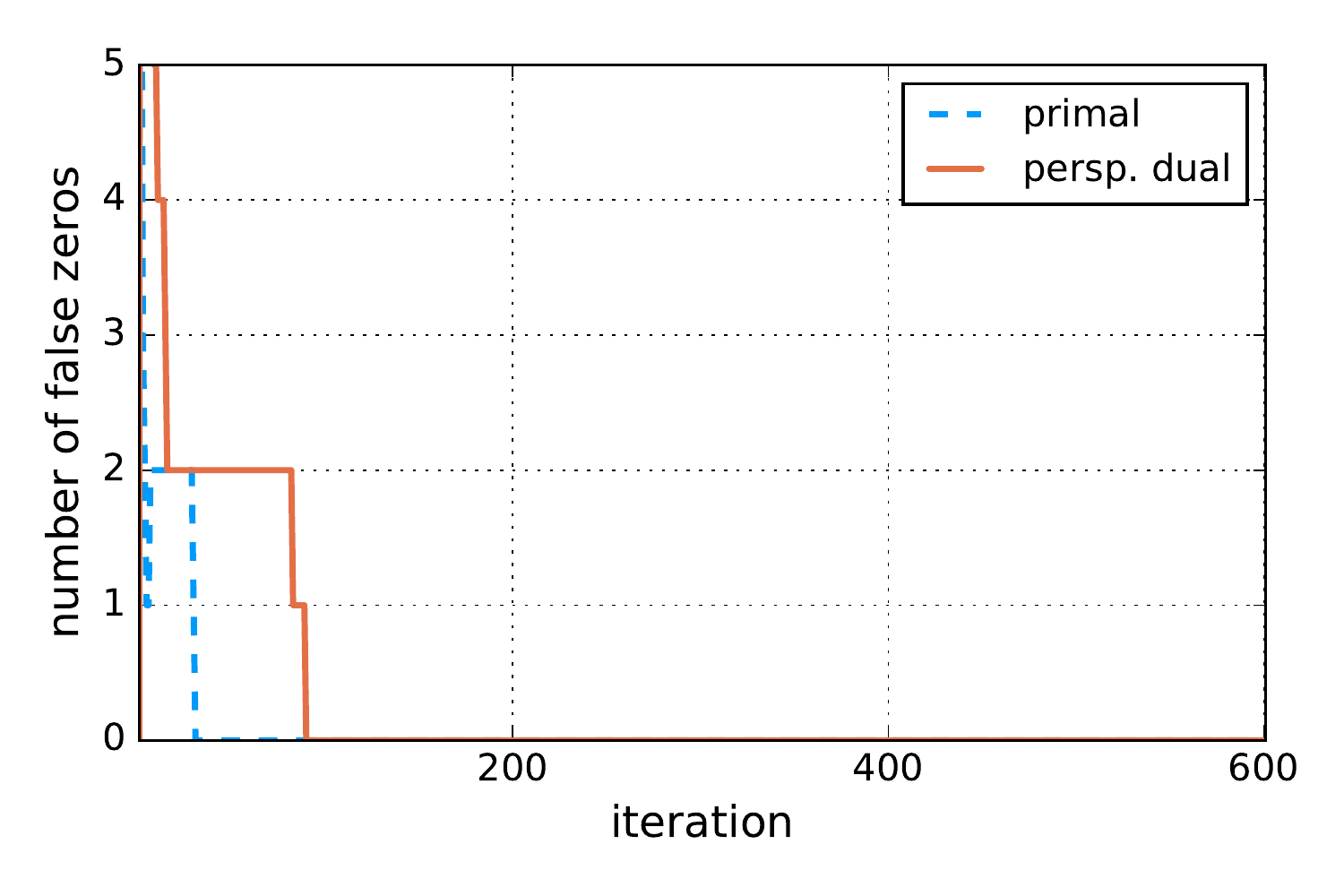}
  \\[-6pt] (a) Normalized objective values & (c) False zeros in iterates
  \\\includegraphics[width=.49\textwidth]{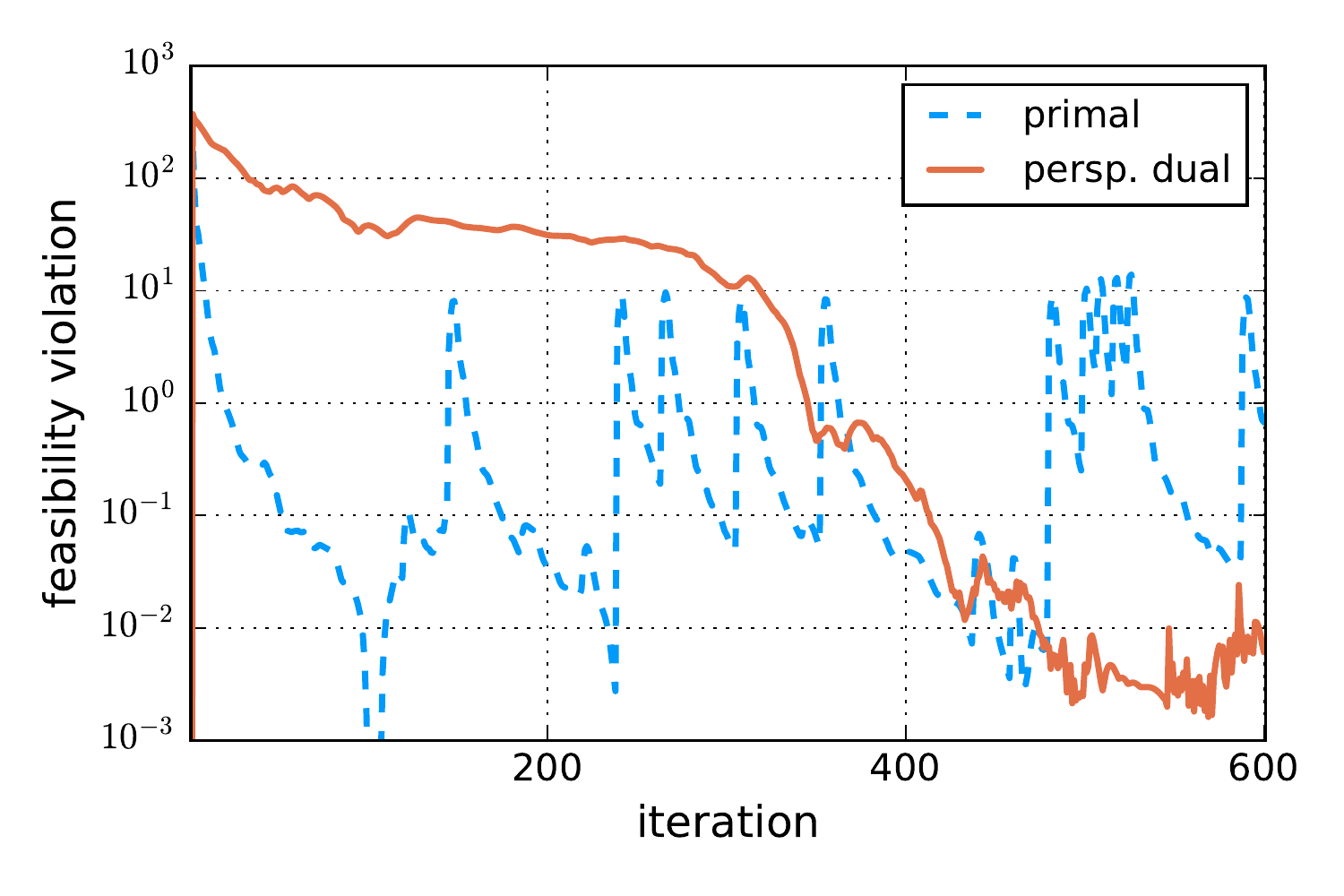}
   &\includegraphics[width=.49\textwidth]{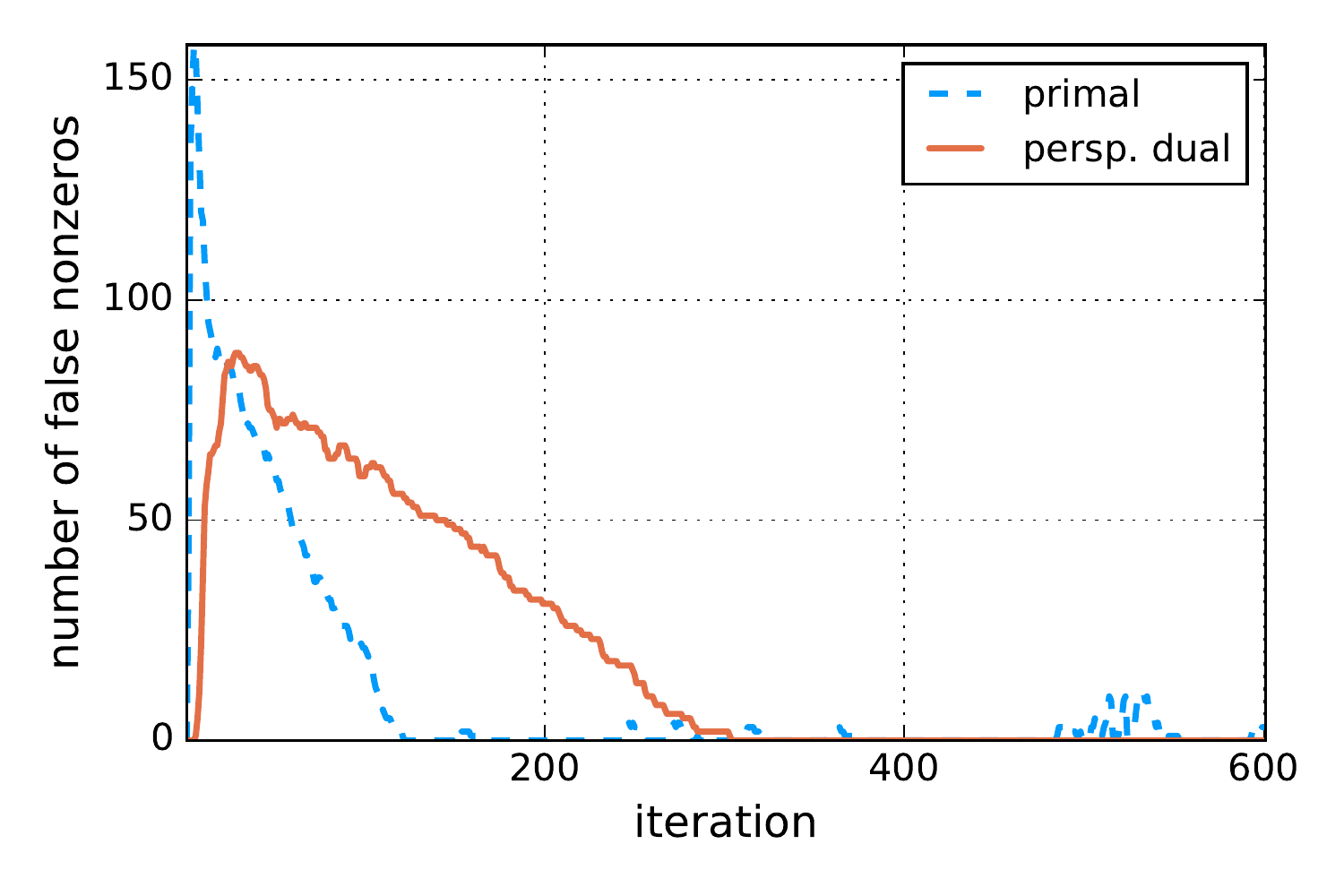}
  \\[-6pt] (b) Feasibility violations for iterates & (d) False nonzeros in iterates
  \end{tabular}
  \caption{\small The CP algorithm applied to sparse robust regression
    (\cref{sect:numerical}). Dashed lines indicate CP applied to the
    primal problem~\eqref{eq:sparse-huber}, and solid lines indicate
    CP applied to its perspective dual~\eqref{eq: fpolar_hub}
    where the primal solution is recovered via the method of
    \cref{thm:lagrange}. Plots show (a) normalized deviation of
    objective value $\|x^k\|_1$ from optimal value $\|\xbar\|_1$;
    (b) infeasibility measure $\max(h(b-Ax^k) - \sigma, 0 )$ for
    iterate
    $x^k$; %
    (c) number false zeros in iterate $x^k$ relative to $\xbar$;
    (d) number of false nonzeros in iterate $x^k$ relative to
    $\xbar$.  }
  \label{fig:CP}
\end{figure}

\cref{fig:CP} compares the outcomes of running CP on the primal and
perspective dual problems.  This experiment exhibited similar behavior
when run 500 times with different realizations of the random data, and
so here we report on a single problem instance.  Note that performing
an iteration of CP on the perspective dual is significantly faster
than performing an iteration of CP on the primal because $\Pi_\Qscr$
can be computed much more efficiently than $\Pi_f$ (see the discussion
in \cref{sect: plq}). This also appears to make convergence of CP on
the perspective dual more stable, as seen in
\cref{fig:CP}(a). \cref{fig:CP}(c)-(d) illustrate the sparsity
patterns of the iterates $x_{k}$ relative to those
$\xbar$. Notably, we recover the correct sparsity patterns using
\cref{thm:lagrange}. The recovery procedure outlined in \cref{ex:
  one_huber} also recovers the correct sparsity pattern, when applied
to the final perspective dual iterate.

\section{Discussion} \label{sect: discussion}

Gauge duality is fascinating in part because it shares many symmetric
properties with Lagrange duality, and yet Freund's 1987 development of
the concept flows from an entirely different principle based
on polarity of the sets that define the gauge functions.  On the other
hand, Lagrange duality proceeds from a perturbation argument, which
yields as one of its hallmarks a sensitivity interpretation of the
dual variables. The discussion in \cref{sect: gauge_sensitivity}
reveals that both duality notions can be derived from
the same Fenchel-Rockafellar perturbation framework. The derivation of
gauge duality using this framework appears to be its first application
to a perturbation that does not lead to Lagrange duality. This new link
between gauge duality and the perturbation framework establishes a
sensitivity interpretation for gauge dual variables, which has not been available until now.

One motivation for this work is to explore alternative formulations of optimization problems that might be computationally advantageous for certain problem classes. The phase-retrieval problem, based on an SDP formulation, was a first application of ideas from gauge duality for developing large-scale solvers \cite{FriedlanderMacedo:2016}. That approach, however, was limited in its flexibility because it required gauge functions. The discussions of \cref{sec:perspective-duality} pave the way to new extensions, such as different models of the measurement process, as described in \cref{sec:glms}.

Another implication of this work is that it establishes the foundation
for exploring a new breed of primal-dual algorithms based on
perspective duality. Our own application of Chambolle and Pock's
primal-dual algorithm \cite{cp} to the perspective-dual problem,
together with a procedure for extracting a primal estimate, is a first
exploratory step towards developing variations of such methods. Future
directions of research include the development of such algorithms,
along with their attendant convergence properties and an understanding
of the classes of problems for which they are practicable.

\section*{Acknowledgments}

We are grateful to Patrick Combettes for pointing us to recent
comprehensive work on properties of the perspective function and its
applications \cite{Combettes2017,Combettes2016}. Our sincere thanks to
two anonymous referees who provided an extensive list of corrections
and suggestions that helped us to arrive at several strengthened
results and to streamline our presentation.

\bibliographystyle{abbrv}
\bibliography{perdualbib}

\appendix

\section{Proof of \eqref{eq:unit and zero}} \label{sec:sigma0_facts}
We prove each fact in succession.
\begin{enumerate}
\item ($\Uscr_{\kappa}^\circ=\Uscr_{\kappa^\circ}$). By definition of the polar gauge and the polar cone, we have $y \in \Uscr_{\kappa^\circ}$ if and only if
  \[\sup\set{\ip x y|\kappa(x)\le 1} \le 1 \iff y \in \Uscr_\kappa^\circ.\]
  
\item ($\Uscr_{\kappa}^\infty=\Hscr_{\kappa}$). Suppose
  $x \in \Hscr_\kappa$. Then for any $u \in \Uscr_\kappa$ and
  $\lambda >0$, by sublinearity of $\kappa$ we have
  $\kappa(u+\lambda x) \le \kappa (u) + \lambda \kappa(x) \le 1 +
  \lambda \cdot 0=1.$ Thus $x \in \Uscr_{\kappa}^\infty$, and
  $\Hscr_\kappa \subseteq \Uscr_\kappa^\infty$. Suppose now that
  $y \in \Uscr_{\kappa}^\infty\setminus \Hscr_\kappa$. Then in
  particular, $\kappa \left( y/\kappa(y) + \lambda y \right) \le 1$
  for all $\lambda >0$. But then by positive homogeneity,
  $\left( 1/\kappa(y) + \lambda \right) \kappa(y) \le 1$, for all
  $\lambda >0$. This is a contradiction since $\kappa(y) >0$, so we
  conclude that $\Hscr_\kappa =\Uscr_\kappa^\infty$.
  
  \item $((\dom \kappa)^\circ=\Hscr_{\kappa^\circ}).$ By positive homogeneity of $\kappa$ and the definition of the polar gauge, $y \in \Hscr_{\kappa^\circ}$ if and only if
  \[  \sup_{\kappa(x) \le 1} \ip x y = 0 \iff \sup_{\kappa(x) <\infty } \ip {x} y = 0
  \iff y \in (\dom \kappa)^\circ.  \]
  
  \item ($\Hscr_{\kappa}^\circ=\cl \dom \kappa^\circ$). Apply the third equality, replacing $\kappa$ by $\kappa^\circ$, and then take polars on both sides. This concludes the proof.
\end{enumerate}
\qed

\section{Proof of \cref{prop:strict_feas}} \label{sec:proof-crefpr}
With no loss in generality, we can assume that $\sigma>0$, because if
$\sigma =0$, we use the convention~\eqref{eq:replacement} and its
implication~\eqref{eq:convention-sigma-zero}.

First suppose that the primal \eqref{eq:gauge-primal} is relatively
strictly feasible.  A point $u$ lies in the domain of $p$ if and only
if the system
\[
  (u,0,0)\in M
\begin{bmatrix}
  w \\\lambda
\end{bmatrix}+(\epi \rho)\times \Uscr_{\kappa},
\textt{where}
M:= \begin{bsmallmatrix}
  A & -b\\
  0 & -\sigma\\
  -I & 0
\end{bsmallmatrix},
\]
is solvable for $(w,\lambda)$. Thus the set $(\dom p)\times \{0\}\times \{0\}$ coincides with 
\begin{equation}\label{eqn:inter_main}
  L\bigcap\left(\range M+(\epi \rho)\times \Uscr_{\kappa}\right),
\end{equation}
where $L:=\set{(a,b,c) | b=0,\,c=0}$ is a linear subspace. We aim to
show $(0,0,0)$ is in the relative interior of \eqref{eqn:inter_main},
which will show $0 \in \ri\dom p$. Use \cite[Lemma
7.3]{rockafellar} and \cite[Theorem 7.6]{rockafellar} to obtain
\begin{align*}
  \ri \epi \rho&=\set{(z,r)\in (\ri\dom \rho)\times \R| \rho(z)<r} \\
  \ri \Uscr_{\kappa}&=\set{x\in \ri\dom \kappa | \kappa(x)<1}.
\end{align*}
From relative strict feasibility of \eqref{eq:gauge-primal}, the fact that $\sigma>0$, and again \cite[Theorem 7.6]{rockafellar}, we deduce
existence of an $x\in \ri\dom \kappa$ with
$b-Ax\in \ri\dom \rho$ and $ \rho(b-Ax)<\sigma.$
Fix a constant $r>\kappa(x)$ and define the pair
$(w,\lambda):=(x/r,1/r)$. Then we immediately have
$(b\lambda -Aw,\sigma\lambda)\in \ri\epi\rho$ and $\kappa(w)<1$.
It follows that the vector
$-M\begin{bsmallmatrix}w\\\lambda\end{bsmallmatrix}$ lies in
$(\ri\epi\rho)\times \ri \Uscr_{\kappa}$. Thus $(0,0,0)$ lies in the
intersection
\begin{equation}\label{eqn:inter_ri}
 L\bigcap\left(\range M+ \left[ (\ri\epi \rho)\times \ri\Uscr_{\kappa} \right] \right).
\end{equation}	
Use \cite[Theorem 6.5, Corollary 6.6.2]{rockafellar} to deduce that
\eqref{eqn:inter_ri} is the relative interior of the
intersection \eqref{eqn:inter_main}. Thus $y=0$ lies in the relative
interior of $\dom p$ as claimed.

Next, suppose that the gauge dual  \eqref{eq:gauge-dual} is strictly feasible.
By definition of $F^\star,$ the tuple $(w ,\lambda)$ lies in the domain of $v_d$ if and only if 
\[
  (w,0, -\lambda) \in \left( \dom \kappa^\circ \times \epi(\sigma \rho^\circ - \ip{b}{\cdot} + 1 ) \right) 
  - \range B,
  \textt{with}
  B:= \begin{bsmallmatrix} A^T \\ I \\ 0 \end{bsmallmatrix}.
\]
Thus $\dom v_d$ is linearly isomorphic to the intersection
\begin{equation}\label{eqn:inter_dual}
  L' \bigcap \left( 
    \left( \dom \kappa^\circ \times \epi(\sigma \rho^\circ - \ip{b}{\cdot} + 1 ) \right) 
    - \range B \right)  ,
\end{equation}
where $L'$ is the linear subspace $L':=\{(a,b,c)\mid b=0\}$.
However, by \cite[Lemma 7.3]{rockafellar}, relative strict feasibility of the dual \eqref{eq:gauge-dual}  amounts to the inclusion
\[
  (0, 0, 0 ) \in  \left( \ri\dom\kappa^\circ \times \ri\epi(\sigma \rho^\circ - \ip{b}{\cdot} + 1 ) \right) -\range B.
\]
Strict feasibility of~\eqref{eq:gauge-dual} implies, via \cite[Corollary 6.5.1, Corollary 6.6.2]{rockafellar}, that  $(0,0,0)$ is in the relative interior of the intersection \eqref{eqn:inter_dual}, and thus 
$ 0 \in\ri\dom v_d$, as claimed.

Finally, the exact same arguments, but with relative interiors replaced by interiors, will prove the claims relating strict feasibility and interiority. This concludes the proof.

\end{document}